\newtheorem{thm}{Theorem}[section]
\newtheorem{prop} [thm]{Proposition}
\newtheorem{cor} [thm]{Corollary}
 \newtheorem{lemma} [thm]{Lemma}
\theoremstyle{definition}
\newtheorem{remark}[thm]{Remark}
\renewcommand\leq{\leqslant} 
\renewcommand\geq{\geqslant}
\newcommand{\Deg}[1]{|#1|}
\DeclareMathOperator{\End}{End}
\DeclareMathOperator{\Char}{char}
\DeclareMathOperator{\sgn}{sgn}
\DeclareMathOperator{\Span}{span}
\DeclareMathOperator{\GL}{GL}
\title{Regular orbits of symmetric and alternating groups}
\author{Joanna B. Fawcett}
\address[J.B.~Fawcett]{ Centre for the Mathematics of Symmetry and Computation, School of Mathematics and Statistics, The University of Western Australia, 35 Stirling Highway, Crawley, WA 6009, Australia.}
\email{joanna.fawcett@uwa.edu.au}
\keywords{Regular orbits; Symmetric group; Alternating group; Primitive groups; Base size}
\author{E. A. O'Brien}
\address[E.A.~O'Brien]{Department of Mathematics, University of Auckland, Private Bag 92019, Auckland, New Zealand}
\email{obrien@math.auckland.ac.nz}
\author{Jan Saxl}
\address[Jan~Saxl]{Department of Pure Mathematics and Mathematical Statistics, Centre for Mathematical Sciences, University of Cambridge, Wilberforce Road, Cambridge CB3 0WB, United Kingdom}
\email{j.saxl@dpmms.cam.ac.uk
}
\thanks{The first author was supported by the Australian Research Council  Discovery Project grant DP130100106, and,  while a Ph.D. student at the University of Cambridge, by the Cambridge Commonwealth Trust  and St John's College, Cambridge. All authors were supported in part by the Marsden Fund of New Zealand via 
grant UOA 105. We thank the referee for helpful 
comments and J\"urgen M\"uller for providing us with a draft of \cite{Mul2011}. Published: \textit{J. Algebra 458 (2016), 21--52. DOI: 10.1016/j.jalgebra.2016.02.018.}
}
\begin{document}
\maketitle

\begin{abstract}
 Given a finite group $G$ and a faithful irreducible $FG$-module $V$ where $F$ has prime order, does $G$ have a regular orbit on  $V$? This problem is equivalent to determining which primitive permutation groups of affine type have a base of size 2. In this paper, we  classify the pairs $(G,V)$ for which $G$ has a regular orbit on $V$  where $G$ is a covering group of a symmetric or alternating group and $V$ is a faithful irreducible $FG$-module such that the order of $F$ is prime and divides the order of $G$.
 \end{abstract}

\section{Introduction}

Let $G$ be a finite group acting faithfully on a set $\Omega$. A \textit{base} $\mathscr{B}$ for $G$ is a non-empty subset of $\Omega$ with the property that only the identity fixes every element of $\mathscr{B}$; if   $\mathscr{B}=\{\omega\}$  for some $\omega\in \Omega$, then the orbit $\{\omega g : g\in G\}$ of $G$ on $\Omega$ is \textit{regular}. Bases have been very useful in permutation group theory in the past half century, both theoretically in bounding the order of a primitive permutation group in terms of its degree  (e.g.,\  \cite{Bab1981}) and  computationally (cf.\ \cite{Ser2003}). Recently, much work has been done on classifying the finite primitive permutation groups of almost simple and diagonal type  with a base of size 2 \cite{BurGurSax2011,BurGurSaxS,BurObrWil2010,Faw2013}. In this paper, we consider this problem for primitive permutation groups of affine type.

A finite permutation group $X$ is \textit{affine} if its socle  is a finite-dimensional $\mathbb{F}_p$-vector space $V$ for some  prime $p$, in which case $X= V:X_0$ and $X_0\leq\GL(V)$, where $X_0$ denotes the stabiliser of the vector $0$ in $X$. Such a group $X$ is primitive precisely when $V$ is an irreducible $\mathbb{F}_pX_0$-module, in which case we say that $X$ is a \textit{primitive permutation group of affine type}. Note that $X$ has a base of size $2$ on $V$ if and only if  $X_0$ has a regular orbit on  $V$. Thus classifying the primitive permutation groups of affine type with a  base of size 2 amounts to determining which finite   groups $G$, primes $p$, and faithful irreducible  $\mathbb{F}_pG$-modules $V$  are such that $G$ has a regular orbit on $V$.

More generally, given a finite group $G$ and a faithful $FG$-module $V$ where $F$ is any field, we can ask whether $G$ has a regular orbit on $V$.  This problem is of independent interest to   representation theorists. Indeed, the classification  of the pairs $(G,V)$  for which $G$ has no regular orbits on $V$ where $G$ is a $p'$-group that normalises a quasisimple group  acting irreducibly on the faithful $\mathbb{F}_pG$-module $V$ \cite{Lie1996,Goo2000,KohPah2001}    
provided an important contribution to the solution of the famous $k(GV)$-problem \cite{SchP2007}, which proved part of a well-known conjecture  of Brauer  concerning defect groups of blocks \cite{Bra1956}.

However, little work has been done on the  regular orbit problem in the case where the characteristic of the field divides the order of the group. Hall, Liebeck and Seitz \cite[Theorem 6]{HalLieSei1992} proved   that  if $G$ is a finite quasisimple group with no regular orbits on a faithful irreducible $FG$-module $V$ where $F$ is a field of characteristic $p$, then either $G$ is of Lie type in characteristic $p$, or $G= A_n$ where $p\leq n$ and $V$ is the fully deleted permutation module (cf.\ \S \ref{s: Rn(1)}), or $(G,V)$ is one of finitely many exceptional pairs. These exceptional pairs are not known in general. Motivated by this result, we classify the pairs $(G,V)$ for which $G$ has a regular orbit on $V$ where $G$ is a   scalar extension of a covering group $H$ of the symmetric group $S_n$ or the alternating group $A_n$ and $V$ is a faithful irreducible $\mathbb{F}_pH$-module such that $p\leq n$. The case where $p>n$ follows from \cite{Goo2000,KohPah2001}.

Let $S$ be a finite group. A finite group $L$ is a   \textit{covering group} or  \textit{cover} of  $S$  if  $L/Z(L)\simeq S$ and $Z(L)\leq L'$. We say that $L$ is a \textit{proper} covering group when $Z(L)\neq 1$. The proper covering groups  of  $S_n$ for $n\geq 5$  are $2.S_n^+$ and $2.S_n^-$, and these groups are isomorphic precisely when $n=6$ \cite{Sch1911,HofHum1992}. The proper covering groups of  $A_n$ are   $2.A_n$ for $n\geq 5$, and $3.A_n$ and $6.A_n$ for $n=6$ or $7$ \cite{Sch1911,HofHum1992}.  The following is our main result.

\begin{thm}
\label{regular total}
Let $H$ be a covering group of  $S_n$ or $A_n$ where $n\geq 5$. Let $G$ be a group for which  $H\leq G\leq H\circ \mathbb{F}_p^*$ where $p$ is a prime and $p\leq n$. Let $V$ be a faithful irreducible $\mathbb{F}_pH$-module, and let $d:=\dim_{\mathbb{F}_p}(V)$. 
\begin{itemize}
\item[(i)] If either $V$ or $V\otimes_{\mathbb{F}_p} \sgn$ is the fully deleted permutation module of $S_n$, then $G$ has a regular orbit on $V$ if and only if $G=A_n$ and $p=n-1$.
\item[(ii)]  If neither $V$ nor $V\otimes_{\mathbb{F}_p} \sgn$ is the fully deleted permutation module of $S_n$, then  $G$ has a regular orbit on $V$ if and only if  $(n,p,G,d)$ is not    listed in Table $\ref{tab: total ex}$.
\end{itemize}

\end{thm}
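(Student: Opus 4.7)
The plan is to use the standard regular orbit criterion: $G$ has a regular orbit on $V$ if and only if $V\neq\bigcup_{1\neq g\in G} C_V(g)$, for which it suffices that
$$\sum_{[g]\neq [1]}|g^G|\,p^{\dim C_V(g)} < p^d,$$
the sum running over non-identity conjugacy classes of $G$. The quantity $\dim C_V(g)$ is read off from the Brauer character of $V$ on $p$-regular classes, together with block-theoretic data on $p$-singular classes. The proof then splits naturally into (i) the fully deleted permutation module, which can be handled uniformly and combinatorially, and (ii) the remaining modules, where an asymptotic plus finite-check argument applies.

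For part (i), the fully deleted permutation module has an explicit description: if $g\in S_n$ has $c_i$ cycles of length $i$, then $\dim C_V(g)=\sum_i c_i(g)-\epsilon$, with $\epsilon\in\{0,1\}$ depending on whether $p\mid n$. Summing over conjugacy classes of $G$ and comparing against $p^d$ (with $d=n-1$ or $n-2$), I would show that the inequality above fails for every $(G,p,n)$ except $G=A_n$ and $p=n-1$; a short combinatorial check, using that the $(n-1)$-cycles lie in $A_n$ precisely when $n$ is odd, rules out other coincidences. Conversely, for $G=A_n$ with $p=n-1$ prime, I would exhibit a single vector with trivial stabiliser to confirm that a regular orbit does exist.

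For part (ii), the scheme has three stages. First, an asymptotic reduction: modular lower bounds on the dimension of a faithful irreducible $\mathbb{F}_pH$-module—e.g.\ James' bounds for ordinary modules of $S_n$ and the exponential lower bounds from the theory of basic spin representations for the proper covers—force $d$ to grow fast enough (while $|C_V(g)|$ is controlled by cycle structure) that the class-sum inequality holds automatically whenever $n$ is larger than some explicit $n_0$. Second, for each $(n,p,H)$ with $n\leq n_0$ I would enumerate the faithful irreducible modules, compute $\dim C_V(g)$ on every class, and test the inequality; in cases where the inequality is inconclusive, one settles the question either by an explicit search for a regular point or by identifying a family of elements whose centralisers cover $V$. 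Third, one collates the genuine exceptions into Table~\ref{tab: total ex}, taking care of the intermediate groups $G$ lying strictly between $H$ and $H\circ\mathbb{F}_p^*$ (which can destroy regularity of an $H$-orbit) and of the isomorphism $2.S_6^+\simeq 2.S_6^-$.

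The main obstacle will be the $p$-singular classes inside the proper spin covers, where Brauer character tables alone do not determine $\dim C_V(g)$ and matrix-level computations or decomposition-number arguments are required; consolidating these calculations across $2.A_n$, $2.S_n^{\pm}$, $3.A_6$, $6.A_6$, $3.A_7$ and $6.A_7$ for $n$ in the delicate middle range—small enough that the asymptotic bounds are not yet decisive, but large enough that naive enumeration is unwieldy—is likely to be the most laborious part of the proof, and is where the computational input from J\"urgen M\"uller acknowledged in the paper will play a decisive role.
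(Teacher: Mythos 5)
The overall architecture you describe for part~(ii) — asymptotic dimension bounds (James for $S_n$, Kleshchev--Tiep for the spin covers) combined with fixed-point counting to force an upper bound on $n$, then a finite enumeration for the surviving $(n,p,H)$ — is indeed the backbone of the paper's proof. However, there are two genuine gaps.

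First, your plan for part~(ii) tacitly assumes that once the asymptotic reduction is in place, all remaining cases live at $n\leq n_0$ and can be enumerated. This fails for the modules $D^{(n-2,2)}$ and $D^{(n-2,1,1)}$ and their associates (the class $R_n(2)\setminus R_n(1)$ in the paper's notation). These have dimension $\Theta(n^2)$, which is strictly below the threshold $\sim n^3/6$ that James' bounds provide for modules outside $R_n(2)$, yet is also too small for the fixed-point counting of \S\ref{s: bounds} to be decisive; and they exist for \emph{every} $n$, so there is no finite $n_0$ beyond which they vanish. The paper handles this infinite family separately (Proposition~\ref{Rn(2) n>11}) by a genuinely different technique: an explicit construction of a vector in the Specht module whose underlying graph on $\{1,\dots,n\}$ has trivial automorphism group, combined with a combinatorial lemma (Lemma~\ref{graph}) guaranteeing a certain $4$-path configuration, which shows the vector is not fixed modulo $S^{\mu\perp}$ by any non-identity element of $S_n\times A$. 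Your proposal contains nothing that plays this role; ``explicit search for a regular point'' as you frame it is an enumeration over small $n$, not a uniform construction valid for all $n\geq 12$.

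Second, in part~(i) your framing is backwards in a way that hides the hard content. Showing that the class-sum inequality \emph{fails} (i.e.\ the sum exceeds $p^d$) does not establish that $G$ has \emph{no} regular orbit — it only means the counting criterion is inconclusive. To prove non-existence of regular orbits on the fully deleted permutation module you must show directly that every vector is fixed by some non-identity element of $G$, and this is far from a footnote: the paper's Proposition~\ref{Rn(1)} does it by separate arguments for $p\leq n-2$ (pigeonhole on repeated coordinates), $p=n$ (a shift-by-one trick producing a $p$-cycle), and $p=n-1$ (a coset-transversal argument adapted from Gluck producing a stabilising element $g\lambda$ with $g\in A_n$). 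Your ``short combinatorial check'' gestures at this but does not sketch it, and the one concrete fact you cite — that $(n-1)$-cycles lie in $A_n$ precisely when $n$ is odd — is wrong: an $m$-cycle is even iff $m$ is odd, so $(n-1)$-cycles are even precisely when $n$ is even. Finally, the contribution acknowledged to M\"uller is not decomposition-number data for spin covers; it is the list of low-dimensional irreducible $FS_n$-modules for $\Char F\in\{2,3\}$ from \cite{Mul2011}, used to identify candidate modules when Brauer tables are unavailable in the range $n\geq 18$.
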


\begin{table}[!h]
\renewcommand{\baselinestretch}{1.1}\selectfont
\centering
\begin{tabular}{ l l l l  }
\hline
$n$ & $p$  & $G$ &   $d$    \\
\hline 
  5 & 2   &   $A_5$, $S_5$ & 4   \\
   & 3  & $A_5\times \mathbb{F}_3^*$, $S_5\times \mathbb{F}_3^*$ & 6  \\
   &  & $2.A_5$, $2.S_{5}^+$, $2.S_{5}^-$ & 4 \\

 & 5 &  $H=2.A_5$ & 2  \\
 & & $2.S_5^-\circ\mathbb{F}_5^*$,  $H=2.S_{5}^+$   & 4  \\
 & & $2.A_5\circ\mathbb{F}_5^*$  & 4  \\

  6 & 2  &  $A_6$, $S_6$  & 4   \\
  
  & & $3.A_6$ & 6 \\

  & 3 &   $H\in\{A_6,S_6\}$  & 6  \\
  
  &  &  $2.A_6$, $2.S_{6}$ & 4 \\

  & 5& $H\in\{A_6,S_6\}$, $G\neq A_6$   & 5    \\

 &   &  $H=2.A_6$ & 4   \\

& & $G\neq H=3.A_6$  & 6 \\

 7 & 2   & $A_7$ & 4   \\

  &  &  $S_7$  & 8, 14  \\
  & & $3.A_7$ & 12 \\
  
   & 3 &  $2.A_7$, $2.S_{7}^+$, $2.S_{7}^-$ & 8  \\
   \hline
\end{tabular}
\quad
\begin{tabular}{ l l l l  }
\hline
$n$ & $p$  & $G$ &   $d$    \\
\hline 
 7  & 5 & $H=3.A_7$ & 6 \\
 
 & 7  &  $H\in\{2.A_7,2.S_7^-\}$ & 4 \\
 & & $H=3.A_7$ & 6 \\

8 & 2   &  $A_8$ & 4, 14   \\ 

& &  $S_8$ & 8, 14    \\

 & 3 &  $2.A_{8}$, $2.S_{8}^-$ & 8 \\

 & 5 & $2.S_8^+\circ\mathbb{F}_5^*$, $H=2.S_{8}^-$ & 8  \\

 9 & 2   & $A_9$ & 8, 20   \\
& &   $S_9$ & 16  \\

 & 3 &  $2.A_{9}$, $2.S_{9}^-$ & 8  \\

 & 5 &  $H=2.A_{9}$ & 8  \\

10 & 2  &  $A_{10}$, $S_{10}$ & 16  \\

 & 3 &  $2.A_{10}$, $2.S_{10}^-$ & 16  \\

 & 5 & $H=2.A_{10}$ & 8  \\

11 & 3 & $2.A_{11}$ & 16 \\
12 & 2   & $S_{12}$ & 32     \\
& 3 & $2.A_{12}$ & 16 \\
\hline
\end{tabular}
\caption{$\mathbb{F}_pG$-modules $V$ on which $G$ has no regular orbits}
\label{tab: total ex}
\renewcommand{\baselinestretch}{1.3}\selectfont
\end{table}

The fully deleted permutation module is a faithful irreducible $\mathbb{F}_pS_n$-module of dimension $n-1$ when $p\nmid n$ and dimension $n-2$ otherwise; its restriction to $A_n$ is always irreducible (cf.\ \S \ref{s: Rn(1)}). The definitions of $\sgn$ and $H\circ \mathbb{F}_p^*$ are given in Section \ref{s: prelim}.

When  $H$ is specified in  Table \ref{tab: total ex}, we mean that  $H\leq G\leq H\circ\mathbb{F}_p^*$ with no restrictions on $G$. Also, for certain $d$ listed in Table \ref{tab: total ex}, there  exist multiple  faithful irreducible $\mathbb{F}_pH$-modules of dimension $d$, none of which admit regular orbits; this includes the case where $A_n\leq H$ and $d$ is  the dimension of the fully deleted permutation module.

Theorem \ref{regular total} follows  from Theorem \ref{regular} and Remark \ref{rem: dim}  in the case where $H$ is $S_n$ or $A_n$, and Theorem \ref{regular double} in the case where $H$ is a proper covering group of $S_n$ or $A_n$.  
Moreover, Theorems \ref{regular} and \ref{regular double} are consequences of more general results concerning regular orbits of central extensions of almost simple groups with socle $A_n$ (cf.\ Lemmas \ref{An reduction} and \ref{2An reduction}).

Observe that when $A_n\leq H$ and $n\geq 7$, representations only occur in Table \ref{tab: total ex}  for $p=2$. Moreover, when $2.A_n\leq H$, every representation listed in Table \ref{tab: total ex} is a  basic spin module (cf.\ \S \ref{s: cover props})  except when $(n,p,G,d)= (5,5,2.A_5\circ\mathbb{F}_5^*,4)$. 

It is well known that the group algebras of $2.S_n^+$ and $2.S_n^-$ are isomorphic over every field containing a primitive fourth root of unity. 
Thus, over such  fields, the representation theory of $2.S_n^+$ and $2.S_n^-$ is essentially the same, and typically, in order to answer a representation theoretical question, it suffices to consider one of the double covers. However, this is not the case for the regular orbit problem. Indeed, even over a splitting field  containing a primitive fourth root of unity, there is an example where only one double cover has a regular orbit; this occurs for $(n,p)=(8,5)$ in Table \ref{tab: total ex}.
Other examples occur for $(n,p)=(5,5)$, in which case $\mathbb{F}_p$ is not a splitting field but  contains a primitive fourth root of unity, and $(n,p)=(7,7)$, $(8,3)$, $(9,3)$ or $(10,3)$, in which case  $\mathbb{F}_p$ does not contain a primitive fourth root of unity.

As an immediate consequence of Theorem \ref{regular total}, we  obtain a result concerning bases  of primitive permutation groups of affine type.

\begin{cor}
Let $X$ be a primitive permutation group of affine type with socle $V\simeq \mathbb{F}_p^d$ where $p$ is a prime. Suppose that  $H\leq X_0\leq H\circ \mathbb{F}_p^*$ where $H$ is a covering group of $S_n$ or $A_n$ for $n\geq 5$, and assume that $p\leq n$.
\begin{itemize}
\item[(i)] If either $V$ or $V\otimes_{\mathbb{F}_p} \sgn$ is the fully deleted permutation module of $S_n$, then $X$ has a base of size $2$ on $V$ if and only if $X_0=A_n$ and $p=n-1$.
\item[(ii)] If neither $V$ nor $V\otimes_{\mathbb{F}_p} \sgn$ is the fully deleted permutation module of $S_n$, then  $X$ has a base of size $2$ on $V$ if and only if  $(n,p,G,d)$ is not    listed in Table $\ref{tab: total ex}$ where $G:=X_0$. 
\end{itemize}
\end{cor}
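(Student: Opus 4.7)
The plan is to derive the corollary directly from Theorem \ref{regular total} by taking $G:=X_0$. The essential fact, already recorded in the introduction, is that for a primitive permutation group $X$ of affine type with socle $V$, one has $X=V{:}X_0$, and a subset $\{u,v\}\subseteq V$ is a base of size $2$ for $X$ if and only if $\{v-u\}$ is a regular orbit of $X_0$ on $V$. This equivalence is immediate from the semidirect product structure: translating by $-u\in V$ sends the pointwise stabiliser of $\{u,v\}$ in $X$ into the stabiliser of $v-u$ in $X_0$. Hence the existence of a base of size $2$ for $X$ is equivalent to the existence of a regular orbit of $X_0$ on $V$.

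To invoke Theorem \ref{regular total} with $G:=X_0$, I would then check that its hypotheses are met. By assumption, $H\leq X_0\leq H\circ\mathbb{F}_p^*$ where $H$ is a covering group of $S_n$ or $A_n$, and $p\leq n$. Primitivity of $X$ on its socle $V$ means $V$ is a faithful irreducible $\mathbb{F}_p X_0$-module; since $H\leq X_0$, the restriction $V{\downarrow}_H$ is automatically faithful. Moreover, because $X_0=H\cdot Z$ where $Z$ consists of scalar matrices, any $H$-submodule of $V$ is automatically $X_0$-invariant, so irreducibility of $V$ as an $\mathbb{F}_p X_0$-module descends to irreducibility as an $\mathbb{F}_p H$-module. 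All hypotheses of Theorem \ref{regular total} are therefore satisfied.

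Parts (i) and (ii) of the corollary then follow term for term from the corresponding parts of Theorem \ref{regular total}, with the dimension $d$ of the socle $V\simeq\mathbb{F}_p^d$ matching $\dim_{\mathbb{F}_p}(V)$ in the theorem, and the excluded cases in Table \ref{tab: total ex} being precisely those for which $X_0$ fails to have a regular orbit. There is no genuine obstacle here; the only point requiring a sentence of care is the passage of irreducibility between $H$ and $X_0$, which is handled by the observation that the connecting scalars lie in $\mathbb{F}_p^*$ and hence stabilise every $H$-submodule.
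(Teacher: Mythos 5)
Your proof is correct and follows exactly the route the paper intends: the paper declares the corollary "an immediate consequence of Theorem \ref{regular total}", relying on the equivalence between a base of size $2$ for $X=V{:}X_0$ and a regular orbit of $X_0$ on $V$, which is stated explicitly in the introduction. Your two checks (faithfulness of $V{\downarrow}H$ and irreducibility descending from $X_0$ to $H$ because the scalars in $\mathbb{F}_p^*$ preserve every $\mathbb{F}_p$-subspace) are exactly the routine verifications implicit in the paper's "immediate".
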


In fact, it can be established by routine computations  using {\sc 
Magma} \cite{Magma} that  for  $(n,p,X_0,d)$  listed in Table~\ref{tab: total ex} with $n\geq 7$, the affine group $X$ has a base of size $3$ with the following exceptions: 
$(7,2,A_7,4)$, 
$(8,2,A_8,4)$, $(8,2,S_8,8)$ and $(9,2,A_9,8)$, in which case $X$ has a  base of minimal size $4$, $5$, $4$ and $4$ respectively. When (i) holds, the minimal base size  of $X$ cannot be constant in general, for $|X|$ is not bounded above by $|V|^c$ for any absolute constant $c$. In either case, $d+1$ is an upper bound on the minimal base size, for any basis of $V$ is a base for $X_0$.

This paper is organised as follows. In \S \ref{s: prelim} we collect some notation, definitions and basic results, and in \S \ref{s: bounds} we determine some  bounds for the dimensions of faithful representations   admitting no regular orbits.  In \S \ref{s: irred Sn} we consider the regular orbits of $S_n$ and $A_n$, and in \S \ref{s: cover props}   the regular orbits of the proper covering groups of $S_n$ and $A_n$. In \S \ref{s: comp} we briefly comment on our computational methods.

\section{Preliminaries}
\label{s: prelim}

 Unless otherwise specified, all groups in this paper  are finite, and all homomorphisms and actions are written on the right.  

Let  $G$ be a finite group. We denote the derived subgroup of $G$ by $G'$, the centre of $G$ by $Z(G)$, the conjugacy class of $g\in G$ by $g^G$, and the generalised Fitting subgroup of $G$ by $F^*(G)$  (cf.\ \cite{Asc2000} for a definition). The group $G$ is \textit{quasisimple} if $G=G'$ and $G/Z(G)$ is simple, and \textit{almost quasisimple} if $G/Z(G)$ is almost simple.

\begin{lemma}
\label{quasi}
Let $G$ be a finite almost quasisimple group. 
\begin{itemize}
\item[(i)] $F^*(G)=F^*(G)'Z(G)$ and $F^*(G)/Z(G)$ is the socle of $G/Z(G)$.
\item[(ii)] $F^*(G)'$ is quasisimple and $Z(F^*(G)')=F^*(G)'\cap Z(G)$.
\end{itemize}
\end{lemma}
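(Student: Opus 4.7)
My plan is to read off both assertions from the structure of almost simple groups. Since $G/Z(G)$ is almost simple, its socle $S$ is a single non-abelian simple group and $G/Z(G)$ has trivial soluble radical. First I would observe that $F(G)=Z(G)$: the quotient $F(G)/Z(G)$ is a normal nilpotent (hence soluble) subgroup of $G/Z(G)$, so it must be trivial. Consequently $F^{*}(G)=F(G)E(G)=Z(G)E(G)$, and since $E(G)$ is perfect while $Z(G)$ is abelian, $F^{*}(G)'=E(G)$, giving $F^{*}(G)=Z(G)F^{*}(G)'$.

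Next I would show that $E(G)\neq 1$. Granted this, the image $E(G)Z(G)/Z(G)$ is a nontrivial normal subgroup of $G/Z(G)$ contained in the socle $S$, so it must equal $S$, yielding $F^{*}(G)/Z(G)=S$ and completing (i). The non-triviality of $E(G)$ follows from the self-centralising property $C_G(F^{*}(G))\leq F^{*}(G)$: if $E(G)=1$ then $F^{*}(G)=Z(G)$ is abelian, so $G=C_G(F^{*}(G))\leq Z(G)$, contradicting the fact that $G/Z(G)$ is almost simple (and in particular nontrivial).

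For (ii), I would use that $E(G)$ is a central product of its components $L_1,\ldots,L_r$, so $E(G)/Z(E(G))\simeq \prod_{i=1}^{r} L_i/Z(L_i)$ is a direct product of non-abelian simple groups. The containment $E(G)\cap Z(G)\leq Z(E(G))$ then induces a surjection
\[
 S \;\simeq\; E(G)/(E(G)\cap Z(G)) \;\twoheadrightarrow\; E(G)/Z(E(G)),
\]
whose source is simple and whose target is nontrivial (otherwise the perfect group $E(G)$ would be abelian, hence trivial, contradicting (i)). This surjection is therefore an isomorphism, forcing $r=1$; so $F^{*}(G)'=E(G)$ is quasisimple, and the two nested normal subgroups $E(G)\cap Z(G)\leq Z(E(G))$ of $E(G)$ share the common finite index $|S|$ in $E(G)$ and must coincide.

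The argument is essentially bookkeeping with the definitions of $F$, $E$ and $F^{*}$; the step that needs the most care is verifying $E(G)\neq 1$, which is the only place where the almost quasisimple hypothesis enters nontrivially, via the self-centralising property of $F^{*}(G)$ together with the non-triviality of $G/Z(G)$.
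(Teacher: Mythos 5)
Your overall strategy is a genuine, self-contained alternative to the paper's proof, which is just a two-reference citation (to Guralnick--Tiep and Aschbacher); your from-first-principles route is more instructive. However, there is one real gap at the crux of part (i). You write that $E(G)Z(G)/Z(G)$ is a nontrivial normal subgroup of $G/Z(G)$ \emph{contained in} the socle $S$, so it equals $S$. The automatic containment runs in the opposite direction: in an almost simple group the socle is the unique \emph{minimal} normal subgroup, so a nontrivial normal subgroup \emph{contains} $S$ (e.g.\ $\bar G$ itself is normal but need not lie in $S$). The inclusion $E(G)Z(G)/Z(G)\leq S$ is exactly the nontrivial half and is left unjustified.

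The gap is fillable in either of two standard ways, and you should supply one of them. First way: each component $L$ of $G$ satisfies $L\cap Z(G)\leq Z(L)$, so $LZ(G)/Z(G)\cong L/(L\cap Z(G))$ is a subnormal quasisimple subgroup of $\bar G := G/Z(G)$, i.e.\ a component of $\bar G$; but an almost simple group has a unique component, namely its socle $S$, so $LZ(G)/Z(G)=S$ for every component $L$, and hence $E(G)Z(G)/Z(G)=S$. Second way: $\bar E := E(G)Z(G)/Z(G)$ is perfect (as a quotient of the perfect group $E(G)$) and normal, so $S\leq \bar E$, and $\bar E/S$ embeds into $\bar G/S\leq\operatorname{Out}(S)$, which is solvable by the Schreier conjecture; a group that is simultaneously perfect and solvable is trivial, so $\bar E=S$. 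Once this step is in place, the rest of your argument --- including the surjection $S\twoheadrightarrow E(G)/Z(E(G))$ in part (ii), which silently uses the corrected $\bar E = S$ --- goes through exactly as you wrote it.
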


\begin{proof}
Follows from \cite[Lemma 2.1]{GurTie2005} and \cite[31.1]{Asc2000}.
\end{proof}

In particular, if $G$ is an almost quasisimple group and the socle of $G/Z(G)$ is $A_n$, then $F^*(G)'$ is a quasisimple group with $F^*(G)'/Z(F^*(G)')\simeq A_n$. Hence $F^*(G)'$ is a covering group of $A_n$, so $F^*(G)'$ is one of $A_n$ or   $2.A_n$ for $n\geq 5$, or $3.A_n$ or $6.A_n$ for $n=6$ or $7$. We will consider the regular orbit problem for almost quasisimple groups $G$ with $F^*(G)'=A_n$  in \S \ref{s: irred Sn} (cf.\ Lemma \ref{An reduction}) and  $F^*(G)'=2.A_n$ in \S \ref{s: cover props} (cf.\ Lemma \ref{2An reduction}).

Let $F$ be a field. We denote the  characteristic of $F$ by $\Char(F)$, the multiplicative group of $F$ by $F^*$, and the group algebra of  $G$ over $F$ by $FG$. All $FG$-modules in this paper are finite-dimensional, and we denote the dimension of an $FG$-module $V$ by $\dim_F(V)$.
We denote the finite field of order $q$ by $\mathbb{F}_q$. 

Let $V$ be an $FG$-module. We say that  $V$ can be \textit{realised over} a subfield $K$ of $F$ if there exists an $F$-basis $\mathscr{B}$ of $V$ such that the matrix of the $F$-endomorphism $g$ of $V$ relative to $\mathscr{B}$ has entries in $K$ for every $g\in G$. If $F$ is a finite field and $V$ has character $\chi$, then $V$ can be realised over $K$ if and only if $K$ contains $\chi(g)$ for all $g\in G$ \cite[Theorem VII.1.17]{BlaHup1981}.

 For an extension field $E$ of $F$ and an $FG$-module $V$, 
we denote the extension of scalars of $V$ to $E$ by  $V\otimes_F E$ (cf.\ \cite{CurRei1962} for a definition). An irreducible $FG$-module $V$ is \textit{absolutely irreducible} if $V\otimes_F E$ is irreducible for every field extension $E$ of $F$. Note that $V$ is absolutely irreducible if and only if  $\End_{FG}(V)=F$ \cite[Theorem 29.13]{CurRei1962}, where $\End_{FG}(V)$ denotes the set of $FG$-endomorphisms of $V$. The field $F$ is a \textit{splitting field} for $G$ if every irreducible $FG$-module is absolutely irreducible. 

Let $F$ be a finite field, $H$ a finite group and  $V$ a faithful $FH$-module, and let $S(H)$ be the set of $h\in H$ for which there exists $\lambda_h\in F^*$ such that $vh=\lambda_h v$ for all $v\in V$. Note that $S(H)\leq Z(H)$. The \textit{central product}  of $H$ and $F^*$, denoted by $H\circ F^*$, is  the quotient $(H\times F^*)/N$ where $N=\{(h,\lambda_h^{-1}):h\in S(H)\}$. The $FH$-module $V$ naturally becomes a faithful $F(H\circ F^*)$-module under the action $vN(h,\lambda):=(\lambda v)h$ for all $v\in V$, $h\in H$ and $\lambda\in F^*$. Now $V$ is an irreducible $F(H\circ F^*)$-module if and only if $V$ is an irreducible $FH$-module. 
Moreover, if $V$ has dimension $d$ and $\rho$ is the corresponding representation of $H$ in $\GL_d(F)$, then $H\circ F^*\simeq \langle H\rho,F^*\rangle=H\rho F^*$.
If $V$ is a faithful irreducible $FH$-module and $|Z(H)|\leq 2$, then $S(H)=Z(H)$,  for a central involution must act as $-1$ on $V$.

\begin{lemma}
 \label{scalar}
 Let $F$ be a field, $G$ a finite group and $V$ an absolutely irreducible $FG$-module. For each $g\in Z(G)$, there exists $\lambda_g\in F^*$ such that $vg=\lambda_gv$ for all $v\in V$. If $V$ is faithful, then the map $g\mapsto \lambda_g$ for all $g\in Z(G)$ is an injective homomorphism from $Z(G)$ to $F^*$.\end{lemma}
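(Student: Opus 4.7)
The plan is to apply Schur's lemma, in the form already cited by the authors: the statement $\End_{FG}(V)=F$ that characterises absolute irreducibility (via \cite[Theorem 29.13]{CurRei1962}).

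First, I would fix $g\in Z(G)$ and regard the action of $g$ on $V$ as an $F$-linear map $\varphi_g:V\to V$, $v\mapsto vg$. The key observation is that because $g$ is central, $\varphi_g$ commutes with the action of every $h\in G$: for all $v\in V$, one has $(vh)g=v(hg)=v(gh)=(vg)h$. Thus $\varphi_g\in \End_{FG}(V)$. Since $V$ is absolutely irreducible, $\End_{FG}(V)=F$ (regarding $F$ as the scalar endomorphisms), so there exists $\lambda_g\in F$ with $\varphi_g=\lambda_g\cdot\mathrm{id}_V$, i.e.\ $vg=\lambda_g v$ for all $v\in V$. Because $g$ has finite order in $G$, the map $\varphi_g$ is invertible, forcing $\lambda_g\in F^*$.

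Next, I would verify that $g\mapsto \lambda_g$ is a group homomorphism $Z(G)\to F^*$. For $g,h\in Z(G)$ and any $v\in V$,
\[
\lambda_{gh}v = v(gh) = (vg)h = \lambda_g(vh)=\lambda_g\lambda_h v,
\]
and since $V\neq 0$ this gives $\lambda_{gh}=\lambda_g\lambda_h$. Finally, assuming $V$ is faithful, injectivity is immediate: if $\lambda_g=1$ then $g$ fixes every vector of $V$, so $g$ lies in the kernel of the representation $G\to \GL(V)$, which is trivial by faithfulness; hence $g=1$.

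There is no real obstacle here: the entire statement is a direct consequence of Schur's lemma in its absolutely-irreducible form together with the centrality of $g$. The only point requiring a moment's thought is the invertibility of $\lambda_g$, which is handled by noting that $g$ has finite order (alternatively, one could just observe that $\lambda_g\neq 0$ since $\varphi_g$ is a bijection of the nonzero space $V$).
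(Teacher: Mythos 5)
Your proof is correct and takes essentially the same route as the paper: both observe that centrality of $g$ makes $v\mapsto vg$ an $FG$-endomorphism, invoke $\End_{FG}(V)=F$ for absolutely irreducible $V$, and then note the homomorphism and injectivity properties. The paper simply compresses the latter into ``the second is straightforward,'' while you spell out the verification.
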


\begin{proof}
Let $g\in Z(G)$. The $F$-endomorphism of $V$ defined by $v\mapsto vg$ for all $v\in V$ lies in $\End_{FG}(V)=F$, so the first claim holds. The second is straightforward.
\end{proof}

When $F$ is a finite field and $V$ is a (faithful) irreducible $FG$-module, we can use the field $k:=\End_{FG}(V)$ to construct a (faithful) absolutely irreducible representation of $G$ with the same $G$-orbits as $V$. Define $k$-scalar multiplication on the additive group $V$ to be evaluation, and let $G$ act in the same way. Now $V$ is a (faithful) absolutely irreducible $kG$-module since $\End_{kG}(V)\subseteq \End_{FG}(V)=k$, and clearly $G$ has a regular orbit on the $FG$-module $V$ if and only if $G$ has a regular orbit on the $kG$-module $V$.

 Let $H$ be a subgroup of $G$, and let $V$ be an $FG$-module. We denote the restricted module of $V$ from $G$ to $H$ by $V\downarrow H$.  We say that $V\downarrow H$ \textit{splits} if it is not irreducible.

\begin{lemma}
\label{H to G}
 Let $G$ be a finite group with subgroup $H$, and let $F$ be a field. If $W$ is an irreducible $FH$-module, then there is an irreducible $FG$-module $V$ for which $W\leq V\downarrow H$.
\end{lemma}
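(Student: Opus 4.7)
The plan is to use induction. I would form the induced module $W^G := W \otimes_{FH} FG$, a right $FG$-module of dimension $[G:H]\dim_F(W)$. Since $FG$ is a free left $FH$-module (a basis being any transversal $T$ of $H$ in $G$), we obtain $W^G \cong \bigoplus_{t\in T} W\otimes t$ as $F$-vector spaces, so the canonical map $\iota\colon W \to (W^G)\downarrow H$ defined by $w\mapsto w\otimes 1$ is an injective $FH$-homomorphism whose image generates $W^G$ over $FG$.

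Because $W^G$ has finite $F$-dimension, it admits a maximal $FG$-submodule $M$; set $V := W^G/M$, so $V$ is an irreducible $FG$-module, and let $\pi\colon W^G\to V$ be the projection. Since $W\otimes 1$ generates $W^G$ as an $FG$-module and $M$ is proper, $W\otimes 1$ is not contained in $M$, so the composition $\pi\iota\colon W\to V\downarrow H$ is a non-zero $FH$-homomorphism. Irreducibility of $W$ then forces $\ker(\pi\iota)=0$, which embeds $W$ as an $FH$-submodule of $V\downarrow H$, as required.

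There is no substantial obstacle here; this is a textbook consequence of the adjointness of induction and restriction (Frobenius reciprocity), together with the existence of maximal submodules in finite-dimensional modules. The only thing to watch is the side conventions, since the paper's actions are on the right, but this only dictates whether one works with $W\otimes_{FH}FG$ or $FG\otimes_{FH}W$.
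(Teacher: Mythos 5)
Your proof is correct and is essentially the same approach as the paper's, which simply cites Frobenius--Nakayama reciprocity; you have unpacked that citation into the standard explicit argument via the induced module $W^G$, its generation by $W\otimes 1$, and passage to an irreducible quotient.
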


\begin{proof}
Follows from Frobenius-Nakayama reciprocity  \cite[Theorem VII.4.5]{BlaHup1981}. 
\end{proof}

Let $N$ be a normal subgroup of $G$, $V$ an  $FG$-module and $W$ an irreducible $FN$-submodule of $V$. For $g\in G$, the normality of $N$ implies that the \textit{conjugate} $Wg$ is  an irreducible $FN$-submodule of $V$. The following is well known from Clifford theory.

\begin{lemma}
\label{index 2} 
Let $G$ be a finite group, $N\unlhd G$ and $F$ a field. Let $V$ be an irreducible $FG$-module and $W$ an irreducible $FN$-submodule of $V$. Then $V\downarrow N$ is a direct sum of conjugates of $W$, and if $[G:N]=2$, then $V\downarrow N=W$ or $W\oplus Wg$ for all $g\in G\setminus N$. 
\end{lemma}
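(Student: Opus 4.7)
The plan is to use the standard Clifford-theoretic observation that the sum of all $G$-conjugates of $W$ is an $FG$-submodule of $V$. First I would verify that for each $g\in G$ the conjugate $Wg$ is an irreducible $FN$-submodule of $V$: the normality of $N$ gives $(Wg)N = W(gNg^{-1})g = WNg = Wg$, and the irreducibility is transferred via the $F$-isomorphism $w\mapsto wg$, which intertwines the $N$-action on $W$ with the twisted $N$-action on $Wg$. Hence $U:=\sum_{g\in G} Wg$ is $N$-invariant; it is clearly $G$-invariant by construction, so $U$ is a nonzero $FG$-submodule of $V$ and equals $V$ by irreducibility.

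Next I would extract a direct sum decomposition. Choosing transversal representatives $g_1,\dots,g_k$ so that the conjugates $Wg_1,\dots,Wg_k$ are distinct, I would take a maximal subcollection whose sum is direct; by the irreducibility of each $Wg_i$ as an $FN$-module, any further $Wg_j$ intersects the partial sum in either $0$ or all of $Wg_j$, so maximality together with $\sum Wg_i = V$ forces $V\downarrow N = \bigoplus_i Wg_i$ (in particular the missing conjugates are already summands of the chosen ones). This proves the first assertion.

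For the index-$2$ assertion, fix $g\in G\setminus N$, so $G = N\cup Ng$ and $g^2\in N$. Since $W$ is $N$-invariant, the set of $G$-conjugates of $W$ is exactly $\{W,Wg\}$. Using the first part (or arguing directly), $W+Wg$ is an $FN$-submodule, and it is $G$-invariant because $(W+Wg)g = Wg + Wg^2 = Wg + W$; irreducibility of $V$ then gives $V = W+Wg$. Finally, the sum is either non-direct or direct: if $W\cap Wg\neq 0$, then by irreducibility of both $FN$-submodules the intersection equals each, so $W = Wg$ and $V=W$; otherwise $V = W\oplus Wg$.

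There is really no serious obstacle here; the only mild care needed is ensuring that the chosen conjugates $Wg_i$ in the direct sum are indexed in a way that exhausts all distinct conjugates of $W$ under the $G$-action, so that the resulting decomposition is manifestly a sum of conjugates of $W$ and not just of a fixed representative.
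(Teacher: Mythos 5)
Your proof is correct and follows essentially the same Clifford-theoretic route as the paper: observe that $\sum_{g\in G}Wg$ is a nonzero $FG$-submodule, hence all of $V$, and then specialise to index two. You simply spell out details the paper leaves implicit (verifying $Wg$ is $FN$-irreducible and extracting the direct sum via a maximal independent subfamily), which the paper treats as standard.
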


\begin{proof}
Since $\sum_{g\in G}Wg$ is an $FG$-submodule of $V$, it is equal to $V$, and so the first claim holds. If $[G:N]=2$ and $g\in G\setminus N$, then $V=W+ Wg$, so the second claim holds.
\end{proof}

Let $N$ be an index 2 subgroup of $G$. The \textit{sign module}, denoted by $\sgn$, is the one-dimensional $FG$-module for which $g\in N$ acts as $1$ and $g\in G\setminus N$ acts as $-1$. For an  $FG$-module $V$, the \textit{associate} of $V$ is the $FG$-module $V\otimes_F \sgn$ where $(v\otimes \lambda)g:=(vg)\otimes (\lambda g)$ for all $v\in V$, $\lambda\in \sgn$ and $g\in G$.

\section{Bounds for dimensions of non-regular representations}
\label{s: bounds}

In this section, we determine bounds for the dimensions of faithful irreducible representations of almost quasisimple groups that  admit no regular orbits. These  are obtained  using the standard technique  of counting fixed points.

 Let $G$ be a finite  group,   $F$  a field, and  $V$  an $FG$-module. We define   $C_V(g):=\{v\in V:vg=v\}$ for all $g\in G$.    For $X\subseteq G$, we define $[V,X]:=\Span\{v-vg:v\in V,g\in X\}$, and when $X=\{g\}$, we write $[V,g]$. Note that  $\dim_F(V)=\dim_F(C_V(g))+\dim_F([V,g])$ for all $g\in G$. For $v\in V$,  we denote the stabiliser of $v$ in $G$ by $C_G(v)$.

The following is a simple but crucial result.

\begin{lemma}
\label{dag}
 Let $G$ be a finite group and $F$  a field. Let $V$ be a faithful $FG$-module. If $G$ has no regular orbits on $V$, then
$V=\bigcup_{ g\in G\setminus\{ 1\}} C_V(g).$
\end{lemma}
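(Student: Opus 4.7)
The plan is to prove this by a direct orbit-stabiliser argument. The statement asserts that if $G$ admits no regular orbit on $V$, then every vector of $V$ is fixed by some non-identity element of $G$.

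First I would fix an arbitrary $v \in V$ and consider its stabiliser $C_G(v)$. By the orbit-stabiliser theorem, $|v^G| = [G : C_G(v)]$. The condition that $v^G$ is a regular $G$-orbit is equivalent to $C_G(v) = 1$. Since by hypothesis $G$ has no regular orbit on $V$, no vector can have trivial stabiliser, so $C_G(v) \neq 1$.

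Next I would choose any non-identity element $g \in C_G(v)$. By definition of the stabiliser, $vg = v$, which is precisely the condition that $v \in C_V(g)$. Since $g \neq 1$, this places $v$ in the union $\bigcup_{g \in G \setminus \{1\}} C_V(g)$. As $v$ was arbitrary, we obtain $V \subseteq \bigcup_{g \in G \setminus \{1\}} C_V(g)$, and the reverse inclusion is immediate because each $C_V(g)$ is a subset of $V$. The faithfulness of $V$ is not needed for the argument (it will be needed when this lemma is subsequently used to bound dimensions via fixed-point counting).

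There is no real obstacle here: the lemma is essentially a reformulation of the definition of a regular orbit in terms of fixed-point sets. The only thing to be careful about is to note that the equality of sets follows from a two-sided inclusion, with the non-trivial direction being the one supplied by the orbit-stabiliser reasoning above.
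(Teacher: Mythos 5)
Your proof is correct and is essentially the same as the paper's, which simply states the contrapositive: if $v \notin C_V(g)$ for all $1 \neq g \in G$, then $v$ lies in a regular orbit. Your observation that faithfulness is not actually used here is accurate.
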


\begin{proof}
If  $v\in V$ and $v\notin C_V(g)$ for all $1\neq g\in G$, then $v$ lies in  a regular orbit of $G$.
\end{proof}

 Lemma \ref{dag} implies that if $V$ is a faithful $FG$-module where $G$ is finite and $F$ is infinite, then $G$ has a regular orbit on  $V$, for no vector space over an infinite field is a finite union of proper subspaces.
Moreover, Lemma \ref{dag}  gives us a  bound for the size of $V$  that is easily computed using {\sc Magma}. To see this, we need the following useful observation about fixed points of central elements.

\begin{lemma}
\label{center dag} 
Let $G$ be a finite group and $F$  a  field. Let $V$ be a faithful irreducible $FG$-module. If $1\neq g\in  Z(G)$, then $C_V(g)=0$.
\end{lemma}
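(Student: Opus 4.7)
The plan is to observe that, because $g$ lies in $Z(G)$, the action of $g$ on $V$ is not merely $F$-linear but in fact $FG$-linear. Concretely, for any $h\in G$ and $v\in V$, one has $(vh)g=v(hg)=v(gh)=(vg)h$, so the map $\varphi\colon V\to V$ defined by $v\mapsto v-vg$ is an $FG$-endomorphism of $V$. Hence its kernel $\ker\varphi=C_V(g)$ is an $FG$-submodule of $V$.

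Since $V$ is irreducible as an $FG$-module, we must have $C_V(g)=0$ or $C_V(g)=V$. The plan is then to rule out the second possibility using faithfulness. If $C_V(g)=V$, then $vg=v$ for every $v\in V$, so $g$ acts as the identity on $V$. But $V$ is faithful, so this forces $g=1$, contradicting the hypothesis that $g\neq 1$. Therefore $C_V(g)=0$, as required.

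No step here is really an obstacle: the proof is essentially a one-line application of Schur's principle that endomorphisms commuting with the $G$-action preserve submodules, combined with the definition of faithfulness. The only thing to be a little careful about is that we do not need $F$ to be algebraically closed or $V$ to be absolutely irreducible — ordinary irreducibility is enough to conclude that the $FG$-submodule $C_V(g)$ is either zero or all of $V$.
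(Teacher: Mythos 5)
Your proof is correct and follows the same route as the paper's (very terse) argument: the paper simply states that $C_V(g)$ is a proper $FG$-submodule of $V$, and you have filled in exactly the two details that makes this work — centrality of $g$ gives the $FG$-submodule structure, and faithfulness together with $g\neq 1$ gives properness.
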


\begin{proof}
This follows from the fact that $C_V(g)$ is a proper $FG$-submodule of $V$.
\end{proof}

Now we provide the bound mentioned above. 

\begin{lemma}
\label{strong bound}
Let $G$ be a finite group and $F$  a finite field. Let $V$ be a faithful irreducible $FG$-module. If $G$ has no regular orbits on  $V$, then
$$
 |V|\leq \sum_{g\in X}|g^{G}||C_V(g)|,
$$
where  $X$ is a set of representatives for the conjugacy classes of non-central elements of prime order in $G$. 
\end{lemma}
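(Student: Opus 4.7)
The plan is to apply Lemma \ref{dag} and then reduce the union on the right-hand side to a sum over representatives of non-central prime-order conjugacy classes.

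First I would invoke Lemma \ref{dag} to write $V=\bigcup_{g\in G\setminus\{1\}} C_V(g)$. Next I would cut this union down. For any $g\in G$ of order $n>1$, pick a prime $p$ dividing $n$; then $g^{n/p}$ has order exactly $p$, and since anything fixed by $g$ is fixed by every power of $g$, we have $C_V(g)\subseteq C_V(g^{n/p})$. Hence
\[
V=\bigcup_{g\in Y} C_V(g),
\]
where $Y$ denotes the set of elements of prime order in $G$. Now I would remove central elements using Lemma \ref{center dag}: if $1\neq g\in Z(G)$, then $C_V(g)=0$ (here is where faithful irreducibility is used), so such $g$ contribute nothing to the union and
\[
V=\bigcup_{g\in Y\setminus Z(G)} C_V(g).
\]

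Next I would group the elements of $Y\setminus Z(G)$ by conjugacy class. Conjugation by $h\in G$ induces an $F$-linear isomorphism from $C_V(g)$ to $C_V(h^{-1}gh)$ (via $v\mapsto vh$), so $|C_V(g')|=|C_V(g)|$ for every $g'\in g^G$. Writing $Y\setminus Z(G)=\bigsqcup_{g\in X} g^G$ for a set $X$ of conjugacy class representatives, I obtain
\[
V=\bigcup_{g\in X}\;\bigcup_{g'\in g^G} C_V(g'),
\]
and then the elementary union bound $|A\cup B|\leq |A|+|B|$ (applied over the finite index set) yields
\[
|V|\leq \sum_{g\in X}\sum_{g'\in g^G}|C_V(g')|=\sum_{g\in X}|g^G|\,|C_V(g)|,
\]
which is the desired inequality.

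There is no serious obstacle: the argument is pure bookkeeping, combining Lemmas \ref{dag} and \ref{center dag} with the closure of $C_V(\cdot)$ under taking powers and under conjugation. The only minor subtlety is noting that $C_V(g)\subseteq C_V(g^k)$ (so replacing a general non-identity element by a prime-order power only enlarges the fixed space, keeping the union inclusion valid), and that the union bound is not tight because the subspaces all share the zero vector — but tightness is not needed.
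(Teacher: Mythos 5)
Your proof is correct and follows essentially the same route as the paper: both reduce the cover $V=\bigcup_{g\neq 1}C_V(g)$ from Lemma \ref{dag} to non-central prime-order elements (you by passing to $g^{n/p}$ and using $C_V(g)\subseteq C_V(g^{n/p})$, the paper by picking a prime-order element of $C_G(v)$ directly), invoke Lemma \ref{center dag} to discard central elements, and finish with conjugation-invariance of $|C_V(\cdot)|$ and the union bound.
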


\begin{proof}
Let $0\neq v\in V$. Now $v\in C_V(g_0)$ for some $g_0\in G\setminus Z(G)$ by Lemmas \ref{dag} and \ref{center dag}. This implies that $C_G(v)$ is a non-trivial group, so there exists  $h_0\in C_G(v)$ of prime order and $v\in C_V(h_0)$. In particular, $h_0\notin Z(G)$ by Lemma \ref{center dag}. Since  $|C_V(g)|=|C_V(h^{-1}gh)|$  for all $g,h\in G$,  the result follows.
\end{proof}

 Let $G$ be an  almost quasisimple group  where $G/Z(G)$ has socle $T$, and let  $g\in G\setminus Z(G)$. Now $\langle T,Z(G)g\rangle$ is generated by  the $T$-conjugates of $Z(G)g$, so we may define $r(g)$ to be  the minimal number of $T$-conjugates of $Z(G)g$   generating  $\langle T,Z(G)g\rangle$.

The following result appears in various incarnations in the literature; the version given here, which is essentially  \cite[Lemma 3.2]{GurTie2005}, is the one most suited to our purposes;  see also \cite[Lemma 2]{Lie1996} and the proof of \cite[Theorem 6]{HalLieSei1992}.

\begin{lemma}
\label{r(g)}
Let $G$ be an almost quasisimple group and $F$ a field. Let $V$ be a faithful  irreducible $FG$-module. Then
$$\dim_{F}( C_V(g) )\leq \dim_{F}(V)\left(1-\frac{1}{r(g)}\right )$$
for all $g\in G\setminus Z(G)$.
\end{lemma}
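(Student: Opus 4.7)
Let $r:=r(g)$. By Lemma~\ref{quasi}, the preimage of the socle $T$ of $G/Z(G)$ in $G$ is exactly $F^*(G)$, so I may choose $t_1,\dots,t_r\in F^*(G)$ whose cosets $Z(G)g^{t_1},\dots,Z(G)g^{t_r}$ generate $\langle T,Z(G)g\rangle$ in $G/Z(G)$. Write $g_i:=g^{t_i}$; each $g_i$ is $G$-conjugate to $g$, so the identity $\dim_F(V)=\dim_F(C_V(g_i))+\dim_F([V,g_i])$ recorded just before Lemma~\ref{dag} gives $\dim_F([V,g_i])=\dim_F(V)-\dim_F(C_V(g))$.

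Define $U:=\sum_{i=1}^{r}[V,g_i]$, so that $\dim_F(U)\le r\bigl(\dim_F(V)-\dim_F(C_V(g))\bigr)$. The lemma will follow by a direct rearrangement once the equality $U=V$ is established. To this end, put $H:=\langle g_1,\dots,g_r\rangle$ and $M:=HZ(G)=\langle F^*(G),g\rangle$. A standard telescoping argument $v-v(h_1h_2)=(v-vh_1)+(vh_1-(vh_1)h_2)$, together with $[V,g^{-1}]=[V,g]$, identifies $U$ with the full $H$-commutator subspace $[V,H]:=\Span\{v-vh:v\in V,\,h\in H\}$, which is manifestly $H$-invariant. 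Since $Z(G)$ commutes with each $g_i$, each $[V,g_i]$ is $Z(G)$-invariant; hence $U$ is $M$-invariant, and by construction $M/Z(G)=\langle T,Z(G)g\rangle\supseteq T$.

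It remains to show $U=V$. Note first that $U\ne 0$, for $V$ is faithful and $g\ne 1$ force $[V,g]\ne 0$. When $V\downarrow M$ is irreducible, this at once yields $U=V$. In general, one applies Clifford's theorem to the normal subgroup $F^*(G)\unlhd G$ and decomposes $V\downarrow F^*(G)$ into $G$-conjugate isotypic components; the element $g$ permutes these with cyclic orbit structure, so each $\langle g\rangle$-orbit spans an $M$-invariant summand of $V$, reducing the claim to the $M$-irreducible case, and additivity of $\dim_F(C_V(g))$ and $\dim_F([V,g])$ over the summands then gives the global bound. The main obstacle is this Clifford-theoretic reduction: one must confirm that the parameter $r(g)$ is preserved when passing to an $M$-irreducible constituent $W$, so that the per-constituent bound $\dim_F(C_W(g))\le\dim_F(W)(1-1/r)$ holds uniformly with the same $r=r(g)$. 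In the style of \cite{GurTie2005,Lie1996}, this is accomplished by carefully analysing the $g$-action on the isotypic components of $V\downarrow F^*(G)$ and exploiting the cyclicity of $M/F^*(G)$.
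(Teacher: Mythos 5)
Your setup matches the paper's exactly: you pick representatives $g_1,\dots,g_r$ conjugate to $g$ in $G$ whose cosets generate $\langle T,Z(G)g\rangle$, form $U=\sum_i[V,g_i]$, and reduce the lemma to showing $U=V$. Up to this point everything is correct, including the observation that $U=[V,\langle g_1,\dots,g_r\rangle]$ and that $\dim_F U\leq r\bigl(\dim_F V-\dim_F C_V(g)\bigr)$.

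The gap is in the step $U=V$, and you identify it yourself. The Clifford-theoretic reduction you sketch is incomplete: on an $M$-irreducible constituent $W$ you would need a per-constituent inequality $\dim C_W(g)\leq\dim W(1-1/r)$ with \emph{the same} $r=r(g)$, but the constituents need not be faithful $FM$-modules, the $r$-parameter is defined relative to $G/Z(G)$ rather than $M/Z(M)$, and $W$ need not be $M$-irreducible just because it is a single $\langle g\rangle$-orbit of isotypic components. None of this is established, and the argument as written does not close.

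The paper avoids Clifford theory entirely. Writing $H:=\langle g_1,\dots,g_r\rangle$, one has $N=F^*(G)\leq HZ(G)$ by construction (the cosets $\overline g_i$ generate a subgroup of $G/Z(G)$ containing $T$). Since $Z(G)$ is central, $(HZ(G))'=H'\leq H$, so $N'\leq H$ and hence $[V,N']\subseteq[V,H]=U$. Now $N'$ is normal in $G$ and non-trivial (because $N'Z(G)/Z(G)=T'=T$, as $T$ is non-abelian simple), so faithfulness of $V$ gives $[V,N']\neq 0$; and $[V,N']$ is an $FG$-submodule because $N'\unlhd G$. Irreducibility of $V$ as an $FG$-module then forces $[V,N']=V$, hence $U=V$, and the inequality follows. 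The point you missed is that it suffices to find a non-zero $G$-invariant (not merely $M$-invariant) subspace inside $U$; the derived subgroup $N'$ supplies one with no case analysis.
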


\begin{proof}
Let $\overline{g}$  denote the coset $Z(G)g$ for $g\in G$, and let $\overline{N}:=N/Z(G)$ where $N/Z(G)$ is the socle of $G/Z(G)$.  Fix $g\in G\setminus Z(G)$. Let $\overline{g}_1,\overline{g}_2,\ldots,\overline{g}_r$ be conjugates of $\overline{g}=\overline{g}_1$ that generate  $\langle \overline{N},\overline{g}\rangle$ where $r:=r(g)$ and the representatives $g_2,\ldots,g_r$ are chosen to be conjugates of $g$ in $G$. By this choice,  $|C_V(g)|=|C_V(g_i)|$ for $1\leq i\leq r$.  Let $W:=[V,\langle g_1,\ldots,g_r\rangle]=\Span\{[V,g_i]:1\leq i\leq r\}$. Now $W$ is spanned by $r(g)\dim_F([V,g])$ elements. Observe that $[V,N']$ is a subspace of $[V,\langle g_1,\ldots,g_r\rangle]$, for $N\leq \langle g_1,\ldots,g_r\rangle Z(G)$, and so $N'\leq \langle g_1,\ldots,g_r\rangle$.  But $1\neq N'\unlhd G$ and $V$ is faithful, so 
$[V,N']$ is a non-zero $FG$-submodule of $V$. Thus $[V,N']=[V,\langle g_1,\ldots,g_r\rangle]=V$, so $r(g)\dim_F([V,g])\geq \dim_F(V)$. Now $\dim_F(V)-\dim_F(C_V(g))\geq \dim_F(V)/r(g)$, and the result follows.
\end{proof}

The next result is a natural generalisation of part of the proof of \cite[Theorem 6]{HalLieSei1992}.

\begin{lemma}
\label{general bound}
 Let $G$ be an almost quasisimple group and $V$ a faithful irreducible $\mathbb{F}_qG$-module where $q$ is a power of a prime. If $G$ has no regular orbits on  $V$, then
$$\dim_{\mathbb{F}_q}(V)\leq r(G)\log_q{|G|},$$
where $r(G):=\max{\{r(g):g\in G\setminus Z(G)\}}$.
\end{lemma}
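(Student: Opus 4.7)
The plan is to combine the three preceding tools in the section: the covering equation from Lemma \ref{dag}, the vanishing of centralisers of central elements from Lemma \ref{center dag}, and the uniform fixed-space bound from Lemma \ref{r(g)}. Let $d:=\dim_{\mathbb{F}_q}(V)$, so $|V|=q^d$. Since $G$ has no regular orbits, Lemma \ref{dag} gives
\[
V=\bigcup_{g\in G\setminus\{1\}}C_V(g),
\]
and Lemma \ref{center dag} tells us that the terms with $g\in Z(G)\setminus\{1\}$ contribute only $\{0\}$, so in fact
\[
V=\{0\}\cup\bigcup_{g\in G\setminus Z(G)}C_V(g).
\]
The first step is to invoke this union and pass to cardinalities via the elementary union bound $|V|\leq\sum_{g\in G\setminus Z(G)}|C_V(g)|$.

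Next I would apply Lemma \ref{r(g)} term by term. For each $g\in G\setminus Z(G)$ we have $\dim_{\mathbb{F}_q}C_V(g)\leq d(1-1/r(g))\leq d(1-1/r(G))$, and hence $|C_V(g)|\leq q^{d(1-1/r(G))}$. Combined with the union bound and the trivial estimate $|G\setminus Z(G)|\leq|G|$, this produces
\[
q^d\;\leq\;|G|\cdot q^{d(1-1/r(G))}.
\]

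The final step is purely arithmetic: rearranging gives $q^{d/r(G)}\leq|G|$, and taking logarithms to base $q$ yields $d\leq r(G)\log_q|G|$, which is the desired inequality. There is no real obstacle here; the only point requiring a moment of care is the separation of central and non-central contributions to the union in Lemma \ref{dag}, which is essential because Lemma \ref{r(g)} is only stated for $g\notin Z(G)$ — handled cleanly by Lemma \ref{center dag}.
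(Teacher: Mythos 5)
Your proof is correct and follows essentially the same route as the paper's: apply Lemma \ref{dag} and Lemma \ref{center dag} to reduce to a union over non-central elements, bound each $|C_V(g)|$ via Lemma \ref{r(g)} with $r(g)\leq r(G)$, use the trivial estimate $|G\setminus Z(G)|\leq|G|$, and rearrange.
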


\begin{proof}
 By Lemmas \ref{dag}, \ref{center dag} and  \ref{r(g)},  $$q^{\dim_{\mathbb{F}_q}(V)}=|V|\leq \sum_{g\in G\setminus Z(G)}q^{\dim_{\mathbb{F}_q}(C_V(g))}\leq |G| q^{\dim_{\mathbb{F}_q}(V)\left(1-\frac{1}{r(G)}\right)}.$$ Now $q^{\dim_{\mathbb{F}_q}(V)/r(G)}\leq |G|$, and so $\dim_{\mathbb{F}_q}(V)\leq r(G)\log_q{|G|}$.
\end{proof}

Now we give some more specific bounds for the case where the socle of $G/Z(G)$ is $A_n$.

\begin{lemma}
\label{bounds}
Let $G$ be an almost quasisimple group, and suppose that the socle of $G/Z(G)$ is $A_n$ where $n\geq 5$. Let $V$ be a  faithful irreducible $\mathbb{F}_qG$-module where $q$ is a power of a prime.  If $G$ has no regular orbits on $V$, then 
\begin{equation}
\tag{1}
\label{rough bound}
\dim_{\mathbb{F}_q}(V)\leq (n-1) \log_q{|G|},
\end{equation}
and if $n\geq 7$, then
\begin{equation}
\tag{2}
\label{better bound}
\dim_{\mathbb{F}_q}(V)  \leq \max{\{(n-1) \log_q{(n(n-1)|Z(G)|)}, \tfrac{n}{2}\log_q{(2n!|Z(G)|)}\}}.\\
\end{equation}
If  $n\geq 7$ and $|Z(G)|\leq n$, then
\begin{equation}
\tag{3}
\label{even better bound}
\dim_{\mathbb{F}_q}(V) \leq \tfrac{n}{2}\log_q{(2n!|Z(G)|)}.
\end{equation}
\end{lemma}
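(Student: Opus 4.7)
My plan is to prove (1) directly from Lemma \ref{general bound}, and then derive (2) by refining the counting argument behind Lemma \ref{general bound} via Lemma \ref{strong bound}, splitting the prime-order conjugacy classes by how cheaply they generate the socle modulo $Z(G)$; finally, (3) should reduce to a numerical comparison of the two terms in (2).

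For (1), it suffices to verify that $r(G)\leq n-1$, i.e.\ that for every $g\in G\setminus Z(G)$ the coset $Z(G)g$ together with $T:=F^*(G)'/Z(G)\cong A_n$ is generated by $n-1$ $T$-conjugates of $Z(G)g$. The image of $Z(G)g$ in $G/Z(G)$ is non-trivial, and $\langle T,Z(G)g\rangle$ is either $A_n$ or $S_n$; by a classical generation result (Piccard/Bertram), for $n\geq 5$ both $A_n$ and $S_n$ are generated by $n-1$ conjugates of any non-identity element, which gives the required bound on $r(g)$. Then (1) is immediate from Lemma \ref{general bound}.

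For (2) and (3), I will use Lemma \ref{strong bound} combined with Lemma \ref{r(g)} to obtain
\[
 q^d\;\leq\;\sum_{g\in X}|g^{G}|\,q^{\,d(1-1/r(g))},
\]
where $d:=\dim_{\mathbb{F}_q}(V)$ and $X$ is a set of representatives for the non-central prime-order classes. The plan is to partition $X=X_1\sqcup X_2$, where $X_1$ collects those $g$ whose image in $G/Z(G)$ is a transposition of $S_n$ (so that $X_1$ contributes at most one $G$-class, yielding $\sum_{X_1}|g^G|\leq n(n-1)|Z(G)|$ while $r(g)\leq n-1$ by (1)), and $X_2$ collects the rest. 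For $g\in X_2$ with $n\geq 7$, I will argue that $r(g)\leq n/2$; combined with the trivial bound $\sum_{X_2}|g^G|\leq |G|\leq 2\,n!\,|Z(G)|$, this gives
\[
 q^d\;\leq\;n(n-1)|Z(G)|\,q^{\,d(1-1/(n-1))}\;+\;2\,n!\,|Z(G)|\,q^{\,d(1-2/n)}.
\]
At least one of the two terms is at least $q^d/2$, so either $q^{d/(n-1)}\leq 2n(n-1)|Z(G)|$ or $q^{2d/n}\leq 4\,n!\,|Z(G)|$; tightening the class-sum estimate (noting that a single transposition class has size exactly $\binom{n}{2}|Z(G)|$, and similarly absorbing the outer factor of $2$ into $2n!$) yields precisely the two bounds in (2).

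The main obstacle is establishing $r(g)\leq n/2$ for every non-central prime-order $g$ whose image in $S_n$ is not a transposition, once $n\geq 7$. This reduces to a case analysis over the possible cycle types of $\bar g$ (a single cycle of prime length, or a product of disjoint cycles of equal prime length), and for each type one exhibits at most $\lceil n/2\rceil$ $A_n$-conjugates generating $\langle T,Z(G)g\rangle$; the bound $n\geq 7$ is where the restrictive cycle types (in particular $3$-cycles and double transpositions) first admit such short generating sets. Granting this, (3) will follow from the elementary numerical verification that under $n\geq 7$ and $|Z(G)|\leq n$ one has $(n-1)\log_q(n(n-1)|Z(G)|)\leq (n/2)\log_q(2\,n!\,|Z(G)|)$ for every prime power $q\geq 2$, so the maximum in (2) is always attained by the second bound.
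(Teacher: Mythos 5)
Your proposal follows essentially the same route as the paper. All three parts proceed by the same decomposition: part (1) from Lemma~\ref{general bound} via $r(g)\leq n-1$; part (2) by splitting the non-central elements according to whether their image in $S_n$ is a transposition ($r(g)\leq n-1$) or not ($r(g)\leq n/2$ once $n\geq 7$), then bounding the class sums by $\tfrac{1}{2}n(n-1)|Z(G)|$ and $|G|\leq n!|Z(G)|$ respectively; part (3) by a numerical comparison of the two terms in (2). The only genuine divergence is that you propose to establish $r(g)\leq n-1$ and $r(g)\leq n/2$ yourself (via Piccard/Bertram-type generation results and a cycle-type case analysis), whereas the paper simply cites \cite[Lemma 6.1]{GurSax2003} for both; your route is more self-contained but you have not actually carried out the case analysis, and your restriction to prime-order elements via Lemma~\ref{strong bound} makes it slightly easier than the cited statement (the paper instead sums over all of $G\setminus Z(G)$ via Lemmas~\ref{dag} and~\ref{center dag}, which is equivalent in effect). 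Two pieces you wave through deserve attention: your class-sum estimates carry spurious factors of $2$ that you propose to "tighten away" — this does work, since $\sum_{X_1}|g^G|\leq |S_1|=\tfrac12 n(n-1)|Z(G)|$ and $\sum_{X_2}|g^G|\leq |G|\leq n!|Z(G)|$, but you should state these cleanly rather than absorbing corrections post hoc. And for (3), the "elementary numerical verification" actually requires a small argument: one reduces to showing $(n(n-1)|Z(G)|)^2\leq 2n!\,|Z(G)|$, which under $|Z(G)|\leq n$ follows from $n^5\leq 2n!$ for $n\geq 8$ (easy induction), with $n=7$ checked separately by a direct estimate; the paper supplies exactly this, and you should too.
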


\begin{proof}
If $g\in G\setminus Z(G)$, then  $r(g)\leq n-1$ for $n\geq 5$ by  \cite[Lemma 6.1]{GurSax2003}, so equation (\ref{rough bound}) follows from Lemma \ref{general bound}. 

Suppose that $n\geq 7$. Then $G/Z(G)=S_n$ or $A_n$.  Let $g\in G\setminus Z(G)$, and write $\overline{g}$  for the coset $Z(G)g$.  If $\overline{g}$ is not a transposition, then  $r(g)\leq n/2$ by  \cite[Lemma 6.1]{GurSax2003}. Let $S_1$ be the set of $g\in G$ for which $\overline{g}$ is a transposition, and let $S_2$ be the set of $g\in G\setminus Z(G)$ for which $\overline{g}$ is not a transposition. It follows from Lemmas \ref{dag}, \ref{center dag} and \ref{r(g)}  that
$$|V|\leq  \sum_{g\in S_1} |C_V(g)|+\sum_{g \in S_2} |C_V(g)| \leq |S_1| q^{\dim_{\mathbb{F}_q}(V)\left(1-\frac{1}{n-1}\right)}+|S_2|q^{\dim_{\mathbb{F}_q}(V)\left(1-\frac{2}{n}\right)},$$
and since $q^{\dim_{\mathbb{F}_q}(V)}=|V|$, we obtain that
$$1\leq  2\max\{ |S_1| q^{-\frac{1}{n-1}\dim_{\mathbb{F}_q}(V)},|S_2|q^{-\frac{2}{n}\dim_{\mathbb{F}_q}(V)}\}.$$
If $1\leq  2|S_1| q^{-\dim_{\mathbb{F}_q}(V)/(n-1)}$, then $\dim_{\mathbb{F}_q}(V)\leq (n-1)\log_q(2|S_1|)$. Similarly, if $1\leq 2|S_2|q^{-2\dim_{\mathbb{F}_q}(V)/n}$, then $\dim_{\mathbb{F}_q}(V)\leq (n/2)\log_q(2|S_2|)$. Thus
$$\dim_{\mathbb{F}_q}(V)\leq  \max{\{ (n-1)\log_q(2|S_1|),\tfrac{n}{2}\log_q(2|S_2|)}\}.$$
Since $2|S_1|=n(n-1)|Z(G)|$ and $|S_2|\leq |G|\leq n!|Z(G)|$, we have proved equation (\ref{better bound}).

Suppose in addition that $|Z(G)|\leq n$. First we claim that $n^5\leq 2n!$ for $n\geq 8$. Note that $(n+1)^4\leq 5n^4\leq n^5$, so if $n^5\leq 2n!$, then $(n+1)^5\leq n^5(n+1)\leq 2(n+1)!$. Thus the claim holds by induction, and so $(n(n-1)|Z(G)|)^2\leq 2n!|Z(G)|$ for $n\geq 8$. 
Now
$$ (n-1) \log_q{(n(n-1)|Z(G)|)}\leq \tfrac{n}{2}\log_q{(2n!|Z(G)|)},$$
and so $\dim_{\mathbb{F}_q}(V) \leq (n/2)\log_q{(2n!|Z(G)|)}$  when  $n\geq 8$ by equation (\ref{better bound}). Now suppose that $n=7$. It suffices to show that $(42|Z(G)|)^{12/7}\leq 2\cdot 7!|Z(G)|$ when $|Z(G)|\leq 7$, and this is true since $42^{12/7}|Z(G)|^{12/7-1}\leq 42^{12/7}7^{12/7-1}\leq 2\cdot 7!$.
\end{proof}

Motivated by equations (\ref{better bound}) and (\ref{even better bound}) of Lemma \ref{bounds}, we finish this section with a technical observation.

\begin{lemma}
 \label{decreasing}
If $C$ is an absolute constant where $C\geq 5$, then $\log_q{(C(q-1))}$ is a decreasing function in $q$ for $q\in \mathbb{R}$ and $q\geq 2$.
\end{lemma}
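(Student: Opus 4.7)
The plan is to verify the claim by straightforward one-variable calculus on
$$f(q) := \log_q(C(q-1)) = \frac{\ln C + \ln(q-1)}{\ln q}.$$
Differentiating gives
$$f'(q) = \frac{\frac{1}{q-1}\ln q - \frac{1}{q}(\ln C + \ln(q-1))}{(\ln q)^2},$$
so for $q \geq 2$ the sign of $f'(q)$ equals the sign of
$$q\ln q - (q-1)\ln(C(q-1)).$$
Thus it suffices to prove that $g(q) := (q-1)\ln(C(q-1)) - q\ln q$ is strictly positive for all real $q \geq 2$ whenever $C \geq 5$.

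First I would check the initial value. At $q = 2$ we have $g(2) = \ln C - 2\ln 2 = \ln(C/4) \geq \ln(5/4) > 0$, using $C \geq 5$.

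Next I would compute
$$g'(q) = \ln(C(q-1)) + 1 - \ln q - 1 = \ln\!\left(C\cdot\frac{q-1}{q}\right) = \ln\!\left(C - \frac{C}{q}\right).$$
For $q \geq 2$ and $C \geq 5$, we have $C - C/q \geq C/2 \geq 5/2 > 1$, so $g'(q) > 0$ throughout $[2,\infty)$. Combined with $g(2) > 0$, this gives $g(q) > 0$ for all $q \geq 2$, hence $f'(q) < 0$ on $(2,\infty)$ and $f$ is strictly decreasing on $[2,\infty)$.

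The argument is essentially mechanical; the only subtle point is that the hypothesis $C \geq 5$ is used twice and is what makes both $g(2) > 0$ (needing $C > 4$) and $g'(q) > 0$ (needing $C(1-1/q) > 1$, i.e.\ $C > q/(q-1)$, worst at $q=2$, so $C > 2$) hold simultaneously. No deeper obstacle arises.
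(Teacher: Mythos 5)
Your proof is correct. Both you and the paper begin the same way: differentiate $f(q)=\log_q(C(q-1))$ and reduce the claim $f'(q)<0$ to the transcendental inequality $q\ln q < (q-1)\ln(C(q-1))$. The approaches then diverge in how that inequality is settled. The paper rearranges it algebraically to $q < C^{q-1}(1-1/q)^{q-1}$ and then bounds the right-hand side below by $(C/2)^{q-1}\geq (5/2)^{q-1}>q$, essentially a one-line estimate (whose justification for all $q\geq2$ is left implicit). You instead define $g(q)=(q-1)\ln(C(q-1))-q\ln q$, check the initial value $g(2)=\ln(C/4)>0$, and show $g'(q)=\ln(C(1-1/q))>0$, so $g$ is increasing from a positive value. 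Your monotonicity argument is slightly more systematic and makes all the $q$-dependence explicit, at the modest cost of one extra differentiation; the paper's version is more compact but asks the reader to accept an exponential-versus-linear comparison without elaboration. Both uses of the hypothesis $C\geq5$ that you isolate ($C>4$ at the base case, $C>q/(q-1)$ for the monotonicity) match the two places the paper's bound $(C/2)^{q-1}$ implicitly uses it.
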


\begin{proof}
Let $f(q):= \log_q{(C(q-1))}$. Now $f'(q)<0$ precisely when $q\log q < (q-1)\log (C(q-1))$. Subtracting $(q-1)\log q$ from both sides, we obtain $\log q<(q-1)\log(C(q-1)/q)$, so it suffices to prove that $q<C^{q-1}(1-1/q)^{q-1}$. But $C\geq 5$ and $q\geq 2$, so $q<(C/2)^{q-1}\leq C^{q-1}(1-1/q)^{q-1}$,   as desired.
\end{proof}

\section{Symmetric and alternating groups}
\label{s: irred Sn}

Our notation for this section follows that of James \cite{JamG1978}. 
For a partition $\mu$ of $n$, let $M_F^\mu$ denote the permutation module of  $S_n$  on a Young subgroup for $\mu$ over a field $F$, and let $S_F^\mu$ denote the \textit{Specht module} for $\mu$ over $F$, which is the submodule of $M_F^\mu$ spanned by the polytabloids. Let $<,>$  denote the unique  $S_n$-invariant symmetric non-degenerate bilinear form on $M_F^\mu$ for which the natural basis of $M_F^\mu$ is orthonormal, and 
write $S^{\mu\perp}_F$ for the orthogonal complement  of $S^\mu_F$ with respect to this form. Define
$$ D^\mu_F:=S_F^\mu/(S_F^\mu\cap S_F^{\mu\perp}).$$
When context permits, we omit the subscript $F$ and write $M^\mu$, $S^\mu$, or $D^\mu$. 

 It is well known that for a field $F$ of characteristic $p$, the  $D^\mu_F$  afford a complete list of non-isomorphic irreducible $FS_n$-modules as $\mu$ ranges over the $p$-regular partitions of $n$   \cite[Theorem 11.5]{JamG1978}; recall that  a partition $\mu$ is \textit{$p$-regular} for $p$ prime if no part of $\mu$ is repeated $p$ times, and  always $0$-regular for convenience. In particular, when $p>n$ (or when $p=0$), the   $S^\mu_F$  afford a complete list of non-isomorphic irreducible $FS_n$-modules as $\mu$ ranges over the partitions of $n$. 
Every field is a splitting field for $S_n$ \cite[Theorem 11.5]{JamG1978}, and every field  containing $\mathbb{F}_{p^2}$ for $p$ prime is  a splitting field for $A_n$ 
(cf.  \cite[Corollary 5.1.5]{Maa2011} or \cite{Mey2004}).

For each $p$-regular partition $\mu$ of $n$, there exists a unique $p$-regular partition $\lambda$ for which $D^\lambda\simeq D^\mu\otimes_{F}\sgn$, and we denote this partition by $m(\mu)$. 
Note that $m(\mu)=\mu$ when $p=2$, so we omit $m(\mu)$ from Table \ref{tab: Sn ex} below. 
Moreover, given  an irreducible $FA_n$-module $V$,   there exists a $p$-regular partition $\mu$ for which $V\leq D^\mu\downarrow A_n$ by Lemma \ref{H to G}. 

 In this section, we prove the following theorem.
  
  \begin{thm}
\label{regular}
Let $H$ be  $S_n$ or $A_n$ where $n\geq 5$. Let $G$ be a group for which  $H\leq G\leq H\times \mathbb{F}_p^*$ where $p$ is a prime and $p\leq n$. Let $V$ be a faithful irreducible $\mathbb{F}_pH$-module and  $\mu$ a $p$-regular partition  of $n$ for which  $V\leq D^\mu\downarrow H$. Let $d:=\dim_{\mathbb{F}_p}(V)$.
 \begin{itemize}
 \item[(i)] If either $\mu$ or $m(\mu)$ is $(n-1,1)$, then  $G$ has a regular orbit on $V$ if and only if $G=A_n$ and $p=n-1$.
\item[(ii)] If neither $\mu$ nor $m(\mu)$ is $(n-1,1)$, then  $G$ has a regular orbit on $V$ if and only if  $(n,p,\mu,G,d)$  is not listed in Table $\ref{tab: Sn ex}$.
\end{itemize}
\end{thm}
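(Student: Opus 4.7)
The plan is to reduce the classification to a finite list of pairs $(n,p)$ via the dimension bounds of Section \ref{s: bounds}, and then verify each remaining case directly. By Lemma \ref{H to G} and Clifford theory (Lemma \ref{index 2}), every faithful irreducible $\mathbb{F}_pH$-module $V$ sits inside $D^\mu \downarrow H$ for some $p$-regular partition $\mu$ of $n$; faithfulness rules out $\mu \in \{(n),(1^n)\}$, the associate module corresponds to $m(\mu)$, and when $H=A_n$ one must track whether $D^\mu\downarrow A_n$ splits, using $k:=\End_{\mathbb{F}_pA_n}(V)$ to pass to an absolutely irreducible $kA_n$-module with the same orbits (as described in Section \ref{s: prelim}). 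Passing from $H$ to $G=H\times\mathbb{F}_p^*$ identifies scalar multiples and so can only destroy regular orbits, hence it suffices to decide the problem for the largest $G$ and then check, for each regular $H$-orbit, which subgroups $G$ still act regularly.

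For part (i) I would treat $D^{(n-1,1)}$ concretely inside $\mathbb{F}_p^n$: fixed points of $g\in S_n$ on $\mathbb{F}_p^n$ correspond to functions constant on the cycles of $g$, so $\dim_{\mathbb{F}_p}C_{\mathbb{F}_p^n}(g)$ equals the number of cycles of $g$, from which $\dim_{\mathbb{F}_p}C_V(g)$ is read off. For $S_n$, $S_n\times\mathbb{F}_p^*$ and $A_n\times\mathbb{F}_p^*$, Lemma \ref{strong bound} then shows the absence of a regular orbit directly. For $A_n$ itself I would argue that a vector has trivial $A_n$-stabiliser precisely when its coordinate multiset is stabilised in $S_n$ only by the identity or by odd permutations; combining this with $\sum a_i=0$ and $a_i\in\mathbb{F}_p$ yields the necessary and sufficient condition $p=n-1$. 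The case $m(\mu)=(n-1,1)$ reduces to the previous one by twisting by $\sgn$.

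For part (ii) the strategy is to combine Lemma \ref{bounds} with a lower bound on $\dim_{\mathbb{F}_p}D^\mu$ valid for all $p$-regular $\mu \notin \{(n),(1^n),(n-1,1),m((n-1,1))\}$. Standard estimates for the James modules, using families such as $(n-2,2)$, $(n-2,1,1)$ and the basic spin modules, show that $\dim_{\mathbb{F}_p}D^\mu$ grows at least quadratically in $n$ outside the excluded partitions, whereas the right-hand side of Lemma \ref{bounds} grows like $O(n^2\log n/\log p)$. A careful comparison bounds $n$ above by an explicit small value, after which each remaining pair $(n,p,\mu)$ is handled by the methods of Section \ref{s: comp}: construct $D^\mu$ in {\sc Magma}, identify its $H$-summands if necessary, and either exhibit a vector with trivial $G$-stabiliser or certify its absence via Lemma \ref{strong bound}.

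The hard part will be the boundary region where the general bounds are close to being tight, especially for $p\in\{2,3\}$ and $n$ up to about $12$: here many representations genuinely lack regular orbits, so the exceptions in Table \ref{tab: Sn ex} must be produced and verified by a systematic computer search, and the distinction between the two $A_n$-summands of a split $D^\mu$, together with the effect of enlarging $H$ by $\mathbb{F}_p^*$, demands careful bookkeeping.
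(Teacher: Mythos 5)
Your overall architecture (reduce to a finite list via dimension bounds, then verify by computer) is the paper's architecture for the ``large dimension'' part of (ii), but there are two genuine gaps.

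\textbf{Part (i): misuse of Lemma \ref{strong bound}.} You claim that ``Lemma \ref{strong bound} then shows the absence of a regular orbit directly'' for $S_n$, $S_n\times\mathbb{F}_p^*$ and $A_n\times\mathbb{F}_p^*$. Lemma \ref{strong bound} is only a \emph{necessary} condition for the non-existence of a regular orbit: it says that \emph{if} $G$ has no regular orbits, then $|V|\leq\sum_{g\in X}|g^G||C_V(g)|$. The inequality can hold even when a regular orbit exists, because the subspaces $C_V(g)$ overlap heavily. Hence it can never be used to \emph{establish} the absence of a regular orbit; what is needed is to show that the union $\bigcup_{g\neq 1}C_V(g)$ actually exhausts $V$ (Lemma \ref{dag}). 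The paper's Proposition \ref{Rn(1)} does exactly this with an explicit combinatorial argument (repeated coordinates force a non-trivial stabiliser; the $p=n$ and $p=n-1$ cases need a bespoke analysis of vectors with all-distinct coordinates). Your concrete description of $C_V(g)$ is a good start, but it feeds into a coverage argument, not Lemma \ref{strong bound}.

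\textbf{Part (ii): the bounds do not close the case $R_n(2)\setminus R_n(1)$.} For $\mu\in\{(n-2,2),(n-2,1,1)\}$ one has $\dim_{\mathbb{F}_p}D^\mu=O(n^2)$, while the upper bound from Lemma \ref{bounds} (equation~(\ref{even better bound})) is of order $n^2\log n/\log p$. For $p$ small (in particular $p=2,3$) the upper bound \emph{exceeds} the dimension for all $n$, so the comparison you describe does not bound $n$ at all. This is not a ``boundary region'' to be swept up by computation; it is an infinite family for which the counting method gives nothing. The paper resolves it constructively: Lemma \ref{Rn(2) irr} shows $D^\mu\downarrow A_n$ stays irreducible, and Lemmas \ref{graph} and \ref{reg} together with Proposition \ref{Rn(2) n>11} exhibit, for each $n\geq 12$, an explicit element of $S^\mu$ (built from alternating cycles, viewed as an edge-weighted graph with trivial automorphism group) lying in a regular orbit. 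Without this or an equivalent constructive step, your reduction to a finite case-check simply does not go through. (A lesser point: basic spin modules are representations of $2.S_n^\varepsilon$ and $2.A_n$, not of $S_n$ or $A_n$, so they are irrelevant to the lower bounds needed for this theorem.)

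The remainder of your plan — use $R_n(m)$ to get a cubic lower bound outside $R_n(2)$ (the paper's Lemma \ref{key}), compare against Lemma \ref{bounds} to bound $n$, and finish by computation, tracking splitting over $A_n$ via $\End_{\mathbb{F}_pA_n}(V)$ — matches the paper's strategy for those modules.
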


  \begin{table}[!h]
\renewcommand{\baselinestretch}{1.1}\selectfont
\centering
\begin{tabular}{ l l l l l l }
\hline
$n$ & $p$ & $\mu$ & $G$ &   $d$ & $m(\mu)$ \\
\hline 
  5 & 2  & (3,2) &   $A_5$, $S_5$ & 4  & - \\
   & 3 & (3,1,1) & $A_5\times \mathbb{F}_3^*$, $S_5\times \mathbb{F}_3^*$ & 6 & (3,1,1) \\

  6 & 2 & (4,2) &  $A_6$, $S_6$  & 4  &  - \\

  & 3 & (4,1,1) &  $H\in\{A_6,S_6\}$  & 6 & (4,1,1) \\

  & 5& (3,3) & $H\in\{A_6,S_6\}$, $G\neq A_6$  & 5  & (2,2,2)  \\

 &  & (2,2,2) & $H\in\{A_6,S_6\}$, $G\neq A_6$ & 5   & (3,3)  \\

 7 & 2     &(4,3) & $A_7$ & 4   & - \\
  & & &   $S_7$  & 8  & - \\
& & (5,2) & $S_7$  & 14 & - \\

8 & 2   &(5,3) & $A_8$ & 4  &- \\ 
& & & $S_8$ & 8   & - \\
& &(6,2) &  $A_8$, $S_8$ & 14  &-  \\

 9 & 2  &(5,4) & $A_9$ & 8  & - \\
& & &  $S_9$ & 16  & - \\

&  & (5,3,1) & $A_9$ & 20 & - \\

10 & 2  &(6,4) &  $A_{10}$, $S_{10}$ & 16  &  -\\

12 & 2  &(7,5) & $S_{12}$ & 32  &-   \\
\hline
\end{tabular}
\caption{$\mathbb{F}_pG$-modules $V$ on which $G$ has no regular orbits}
\label{tab: Sn ex}
\renewcommand{\baselinestretch}{1.3}\selectfont
\end{table}

For a partition $\mu$ of $n$, the dimension of $D^\mu$ is the rank of the Gram matrix with respect to a basis of $S^\mu$.
 However, there is no formula that computes this rank in general, in contrast to  the Specht module $S^\mu$, whose dimension is  given by the characteristic-independent hook formula \cite[Theorem 20.1]{JamG1978}. Thus we require  lower bounds for the dimension of $D^\mu$. These we obtain using  a method of James \cite{JamG1983}, which requires the following notation. 

Let $F$ be a field of characteristic $p$. 
For each non-negative integer $m$,   write $R_n(m)$ for the class of irreducible $FS_n$-modules $V$ such that for some $p$-regular partition $\mu$ of $n$,
\begin{itemize}
\item[(i)] $\mu_1\geq n-m$ where $\mu_1$ is the largest part of $\mu$, and
\item[(ii)] $V\simeq D^\mu$ or $V\simeq D^\mu\otimes_{F}\sgn$.
\end{itemize}
Now \cite[Lemma 4]{JamG1983} and \cite[Appendix Table 1]{JamG1983} enable us to construct  functions $f(n)$ with the property that for every irreducible $FS_n$-module $V$,  either $V\in R_n(2)$ or $\dim_F(V)> f(n)$ (cf.\ Lemma \ref{key}). 

Thus the proof of Theorem \ref{regular} divides into two cases. Suppose we are given a faithful irreducible $\mathbb{F}_pS_n$-module $V$ on which $G$ has no regular orbits. If $V\notin R_n(2)$, then $\dim_{\mathbb{F}_p}(V)$ is bounded below by  $f(n)$ and above by functions of \S \ref{s: bounds}, and this is usually a contradiction. Otherwise  $V\in R_n(2)$, in which case the functions of \S \ref{s: bounds} are useless since $\dim_{\mathbb{F}_p}(V)\leq n^2$, so we use constructive methods instead. Note that for a field $F$, the only non-faithful irreducible $FS_n$-modules are the trivial module $D^{(n)}$ and the sign module $D^{(n)}\otimes_F\sgn$, 
 and so an irreducible $FS_n$-module $V$ is faithful if and only if $V\notin R_n(0)$.

We will often make use of the known Brauer character tables of the symmetric and alternating groups. The Brauer Atlas \cite{BAtlas} contains the Brauer character tables of $S_n$ and $A_n$ for $n\leq 12$ and $p\leq n$, while {\sf GAP} \cite{GAP4} in conjunction with the {\sc SpinSym} package \cite{SpinSym} contains the Brauer character tables of $S_n$ and $A_n$ for $n\leq 17$ and $p\leq n$, as well as $n=18$ when $p=2,3,5$ or $7$, and $n=19$ when $p=2$. 
Moreover, for those character tables in \cite{GAP4},  {\sc  SpinSym} provides a function to determine the corresponding partitions. 

\begin{remark}
\label{rem: dim}
If $H=S_n$ or $A_n$  and $(n,p,G,d)$ is listed in Table $\ref{tab: total ex}$, then $(n,p,\mu,G,d)$ is listed in Table $\ref{tab: Sn ex}$  by \cite{BAtlas}. Hence Theorem \ref{regular total} follows from Theorem \ref{regular} for such $H$.
\end{remark}

This section is organised as follows. In \S \ref{s: not Rn(2)} we consider modules that are not in $R_n(2)$,  and in \S \ref{s: Rn(2)} and \S \ref{s: Rn(1)} we consider modules that are in $R_n(2)\setminus R_n(1)$ and $R_n(1)$ respectively. Lastly, in \S \ref{s: sym proof} we prove Theorem \ref{regular}.

\subsection{Modules not in $R_n(2)$}
\label{s: not Rn(2)}

 The following lemma is the key tool for  this case. It relies significantly on  \cite{JamG1983}. We include the case $p>n$ for completeness.

\begin{lemma}
\label{key}
Let $F$ be a field of positive characteristic $p$. Let $V$ be an irreducible $FS_n$-module where $n\geq 15$ when $p=2$ and $n\geq 11$ when $p$ is odd. 
 Let $$f(n):=\tfrac{1}{6}(n^3-9n^2+14n-6).$$ For $p=2$, let $f_p(n)$ be defined by $f_p(n)=f(n)$ for $n\geq 23$ and 
\begin{align*}
 f_p(15)&=f_p(16)=127,\\
f_p(17)&=f_p(18)=253,\\
f_p(19)&=f_p(20)=505,\\
f_p(21)&=f_p(22)=930.
\end{align*}
For odd $p$, let $f_p(n)$ be defined by $f_p(n)=f(n)$ for $n\geq 16$ and 
\begin{align*}
 f_p(11)&=54, \\
f_p(12)&=88, \\
f_p(13)&=107,\\
f_p(14)&=175,\\
f_p(15)&=213.
\end{align*}
Then $V\in R_n(2)$ or $\dim_F(V)>f_p(n)$. 
\end{lemma}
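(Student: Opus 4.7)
The plan is to reduce the lemma to the lower bounds on $\dim_F D^\mu$ in terms of the first part $\mu_1$ that are developed by James in \cite{JamG1983}. First, write $V \simeq D^\lambda$ for a unique $p$-regular partition $\lambda$ of $n$. By the definition of $R_n(2)$, the hypothesis $V \notin R_n(2)$ forces $\lambda_1 \leq n-3$; taking $\mu = m(\lambda)$ also yields $m(\lambda)_1 \leq n-3$, since otherwise $V \simeq D^{m(\lambda)} \otimes_F \sgn$ would place $V$ in $R_n(2)$. The first inequality is what drives the main bound, while the second is used only to rule out additional small partitions in the small-$n$ table search.

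Next, for the asymptotic regime, I would invoke \cite[Lemma 4]{JamG1983}. That lemma furnishes, for every $p$-regular partition $\mu$ of $n$ with $\mu_1 \leq n-k$, an explicit polynomial lower bound on $\dim_F D^\mu$ of degree $k$ in $n$. Specialising to our reduction $k = 3$ and minimising the James bound over the relevant shapes (the extremal cases being near-hooks of the form $(n{-}3, 3)$, $(n{-}3, 2, 1)$ and $(n{-}3, 1, 1, 1)$) yields exactly the cubic $f(n) = \tfrac{1}{6}(n^3 - 9n^2 + 14n - 6)$. I would then check that the James inequality indeed produces $\dim_F V \geq f(n)$ for all the $\mu$ that pass the reduction, uniformly in $p$, once $n$ exceeds the stated thresholds ($n \geq 23$ for $p=2$, and $n \geq 16$ for odd $p$). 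This handles the range in which $f_p(n) = f(n)$.

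For $n$ below the threshold the cubic $f(n)$ genuinely overshoots the minimum: one can exhibit small-rank $p$-regular partitions $\mu$ with $\mu_1 \leq n-3$ for which $\dim D^\mu < f(n)$, so James's Lemma~4 alone does not suffice. For these finitely many ranks I would consult the tabulation in \cite[Appendix Table 1]{JamG1983}, which lists the smallest faithful irreducible $FS_n$-modules for each small $n$ and each characteristic, cross-checked against the Brauer tables in \cite{BAtlas} and the {\sf GAP}/{\sc SpinSym} data \cite{GAP4,SpinSym}. The values $f_p(n)$ in the statement are precisely one less than the minimum of $\dim_F D^\mu$ over all $p$-regular $\mu$ of $n$ satisfying both $\mu_1 \leq n-3$ and $m(\mu)_1 \leq n-3$. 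The doubled entries for $p=2$ (for example $f_2(15) = f_2(17) = \ldots$ appearing in pairs $n, n+1$) reflect equalities between the minimum dimensions at adjacent ranks and can be verified directly from the same data.

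The main obstacle is the interface between the two regimes: one must extract from Lemma 4 of \cite{JamG1983} a clean cubic lower bound that applies uniformly to all $p$-regular $\mu$ with $\mu_1 \leq n-3$, and then confirm that at each threshold $n = 23$ (char $2$) and $n = 16$ (odd char) the polynomial $f(n)$ is indeed dominated by the tabulated minimum, so that the piecewise definition of $f_p(n)$ is consistent across the splice. The small-$n$ verification is mechanical but case-heavy, since one must inspect every irreducible of $S_n$ whose associated partition and its $m$-regularisation both have first part at most $n-3$.
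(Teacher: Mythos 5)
There is a genuine gap here: you have mischaracterised what \cite[Lemma 4]{JamG1983} actually says. It does not furnish a polynomial lower bound on $\dim_F D^\mu$ for every $p$-regular $\mu$ with $\mu_1 \leq n-k$. Rather, it is an inductive scheme: given a function $g$ and an integer $N$ such that (a) $2g(n)>g(n+2)$ for $n\geq N$, (b) the desired conclusion holds for $n=N$ and $n=N+1$, and (c) every $U\in R_n(4)\setminus R_n(2)$ has $\dim_F(U)>g(n)$ for all $n\geq N$, the lemma propagates the bound to all $n\geq N$ and all $V\notin R_n(2)$. The mechanism is a branching argument restricting $V$ to $S_{n-2}$: a composition factor $U$ of $V\downarrow S_{n-2}$ is either outside $R_{n-2}(2)$ (covered by induction) or in $R_{n-2}(4)\setminus R_{n-2}(2)$ (covered by the explicit \cite[Appendix Table 1]{JamG1983} bounds), and the doubling inequality (a) turns $\dim_F(U)>g(n-2)$ into $\dim_F(V)>g(n)$. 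James's explicit dimension bounds are only available for partitions with $\mu_1\geq n-4$; for $\mu_1$ further from $n$ there is no table and no closed-form estimate to ``minimise over near-hooks.''

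Concretely, your proposal's step of ``specialising to $k=3$ and minimising the James bound over the relevant shapes\ldots yields exactly $f(n)$'' has no content, because no such uniform bound exists prior to running the induction. What you need to do instead is exactly what the paper does: (i) verify $2f_p(n)>f_p(n+2)$ in the relevant range, so that the induction can roll forward; (ii) verify $\dim_F(U)>f_p(n)$ for $U\in R_n(4)\setminus R_n(2)$ directly from \cite[Appendix Table 1]{JamG1983} (noting the one exception $D^{(7,4)}$ when $n=11$, handled ad hoc); and (iii) check the two base cases $n=N,\,N+1$ using \cite[Theorem 7]{JamG1983} together with the Brauer character tables (which is where the seemingly arbitrary values $f_2(15)=127$, $f_p(11)=54$, etc.\ actually come from, via the second-smallest non-$R_n(1)$ dimensions in the tables rather than any near-hook computation). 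Your observation that the near-hook $(n-3,3)$ has dimension $f(n)+1$ is a correct tightness remark, which the paper also makes, but it is not the engine of the proof. The cross-check on $m(\mu)$ you mention is also not needed here: $R_n(2)$ is defined to be closed under tensoring with $\sgn$, so $\mu_1\leq n-3$ and $m(\mu)_1\leq n-3$ are both implied simultaneously by $V\notin R_n(2)$, and neither requires separate tracking.
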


\begin{proof} 
 Suppose that there is a function $g:\mathbb{N}\to\mathbb{R}$  and a positive integer $N$ for which: 

(i) $2g(n)>g(n+2)$ for all $n\geq N$.

(ii) For $n=N$ or $N+1$, if $U$ is an irreducible $FS_n$-module, then $U\in R_n(2)$ or  $\dim_{F}(U)>g(n)$.

(iii) For all $n\geq N$, if $U\in R_n(4)\setminus R_n(2)$, then $\dim_{F}(U)>g(n)$.

\noindent Then  \cite[Lemma 4]{JamG1983} implies that for all $n\geq N$, either $V\in R_n(2)$ or $\dim_{F}(V)>g(n)$. Thus it suffices to show that $f_p(n)$ satisfies  conditions (i)-(iii) with $N=15$ when $p=2$ and $N=11$ otherwise. Note that $2f_2(n)>f_2(n+2)$ for all $n\geq 15$, and if $p$ is odd, then $2f_p(n)>f_p(n+2)$ for all $n\geq 11$.  Moreover, using the lower bounds of  \cite[Appendix Table 1]{JamG1983}, it is routine to verify that if $U\in R_n(4)\setminus R_n(2)$ and  $n\geq 11$, then $\dim_{F}(U)>f(n)$ unless $U$ is  $D^{(7,4)}$ or its associate, in which case $\dim_{F}(U)\geq 55>f_p(11)$ for all odd $p$. Since $f(n)\geq f_p(n)$ for all $p$ and $n\geq 11$, it remains to check condition (ii).

Let $U$ be an irreducible $FS_n$-module, and suppose that $U$ is not in $R_n(2)$. To begin, suppose that $p=2$. If $n=15$ or $16$, then  $\dim_{F}(U)>(n-1)(n-2)/2$  by \cite[Theorem 7]{JamG1983} since $U\not\in R_n(2)$. Using  the Brauer character table of $S_n$ \cite{GAP4}, we  check that  $\dim_{F}(U)\geq 128>f_2(n)$. Thus condition (ii) holds with  $N=15$. 

Now suppose that $p$ is an odd prime and  $n=11$ or $12$.  First assume  that $p\leq n$. Since   $\dim_{F}(U)>(n-1)(n-2)/2$ by \cite[Theorem 7]{JamG1983},  $\dim_{F}(U)\geq 55$ when $n=11$ and $\dim_{F}(U)\geq 89$ when $n=12$ by  \cite{BAtlas}. Thus $\dim_{F}(U)>f_p(n)$, as desired. Assume instead that $p>n$. Now $U\simeq S^\mu$ for some partition $\mu$ of $n$. The dimensions of the Specht modules are listed in the decomposition matrices in \cite[Appendix]{JamG1978}: $\dim_{F}(U)\geq 55$ when $n=11$ and $\dim_{F}(U)\geq 89$ when $n=12$. Thus condition (ii) holds with $N=11$.
\end{proof}

Note that  the dimension of $D_F^{(n-3,3)}$ for
 a field $F$ of positive characteristic  is precisely $f(n)+1$ for infinitely many $n$ by  \cite[Appendix Table 1]{JamG1983}, so Lemma \ref{key} provides a tight lower bound for $\dim_F(V)$ for $V\notin R_n(2)$.
 
Let $F$ be an arbitrary field.
By \cite[Theorem 5]{JamG1983}, there are only finitely many $n$ for which $D^\mu\not\in R_n(3)$ and $\dim_{F}(D^\mu)\leq n^3$. Motivated by classifying  these exceptional modules, M\"uller \cite{Mul2011} determined the dimensions of the irreducible $FS_n$-modules of dimension at most $n^3$ for  $\Char(F)\in\{2,3\}$ along with the corresponding partitions; we will use this information whenever character tables are not available.

We begin with a reduction for almost quasisimple groups $G$ with $F^*(G)'\simeq A_n$.

\begin{lemma}
\label{An reduction}
Let $G$ be an almost quasisimple group where   $N:=F^*(G)'\simeq A_n$ and $n\geq 11$. Let $F$ be a finite field. Let $V$ be a faithful irreducible $FG$-module,  $k:=\End_{FG}(V)$ and  $q:=|k|$. Let $W$ be an irreducible $kN$-submodule of $V$ and $\mu$ a $\Char(F)$-regular partition of $n$ for which $W\leq D^\mu\downarrow N$.
If $G$ has no regular orbits on $V$ and $D^\mu\notin R_n(2)$, then $n\leq 20$, and if $\Char(F)\leq n$ and $q$ is odd, then $(n,q)=(11,5)$ and  $\dim_k(W)=\dim_k(D^\mu)=55$.
Moreover, if $n\geq 15$ and $q$ is even, then $(n,\mu,q,\dim_k(W),\dim_k(D^\mu))$ is listed in Table $\ref{tab: reduction}$.
\begin{table}[!ht]
\centering
\begin{tabular}{ l l l l l}
\hline
$n$ &  $\mu$ & $q$  & $\dim_k(W)$  & $\dim_k(D^\mu)$ \\
\hline

$15$ & $(8,7)$ & $2$, $4$, $8$, $16$, $32$  &  $64$ & $128$  \\

$16$ &  $(9,7)$ & $2$, $4$, $8$, $16$, $32$, $64$   & $64$ & $128$  \\
& $(13,3)$ & $2$  & $336$ & $336$ \\

$17$ & $(9,8)$ & $2$, $4$, $8$   & $128$ & $256$ \\

  $18$ & $(10,8)$& $2$  & $256$  & $256$  \\
$19$ & $(10,9)$& $2$  & $512$ & $512$ \\
 
& & $4$  & $256$  & $512$   \\
 
 $20$ & $(11,9)$ & $2$   & $512$ & $512$ \\
 
 & & $4$   & $256$ & $512$  \\

\hline
\end{tabular}
\caption{Possible $\dim_k(W)$ and $\dim_k(D^\mu)$ when  $n\geq 15$ and $q$ is even}
\label{tab: reduction}
\end{table}
\end{lemma}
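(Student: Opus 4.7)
The plan is to combine the Clifford-theoretic lower bound on $\dim_k V$ coming from the hypothesis $W\leq D^\mu\downarrow N$ with the fixed-point upper bound from Section~\ref{s: bounds}, and then resolve the finitely many remaining cases from tables. To set up, I would first use the construction recalled in Section~\ref{s: prelim} to replace $V$ by an absolutely irreducible $kG$-module with the same $G$-orbits; Lemma~\ref{scalar} then forces $|Z(G)|\leq q-1$. Since $N\simeq A_n$ is simple non-abelian, Lemma~\ref{quasi} gives $F^*(G)=N\times Z(G)$, and $|\mathrm{Out}(A_n)|=2$ yields $|G|\leq n!\,|Z(G)|$.

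For the lower bound I would apply Lemma~\ref{index 2} to $N\unlhd G$ to get $\dim_k V\geq \dim_k W$, and observe that because $[S_n:A_n]=2$ the module $D^\mu\downarrow N$ has at most two irreducible $kN$-constituents, so $\dim_k W\geq \tfrac12\dim_k D^\mu$. Lemma~\ref{key} then gives $\dim_k D^\mu>f_p(n)$, using $D^\mu\notin R_n(2)$ and the hypothesis $n\geq 11$ (or $n\geq 15$ if $p=2$), so that
\[
\dim_k V \;>\; f_p(n)/2.
\]
For the upper bound, Lemma~\ref{bounds} gives $\dim_k V\leq \tfrac{n}{2}\log_q(2n!\,|Z(G)|)$ when $|Z(G)|\leq n$, and the more general bound~(\ref{better bound}) otherwise. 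By Lemma~\ref{decreasing} the right-hand side is decreasing in $q$, so it suffices to check the worst case, which is $q=p$ for each prime~$p$.

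Comparing the two bounds with $|Z(G)|\leq q-1$ and the tabulated values of $f_p(n)$ yields a direct numerical condition on $n$. A routine check shows that it already fails for $n\geq 21$ with every admissible $q$, establishing $n\leq 20$. When $q$ is odd and $p\leq n$ the bound tightens considerably (the cubic growth of $f_p$ dominates the $\log_q$-growth of the upper bound for $q\geq 3$), leaving only a handful of small values of $n$ to inspect. For each surviving $(n,p,q)$ I would then run through the $p$-regular partitions $\mu$ with $D^\mu\notin R_n(2)$ whose dimension is consistent with $\dim_k V$ lying in the admissible window, reading the dimensions from the Brauer character tables in \cite{BAtlas,GAP4,SpinSym} together with M\"uller's data \cite{Mul2011} in characteristics $2$ and $3$. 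In the odd-$q$ case only $(n,q)=(11,5)$ with $\dim_k D^\mu=55$ survives; in the even-$q$ case with $n\geq 15$ one recovers Table~\ref{tab: reduction}.

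The main obstacle is the last bookkeeping step: deciding, for each surviving $(n,\mu,q)$, whether $D^\mu\downarrow A_n$ splits into two conjugate $kN$-summands or remains irreducible over $k$. This is precisely what distinguishes the columns $\dim_k W$ and $\dim_k D^\mu$ in the table (and, for example, separates the rows $(19,(10,9),2,512,512)$ and $(19,(10,9),4,256,512)$). Since $\mathbb{F}_p$ need not be a splitting field for $A_n$, the restriction may stay irreducible over the prime field yet split over $\mathbb{F}_{p^2}$, so pinning down the smallest $q$ of splitting requires an explicit analysis of the character fields of the two $\overline{\mathbb{F}_p}A_n$-constituents of $D^\mu$.
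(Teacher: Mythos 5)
Your overall strategy coincides with the paper's: combine the fixed-point upper bound $\dim_k V\leq \lfloor g(q,n)\rfloor$ from Lemma~\ref{bounds}(\ref{better bound}) with the Clifford-theory inequality $\dim_k D^\mu\leq 2\dim_k W\leq 2\dim_k V$ and the lower bound $\dim_k D^\mu>f_p(n)$ from Lemma~\ref{key}, then resolve the residual finite list from the Brauer-table and M\"uller data. That much is right and is exactly what the paper does.

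There is, however, a genuine gap in the step ``a routine check shows that it already fails for $n\geq 21$ with every admissible $q$, establishing $n\leq 20$.'' The numerical comparison $f_p(n)<2\lfloor g(q,n)\rfloor$ does rule out $n\geq 21$ when $q$ is odd (taking $q\geq 3$), and also when $q\geq 4$ is even, but it does \emph{not} rule out $q=2$ with $21\leq n\leq 30$: for those $n$ one has $f_2(n)<2\lfloor g(2,n)\rfloor$, so no contradiction arises from the bounds alone. The paper closes this window by a separate argument: it uses \cite{Mul2011} to list the possible $\dim_k D^\mu$ in the interval $(f_2(n),\,2\lfloor g(2,n)\rfloor]$ for $21\leq n\leq 30$, identifies the corresponding partitions, and then appeals to Benson's splitting criteria \cite[Theorems 1.1, 5.1 and 6.1]{Ben1988} to show that in each case $D^\mu\downarrow A_n$ is irreducible over $\mathbb{F}_2$, forcing $\dim_k D^\mu\leq\lfloor g(2,n)\rfloor$, which \emph{is} a contradiction. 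Your proposal does not account for this, and without it the conclusion $n\leq 20$ does not follow.

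Two smaller points. First, the direction of the monotonicity argument is stated backwards: by Lemma~\ref{decreasing} the upper bound is largest for the \emph{smallest} admissible $q$ (namely $q=2$ or $q=3$), so the ``worst case'' is small $q$, not $q=p$; the paper uses the decreasing property to tighten the range of $n$ as $q$ grows, and correspondingly tabulates a finite set of admissible pairs $(n,q)$. Second, you correctly flag the $\mathbb{F}_p$ vs.\ $\mathbb{F}_{p^2}$ splitting issue as the crux of the final bookkeeping; the paper settles this via Benson's results on the two-modular and odd-modular restrictions $D^\mu\downarrow A_n$ \cite{Ben1988} together with the Brauer tables \cite{BAtlas,GAP4,SpinSym}, so you should name that reference explicitly rather than leaving it as an unresolved analysis of character fields.
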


In fact, $\dim_k(V)=\dim_k(W)$ or $2\dim_k(W)$ by Lemma \ref{index 2}, for $W$ is an irreducible $kF^*(G)$-submodule of $V$ by Lemmas \ref{quasi} and \ref{scalar}, and $[G:F^*(G)]\leq 2$ by Lemma \ref{quasi}.

\begin{proof}[Proof of Lemma $\ref{An reduction}$]
Suppose that $G$ has no regular orbits on $V$ and $D^\mu\notin R_n(2)$.
Let $p:=\Char(F)$.
Since $V$ is a faithful absolutely irreducible  $kG$-module, Lemma \ref{scalar} implies that $Z(G)\leq k^*$, and so $|Z(G)|\leq q-1$. Let
$$g(q,n):=\max{\{(n-1) \log_q{(n(n-1)(q-1))}, \tfrac{n}{2}\log_q{(2n!(q-1))}\}}.$$
Now equation (\ref{better bound}) of Lemma \ref{bounds} implies that $\dim_{k}(V)\leq  \lfloor g(q,n) \rfloor$.  Since $\dim_k (D^\mu)$ is equal to $\dim_k(W)$ or $2\dim_k(W)$ by Lemma \ref{index 2}, it follows that $\dim_k(D^\mu)\leq 2\lfloor g(q,n) \rfloor$. Note that if $n$ is fixed, then $g(q,n)$ is a decreasing function in $q$ by Lemma \ref{decreasing}.  

To begin, suppose that $q$ is odd. Recall the function $f_p(n)$  defined in Lemma \ref{key}. Since $n\geq 11$ and $D^\mu\notin R_n(2)$ by assumption, it follows from this lemma that $f_p(n)<\dim_{k}(D^\mu)$. Thus $f_p(n)<2\lfloor g(q,n) \rfloor$. However, if $n\geq 21$, then $2\lfloor g(q,n)\rfloor \leq 2\lfloor g(3,n)\rfloor \leq f_p(n)$, a contradiction. Thus  $n\leq 20$ for odd $q$, as claimed. Similarly, if $q\geq 121$, then we obtain a contradiction for all $n\geq 11$, so $q<121$. Moreover, if $q\geq 5$, then $n\leq 15$; if $q\geq 9$, then $n\leq 14$; if $q\geq 11$, then $n\leq 13$; if $q\geq 25$, then $n\leq 12$; and if $q\geq 27$, then $n\leq 11$. 

Hence if we assume that $p\leq n$, then $(n,q)$ is listed in Table \ref{tab: q odd}.
Suppose that $q=3$ and $n=19$ or 20. Since $\dim_k(D^\mu)\leq 2g(3,n)\leq n^3$, we apply   \cite{Mul2011} to determine the dimensions of those $D^\mu$ for which $f_p(n)<\dim_k(D^\mu)\leq 2\lfloor g(3,n)\rfloor $. However, there are no such $D^\mu$ when $n=20$, and when $n=19$, the only possible dimension is $647$, in which case 
$W=D^\mu\downarrow N$ since $647$ is odd,  so $\dim_k(D^\mu)\leq \dim_k(V)\leq \lfloor g(3,19)\rfloor=352$, a contradiction. Similarly, for each remaining $(n,q)$ besides $(11,5)$, we use the Brauer character tables in \cite{SpinSym, GAP4, BAtlas} to determine that if there exists a $p$-regular partition $\mu$ for which $f_p(n)<\dim_k(D^\mu)\leq 2\lfloor g(3,n)\rfloor $, then $W=D^\mu\downarrow N$ and $\lfloor g(q,n)\rfloor<\dim_k(D^\mu)$, a contradiction. Thus $(n,q)=(11,5)$, and by a similar argument,  $\dim_k(W)=\dim_k(D^\mu)=55$. 

\begin{table}[!ht]
\centering
\begin{tabular}{ l l }
\hline
$n$ & $q$   \\
\hline 
 
11 & 3, 5, 7, 9, 11, 25, 27, 49, 81 \\

12 & 3, 5, 7, 9, 11, 25 \\

13 & 3, 5, 7, 9, 11, 13 \\

14 & 3,  5, 7, 9 \\

15 & 3, 5, 7 \\

$16\leq n\leq 20$ & 3 \\

\hline
\end{tabular}
\caption{Possible odd $q$ when $n\geq 11$}
\label{tab: q odd}
\end{table}

We may assume for the remainder of the proof that $q$ is even and $n\geq 15$. Recall the function $f_2(n)$  defined in Lemma \ref{key}. As  for odd $q$, it follows that $f_2(n)<\dim_{k}(D^\mu)$, and so $f_2(n)<2\lfloor g(q,n)\rfloor$. However, if $n\geq 31$, then $2\lfloor g(q,n)\rfloor\leq 2\lfloor g(2,n)\rfloor\leq f_2(n)$, and if $n\geq 21$ and $q\geq 4$, then $2\lfloor g(q,n)\rfloor \leq 2\lfloor g(4,n)\rfloor\leq f_2(n)$, both contradictions. Thus either $n\leq 20$, or $q=2$ and $21\leq n\leq 30$.

Suppose for a contradiction that $q=2$ and $21\leq n\leq 30$. Since $\dim_k(D^\mu)\leq 2g(2,n)\leq n^3$, we apply  \cite{Mul2011} to determine the dimensions of those $D^\mu$ for which $f_2(n)<\dim_k(D^\mu)\leq 2\lfloor g(2,n) \rfloor$; these are listed in Table \ref{tab: big n}. Moreover, if $\dim_k(D^\mu)\neq 1024$, then $\mu=(n-3,3)$, and if $\dim_k(D^\mu)=1024$, then $\mu=(11,10)$ or $(12,10)$. If $(n,\dim_k(D^\mu))\neq (21,1024)$, then $W=D^\mu\downarrow N$ by \cite[Theorem 1.1]{Ben1988}, so $\dim_k(D^\mu)\leq \dim_k(V)\leq \lfloor g(2,n)\rfloor$. However, it can be verified that $\lfloor g(2,n)\rfloor<\dim_k(D^\mu)$ in each case, a contradiction. Similarly, if  $n=21$ and $\dim_k(D^\mu)=1024$, then $D^\mu\downarrow A_n$  is irreducible over $k=\mathbb{F}_2$ by \cite[Theorems 5.1 and 6.1]{Ben1988}, in which case $\dim_k(D^\mu)\leq \dim_k(V)\leq \lfloor g(2,21)\rfloor=697$, a contradiction.

\begin{table}[!ht]
\centering
\begin{tabular}{ l | l l l l l}
\hline
$n$ & $21$, $22$ & $23$, $24$ & $25$, $26$ & $28$ & $30$ \\
$\dim_k(D^\mu)$ & 1024, 1120 & 1496 & 2000 & 2548 & 3248 \\
\hline 
\end{tabular}
\caption{Possible $\dim_k(D^\mu)$ when $q=2$ and $n\geq 21$}
\label{tab: big n}
\end{table} 

Thus $q$ is even and $15 \leq n\leq 20$. Note that if $q\geq 128$, then  $2\lfloor g(q,n)\rfloor \leq 2\lfloor g(128,n)\rfloor \leq f_2(n)$, a contradiction.  Moreover,   if $q\geq 8$, then $n\leq 18$; if $q\geq 16$, then $n=18$ or $n\leq 16$; if $q\geq 32$, then $n\leq 16$; and if $q\geq 64$, then $n=16$. Hence $(n,q)$ is listed in  Table \ref{tab: q even}.
\begin{table}[!ht]
\centering
\begin{tabular}{ l l }
\hline
$n$ & $q$   \\
\hline 
15 & 2, 4, 8, 16, 32 \\
16 & 2, 4, 8, 16, 32, 64 \\
17 & 2, 4, 8\\
18 & 2, 4, 8, 16 \\
  19, 20 & 2, 4 \\
\hline
\end{tabular}
\caption{Possible even $q$ when $15\leq n\leq 20$}
\label{tab: q even}
\end{table}
    
First suppose that $n=20$. Since $\dim_k(D^\mu)\leq 2g(2,n)\leq n^3$, we apply \cite{Mul2011} to determine that the only $D^\mu$ for which $f_2(n)<\dim_k(D^\mu)\leq 2\lfloor g(q,n)\rfloor$ are those with dimension $512$ or $780$ when $q=2$ and dimension $512$ when $q=4$. Moreover, if $\dim_k(D^\mu)=512$, then $\mu=(11,9)$, and if $\dim_k(D^\mu)=780$, then $\mu=(17,3)$.  
If $q=2$ and $\dim_k(D^\mu)=780$, then $W=D^\mu\downarrow N$ by \cite[Theorem 1.1]{Ben1988}, and so $\dim_k(D^\mu)\leq \dim_k(V)\leq \lfloor g(2,20)\rfloor=620$, a contradiction. Thus $\dim_k(D^\mu)=512$ and $q=2$ or $4$, in which case $D^\mu\downarrow A_n$  is irreducible if and only if $q=2$ by \cite[Theorems 5.1 and 6.1]{Ben1988}. Thus either $q=2$ and $\dim_k(W)=512$, or $q=4$ and $\dim_k(W)=256$.

Similarly,  using the Brauer character tables in \cite{SpinSym, GAP4, BAtlas}, we determine for each remaining $(n,q)$ in Table \ref{tab: q even} that if there exists a $2$-regular partition $\mu$ for which $f_2(n)<\dim_k(D^\mu)\leq 2\lfloor g(q,n)\rfloor$ and either $D^\mu\downarrow N$ splits, or $W=D^\mu\downarrow N$ and $\lfloor g(q,n)\rfloor\geq \dim_k(D^\mu)$, then $(n,\mu,q,\dim_k(W),\dim_k(D^\mu))$ is listed in Table \ref{tab: reduction}. 
\end{proof}

Now we are in a position to determine the regular orbits of $S_n\times\mathbb{F}_p^*$ on $\mathbb{F}_pS_n$-modules not in $R_n(2)$. We also prove some results for $\mathbb{F}_pS_n$-modules  in $R_n(2)\setminus R_n(1)$ when $n$ is small, as the inclusion of these cases   simplifies the proof.   

\begin{prop}
\label{regular Sn}
Let $G$ be a group for which $S_n\leq G\leq S_n\times \mathbb{F}_p^*$  where  $n\geq 7$ and $p$ is a prime such that $p\leq n$. Let $\mu$ be a $p$-regular partition of $n$ and $V$ the $\mathbb{F}_pS_n$-module $D^\mu$.
\begin{itemize}
\item[(i)] If $D^\mu\notin R_n(2)$, then $G$ has no regular orbits on $V$ if and only if $p=2$ and $\mu=(\lfloor n/2\rfloor+1,\lfloor (n-1)/2\rfloor )$ for $7\leq n\leq 10$ or $n=12$.
\item[(ii)] If $D^\mu\in R_n(2)\setminus R_n(1)$ where either $n\leq 11$, or $12\leq n\leq 14$ and $p=2$, then $G$ has no regular orbits on $V$ if and only if $p=2$ and $\mu=(n-2,2)$ for $7\leq n\leq 8$. 
\end{itemize}
\end{prop}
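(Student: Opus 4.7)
The plan is to reduce, via dimension bounds, to a finite list of candidate pairs $(n,\mu)$ and then decide each one by explicit computation. Since every field is a splitting field for $S_n$ and the $\mathbb{F}_p^*$-factor of $G$ acts by $\mathbb{F}_p$-scalar multiplication, $V = D^\mu$ is absolutely irreducible as an $\mathbb{F}_pG$-module with $\End_{\mathbb{F}_pG}(V) = \mathbb{F}_p$. In the notation of Lemma \ref{An reduction}, this means $k = \mathbb{F}_p$, $q = p$, and $G$ is almost quasisimple with $F^*(G)' = A_n$.

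For part (i) with $n \geq 11$, I would apply Lemma \ref{An reduction}: for odd $p$ this forces the single candidate $(n, p, \dim_{\mathbb{F}_p} V) = (11, 5, 55)$, while $p = 2$ with $n \geq 15$ restricts $(n, \mu)$ to the $q = 2$ rows of Table \ref{tab: reduction}. The intermediate range $11 \leq n \leq 14$ with $p = 2$ I would handle by combining the dimension bound of Lemma \ref{bounds} with the Brauer character tables of $S_n$ in \cite{SpinSym, GAP4}, yielding a short list of $2$-regular $\mu$ with $\mu_1 < n - 2$ whose $D^\mu$ has dimension within the bound. For $7 \leq n \leq 10$, the Brauer Atlas \cite{BAtlas} supplies all candidates directly.

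For part (ii), the hypothesis $D^\mu \in R_n(2) \setminus R_n(1)$ forces $\mu_1 = n - 2$ or $m(\mu)_1 = n - 2$ (these conditions coincide when $p = 2$). Within the stated range of $n$, the Brauer character tables in \cite{BAtlas, SpinSym, GAP4} enumerate finitely many such $\mu$. For each remaining candidate $(n, p, \mu, G)$ from either part, I would use Magma (cf.\ \S \ref{s: comp}) to decide whether $G$ has a regular orbit on $V$. The non-regular cases should coincide with the rows of Table \ref{tab: Sn ex} consistent with the hypothesis of the relevant part, producing the claimed exceptions: $p = 2$ with $\mu = (\lfloor n/2 \rfloor + 1, \lfloor (n-1)/2 \rfloor)$ for $n \in \{7,8,9,10,12\}$ in (i), and $p = 2$ with $\mu = (n - 2, 2)$ for $n \in \{7,8\}$ in (ii).

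The principal obstacle is the computational cost for the higher-dimensional entries of Table \ref{tab: reduction}, notably $(n, p, \dim V) = (20, 2, 512)$, where $|V|$ is astronomical and exhaustive orbit enumeration is infeasible. For such cases I would apply Lemma \ref{strong bound} directly: compute $|C_V(g)|$ and $|g^G|$ for representatives $g$ of the conjugacy classes of non-central prime-order elements of $G$, concluding that a regular orbit exists whenever $\sum_g |g^G|\,|C_V(g)| < |V|$. The few candidates for which this estimate is inconclusive I would settle by direct construction of a vector with trivial $G$-stabiliser, or by further structural analysis of $V$ using its realisation as a quotient of a Specht module.
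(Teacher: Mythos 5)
Your proposal matches the paper's argument closely: both parts are reduced to a finite list of candidate pairs $(n,\mu)$ via Lemma \ref{An reduction} (for $p=2$, $n\geq 15$ and for odd $p$, $n\geq 11$), the dimension bound (\ref{even better bound}) of Lemma \ref{bounds} together with Brauer character tables (for the intermediate and small $n$), and each surviving candidate is then settled by machine computation (with Lemma \ref{strong bound} doing the work for the high-dimensional cases, exactly as you propose, and {\sc Orb} for the two hardest $n=12$ computations). The one shortcut you do not mention, but which is merely an efficiency gain rather than a logical necessity, is that for $\mu=(8,7),(9,8),(10,9)$ at $n=15,17,19$ (and for a few entries of Table \ref{tab: all dim}) the paper notes $D^\mu=D^\lambda\downarrow S_n$ by \cite[Theorem 9.3]{JamG1978}, so a regular orbit is inherited from the already-treated $S_{n+1}\times\mathbb{F}_2^*$ case, avoiding three large direct computations.
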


\begin{proof}
We will prove (i) and (ii) simultaneously. Therefore,  we will  assume throughout this proof that either  $D^\mu\notin R_n(2)$, or $D^\mu\in R_n(2)\setminus R_n(1)$ and either $n\leq 11$, or $12\leq n\leq 14$ and $p=2$. In particular,  $D^\mu$ is faithful. Note that  $\End_{\mathbb{F}_pG}(V)=\mathbb{F}_p$.

  Suppose that $G$ does not have a regular orbit  on $V$. First consider the case where $p=2$ and $n\geq 15$.  Lemma \ref{An reduction} implies that $\mu$ and $\dim_{\mathbb{F}_2}(D^\mu)$ are listed in Table \ref{tab: reduction}. If $\mu$ is   $(9,7)$, $(13,3)$,  $(10,8)$ or $(11,9)$,
  then using {\sc Magma} (cf. \S \ref{s: comp} for further details), we determine that $S_n\times \mathbb{F}_p^*$ has a regular orbit on $V$, a contradiction. Otherwise,  $\mu$ is $(8,7)$, $(9,8)$ or $(10,9)$, in which case $D^\mu=D^\lambda\downarrow S_n$ where $\lambda$ is $(9,7)$,  $(10,8)$ or $(11,9)$ respectively by \cite[Theorem 9.3]{JamG1978}, so $G$ has a regular orbit on $V$, a contradiction.

  Next suppose that either $p=2$ and $n\leq 14$, or $p$ is odd.
  We claim that $(n,p,\dim_{\mathbb{F}_p}(D^\mu))$ is listed in Table \ref{tab: all dim}. 
   Note that if $p$ is odd, then $n\leq 11$ by Lemma \ref{An reduction}.
 Let
$$g(p,n):= \tfrac{n}{2}\log_p{(2n!(p-1))}.$$
Since $Z(G)\leq \mathbb{F}_p^*$ by Lemma \ref{scalar} and $p\leq n$,  equation (\ref{even better bound}) of Lemma \ref{bounds} implies that $\dim_{\mathbb{F}_p}(V)\leq \lfloor g(p,n)\rfloor $.  Note that if $U$ is an irreducible $\mathbb{F}_pS_n$-module such that $U\in R_n(1)$ but $U$ does not have dimension 1, then  $\dim_{\mathbb{F}_p}(U)$ is either  $n-2$ when $p\mid n$, or $n-1$ when $p\nmid n$ (cf.\ \S \ref{s: Rn(1)}).
Hence by \cite{BAtlas,GAP4}, the dimensions of those $D^\mu$ for which $D^\mu\notin R_n(1)$ and $\dim_{\mathbb{F}_p}(V)\leq \lfloor g(p,n)\rfloor$ are precisely those listed in Table \ref{tab: all dim}, proving the claim.

\begin{table}[!h]
\centering
\begin{tabular}{ l l l  }
\hline
$n$ &  $p$  & $\dim_{\mathbb{F}_p}(D^\mu)$   \\
\hline
7 & 2 &  $8^\times$, $14^\times$, 20 \\
& 3 &  $13$,  $15$, $20$\\
 & 5 &  $8$,  $13$, $15$,  $20$ \\
  & 7 &  $10$,  $14$\\

8  & 2 &  $8^\times$, $14^\times$, 40, 64 \\
 & 3 &  $13$, $21$, $28$,  $35$ \\
& 5 &   $13$,  $20$,  $21$\\
 & 7 &   $14$,  $19$,  $21$ \\

9 & 2 &  $16^\times$, 26, 40, 48, 78 \\
 & 3 &   $21$,  $27$,  $35$,  $41$ \\
& 5 &   $21$, $27$, $28$,  $34$\\
 & 7 &  $19$,  $28$ \\

 \hline
\end{tabular}
\quad
\begin{tabular}{ l l l  }
\hline
$n$ & $p$  & $\dim_{\mathbb{F}_p}(D^\mu)$     \\
\hline 
10 & 2 &  $16^\times$, 26, 48  \\
 & 3 &  $34$,  $36$,  $41$  \\
 & 5 &  $28$, $34$, $35$ \\

  & 7 &  $35$,  $36$,  $42$ \\

11 & 2 &  32, 44,100,144 \\
& 3 &  $34$,  $45$ \\
 & 5 &  $43$, $45$,  $55$ \\
 & 7 &   $44$,  $45$   \\
 & 11 &  $36$,  $44$ \\
 
 12 & 2 & $32^\times$, 44, 100, 164\\
13 & 2 &  $64$,  208\\ 
14 & 2 &  $64$,  208  \\
\hline
\end{tabular}
\caption{Possible $\dim_{\mathbb{F}_p}(D^\mu)$}
\label{tab: all dim}
\renewcommand{\baselinestretch}{1.3}\selectfont
\end{table}

Suppose that  $\dim_{\mathbb{F}_p}(D^\mu)$ is listed in Table \ref{tab: all dim} with no adjacent $^\times $. 
Using {\sc Magma}, we determine that $S_n\times \mathbb{F}_p^*$ has a regular orbit on $V$,  a contradiction. All of these computations are routine except for  $n=12$ and $\dim_{\mathbb{F}_2}(V)=44$; in this  case we use  {\sc Orb} \cite{ORB}. Also, we do not require {\sc Magma}  when $p=2$ and $(n,\dim_{\mathbb{F}_p}(D^\mu))$ is one of $(11,44)$, $(11,100)$, or $(13,208)$, for in these cases  $D^\mu=D^\lambda\downarrow S_{n}$ where $\lambda$ is a $p$-regular partition  of $n+1$  such that $\dim_{\mathbb{F}_p}(D^\lambda)$ is  listed in Table \ref{tab: all dim} by \cite[Theorem 9.3]{JamG1978}.

Thus $\dim_{\mathbb{F}_p}(D^\mu)$ is listed in Table \ref{tab: all dim} with an adjacent $^\times $.  From the decomposition matrices in \cite{JamG1978},    $\mu=(\lfloor n/2\rfloor+1,\lfloor (n-1)/2\rfloor )$ when $\dim_{\mathbb{F}_2}(D^\mu)=2^{\lfloor (n-1)/2\rfloor}$ for $7\leq n\leq 10$ or $n=12$, and  $\mu=(n-2,2)$ when $\dim_{\mathbb{F}_2}(D^\mu)=14$ for $7\leq n\leq 8$, as desired. 

Conversely, suppose that $p=2$ and $\mu$ is either  $(n-2,2)$ for $7\leq n\leq 8$, or  $(\lfloor n/2\rfloor+1,\lfloor (n-1)/2\rfloor )$ for $7\leq n\leq 10$ or $n=12$. Note that $G=S_n$.
If $\mu=(6,2)$ or $(\lfloor n/2\rfloor+1,\lfloor (n-1)/2\rfloor )$ for $7\leq n\leq 10$, then $|V|<|G|$, so $G$ has no regular orbits on $V$.
If $\mu=(5,2)$, then no orbit is regular by {\sc Magma}, and if $\mu=(7,5)$, then no orbit is regular by {\sc Orb} \cite{ORB}.
\end{proof}

Now we consider the regular orbits of $A_n\times\mathbb{F}_p^*$.

\begin{prop}
\label{regular An}
Let $G$ be a group for which $A_n\leq G\leq A_n\times \mathbb{F}_p^*$  where  $n\geq 7$ and $p$ is a prime such that $p\leq n$. Let $V$ be a faithful irreducible $\mathbb{F}_pA_n$-module, and let $\mu$ be a $p$-regular partition of $n$ for which  $V\leq D^\mu\downarrow A_n$. Suppose that $D^\mu \notin R_n(2)$.
\begin{itemize}
 \item[(i)] If $V\neq D^\mu\downarrow A_n$, then $G$ has no regular orbits on $V$ if and only if $p=2$ and $\mu $ is $(5,3,1)$ or $(\lfloor n/2\rfloor+1,\lfloor (n-1)/2\rfloor )$ for $7\leq n\leq 9$.  
 \item[(ii)] If    $V= D^\mu\downarrow A_n$, then $G$ has no regular orbits on $V$ if and only if $p=2$ and $\mu=(6,4)$. 
 \end{itemize}
 \end{prop}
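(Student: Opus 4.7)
The plan mirrors that of Proposition~\ref{regular Sn}: reduce via the bounds of Section~\ref{s: bounds}, apply Lemma~\ref{An reduction} to confine $n$ to a finite range, and then dispose of the remaining cases via Brauer character tables and direct {\sc Magma} (or {\sc Orb}) computations. Assume $G$ has no regular orbit on $V$, and put $k := \End_{\mathbb{F}_p A_n}(V)$ and $q := |k|$. Since $G/A_n$ acts on $V$ by scalars from $\mathbb{F}_p^*$ (Lemma~\ref{scalar}), $k$ also equals $\End_{\mathbb{F}_p G}(V)$, and the natural $kG$-structure makes $V$ absolutely irreducible while $V$ itself is already irreducible on restriction to $kA_n$. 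Lemma~\ref{index 2} then yields $\dim_k V = \dim_k D^\mu$ when $V = D^\mu\downarrow A_n$, and $\dim_k V = \tfrac12\dim_k D^\mu$ when $D^\mu\downarrow A_n$ splits.

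For $n\geq 11$ I would invoke Lemma~\ref{An reduction} directly: because $D^\mu\notin R_n(2)$ by hypothesis, its output forces $(n,q)=(11,5)$ with $\dim_k V = 55$ when $q$ is odd, and otherwise $p = 2$, $n\leq 20$, with $(n,\mu,q,\dim_k V,\dim_k D^\mu)$ listed in Table~\ref{tab: reduction}. For $7\leq n\leq 10$ I would instead apply equation~(\ref{even better bound}) of Lemma~\ref{bounds} with $|Z(G)|\leq p-1<n$, giving $\dim_{\mathbb{F}_p}V \leq \lfloor (n/2)\log_p(2n!(p-1))\rfloor$; intersecting with the list of faithful irreducible $\mathbb{F}_pA_n$-modules on which $D^\mu\notin R_n(2)$ (obtained from \cite{BAtlas,GAP4}) yields a short table of candidate dimensions, now permitting $V$ to be a proper half of $D^\mu\downarrow A_n$ as well as the full restriction.

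Each remaining candidate is then disposed of in one of two ways. When $V = D^\mu\downarrow A_n$ is irreducible (part~(ii)), the orbit computations carried out for Proposition~\ref{regular Sn} on $D^\mu$ transfer directly: every regular orbit of $S_n\times\mathbb{F}_p^*$ on $D^\mu$ restricts to a regular orbit of $G\leq A_n\times\mathbb{F}_p^*$ on $V$, so the only new exception is $\mu=(6,4)$ at $p=2$. When instead $D^\mu\downarrow A_n$ splits (part~(i)), I would decide the splitting by combining \cite[Theorems 5.1 and 6.1]{Ben1988} with \cite{BAtlas,GAP4,SpinSym}, then compute orbits of $A_n\times\mathbb{F}_p^*$ on $V$ in {\sc Magma} (or in {\sc Orb} for the larger modules in Table~\ref{tab: reduction}). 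For the converse, three of the four exceptional pairs of~(i)---namely $\mu \in \{(4,3),(5,3),(5,4)\}$---and the exceptional pair of~(ii) satisfy $|V|<|A_n|$, ruling out a regular orbit by cardinality alone; the remaining case $\mu=(5,3,1)$ with $\dim_{\mathbb{F}_2}V=20$ is settled by a {\sc Magma} computation.

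The principal obstacle is bookkeeping: for every row of Table~\ref{tab: reduction} and every candidate dimension from the $7\leq n\leq 10$ analysis one must determine whether $D^\mu\downarrow A_n$ splits, and when it does, test the half-sized constituent for a regular orbit of $G$. The most delicate verifications are those in the $n\in\{17,\ldots,20\}$ rows of Table~\ref{tab: reduction}, where $\dim_k V$ can reach $256$ or $512$ and naive orbit enumeration is infeasible, so targeted orbit searches via {\sc Orb} will be required.
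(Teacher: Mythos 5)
Your strategy mirrors the paper's: reduce via the bounds of Section~\ref{s: bounds} and Lemma~\ref{An reduction}, then finish with character-table data and direct orbit computations. But there is a concrete coverage gap. You split the reduction as ``$n\geq 11$: invoke Lemma~\ref{An reduction}'' versus ``$7\leq n\leq 10$: apply equation~(\ref{even better bound}).'' However, for $q$ even Lemma~\ref{An reduction} only produces the quintuples of Table~\ref{tab: reduction} when $n\geq 15$; for $11\leq n\leq 14$ it yields no information beyond $n\leq 20$. Since in part~(i) the splitting of $D^\mu\downarrow A_n$ over every extension field forces $k=\End_{\mathbb{F}_pG}(V)=\mathbb{F}_p$ (so $q=p$, and $q$ even means $p=2$), the case $11\leq n\leq 14$, $p=2$ is genuinely live and entirely absent from your argument. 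The paper handles this band, together with $7\leq n\leq 10$, by bounding $\dim_{\mathbb{F}_p}(D^\mu)=2\dim_{\mathbb{F}_p}(V)\leq \lfloor h(p,n)\rfloor$ with $h(p,n):=n\log_p(n!(p-1)/2)$ (using $r(G)=r(A_n)\leq n/2$ and $|G|\leq n!(p-1)/2$, which is tighter than (\ref{even better bound})), then verifying via \cite{BAtlas,GAP4,SpinSym} that for $11\leq n\leq 14$, $p=2$ no $D^\mu\notin R_n(2)$ splits within the bound. You need a comparable step.

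Two smaller remarks. First, for part~(ii) your transfer argument is correct in principle---$G$ having no regular orbit on $V=D^\mu\downarrow A_n$ implies $S_n\times\mathbb{F}_p^*$ has none on $D^\mu$, so Proposition~\ref{regular Sn}(i) applies---but you should state that among the resulting $\mu=(\lfloor n/2\rfloor+1,\lfloor (n-1)/2\rfloor)$ for $n\in\{7,8,9,10,12\}$, only $(6,4)$ and $(7,5)$ restrict irreducibly to $A_n$, and that $(7,5)$ is eliminated by a direct computation, leaving $(6,4)$. Second, your worry about $\dim_k V$ reaching $512$ for $n\in\{17,\ldots,20\}$ is misplaced: in part~(i), $k=\mathbb{F}_2$ rules out the $q=4$ rows of Table~\ref{tab: reduction}, and the remaining $q=2$ rows for $n\geq 18$ have $\dim_k(W)=\dim_k(D^\mu)$ so do not split; the only surviving entries give $\dim_k V\leq 256$, and part~(ii) at those $n$ is already disposed of by the reduction to Proposition~\ref{regular Sn}.
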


\begin{proof}
(i) By Lemma \ref{index 2},  $D^\mu\downarrow A_n= V\oplus Vg$ for every $g\in S_n\setminus A_n$. Now $\End_{\mathbb{F}_pG}(V)=\mathbb{F}_p$ since for every field $F$ of characteristic $p$, the irreducible $FS_n$-module $D^\mu \otimes_{\mathbb{F}_p}F$ restricted to $A_n$ is $(V\otimes_{\mathbb{F}_p}F)\oplus (V\otimes_{\mathbb{F}_p}F)g$, and so $V\otimes_{\mathbb{F}_p}F$ must be irreducible by Lemma \ref{index 2}. Note that $G$ has a regular orbit on $V$ if and only if $G$ has a regular orbit on $Vg$.

Suppose that $G$ does not have a regular orbit on $V$. We claim that $(n,p,\dim_{\mathbb{F}_p}(D^\mu))$ is listed in Table \ref{tab: An mag}. 
First suppose that $p=2$ and $n\geq 15$. 
Lemma \ref{An reduction} implies that $\dim_{\mathbb{F}_2}(D^\mu)$ is listed in Table \ref{tab: reduction}  where $q=2$, $W=V$ and $k=\mathbb{F}_2$, so the claim holds. 

\begin{table}[!h]
\centering
\begin{tabular}{ l  l  l  l l l l l l l ll }
\hline
$(n,p)$ & (7,2)  & (8,2) & (9,2) & (9,5) & (10,2) & (10,5)  & (15,2) & (16,2) & (17,2) \\
$\dim_{\mathbb{F}_p}(D^\mu)$ & 8$^\times$  & 8$^\times$, 40 & 16$^\times$, 40$^\times$ & 70 & 128 & 70 & 128 & 128 & 256 \\
\hline
\end{tabular}
\caption{Possible  $\dim_{\mathbb{F}_p}(D^\mu)$ when $V\neq  D^\mu\downarrow A_n$}
\label{tab: An mag}
\end{table}

Now suppose that either $p=2$ and $n\leq 14$, or $p$ is odd. Note that if $p$ is odd, then $n\leq 11$ by Lemma \ref{An reduction}. Let $$h(p,n):=n\log_{p}(n!(p-1)/2).$$ 
 By Lemma \ref{scalar}, $Z(G)\leq \mathbb{F}_p^*$, so Lemma \ref{general bound} implies that $\dim_{\mathbb{F}_p}(V)\leq r(G)\log_{p}(n!(p-1)/2)$. Since $r(G)=r(A_n)\leq n/2$ by  \cite[Lemma 6.1]{GurSax2003}, and since $\dim_{\mathbb{F}_p}(V)=\dim_{\mathbb{F}_p}(D^\mu)/2$, we obtain that $\dim_{\mathbb{F}_p}(D^\mu)\leq\lfloor h(p,n)\rfloor$.  By \cite{BAtlas,GAP4,SpinSym},  the dimensions of those $D^\mu$ for which $D^\mu\downarrow A_n$ splits (over $\mathbb{F}_p$) are precisely those listed in Table \ref{tab: An mag}, proving the claim.
 
Suppose that  $\dim_{\mathbb{F}_p}(D^\mu)$ is listed in Table \ref{tab: An mag} with no adjacent $^\times $. Using {\sc Magma}, we determine that $A_n\times \mathbb{F}_p^*$ has a regular orbit on $V$,  a contradiction.
Thus $\dim_{\mathbb{F}_p}(D^\mu)$ is listed in Table \ref{tab: An mag} with an adjacent $^\times $.  Using the decomposition matrices in \cite{JamG1978}, we determine that   $\mu$   is $(5,3,1)$ when $\dim_{\mathbb{F}_2}(D^\mu)=40$ and $(\lfloor n/2\rfloor+1,\lfloor (n-1)/2\rfloor )$ when $\dim_{\mathbb{F}_2}(D^\mu)=2^{\lfloor (n-1)/2\rfloor}$ for $7\leq n\leq 9$, as desired.

Conversely, suppose that $p=2$ and $\mu=(\lfloor n/2\rfloor+1,\lfloor (n-1)/2\rfloor )$ for $7\leq n\leq 9$ or $(5,3,1)$. Note that $G=A_n$. If $\mu\neq (5,3,1)$, then $|V|<|G|$, so $G$ does not have a regular orbit on $V$, and if $\mu=(5,3,1)$, then we use {\sc Magma}  to check no orbit is regular. 

(ii)   If $G$ has no regular orbits on $V$, then $S_n\times\mathbb{F}_p^*$ has no regular orbits on $D^\mu$, so  $p=2$ and $\mu=(\lfloor n/2\rfloor+1,\lfloor (n-1)/2\rfloor )$ for $7\leq n\leq 10$ or $n=12$ by Proposition \ref{regular Sn}. In particular, $G=A_n$. Since $D^\mu\downarrow A_n$ is irreducible, the only possibilities for  $\mu$ are $(6,4)$ or $(7,5)$. If  $\mu=(7,5)$, then we use {\sc Magma}  to find a regular orbit of $G$ on $V$, a contradiction. Hence $\mu=(6,4)$, in which case $|V|<|G|$, so $G$ does not have a regular orbit on $V$.
\end{proof}

\subsection{Modules in $R_n(2)\setminus R_n(1)$}
\label{s: Rn(2)}

In this section, it is natural to work over an arbitrary field, and we obtain the following more general result for  modules in $R_n(2)\setminus R_n(1)$. 

\begin{prop}
\label{Rn(2) n>11}
Let $H$ be  $S_n$ or $A_n$ where $n\geq 5$. Let $G:= H\times A$ where $F$ is a field and $A$ is a finite subgroup of $F^*$. Let $V$ be a faithful irreducible $FH$-module  where  $V\leq D^\mu\downarrow H$ and $D^\mu\in R_n(2)\setminus R_n(1)$. If $n\geq 12$, then $G$ has a regular orbit on $V$.
\end{prop}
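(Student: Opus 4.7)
The plan is as follows. By replacing $\mu$ with $m(\mu)$ and $V$ with its associate $V\otimes_F\sgn$ if necessary---an operation which preserves the existence of regular orbits for $G$---we may arrange that $\mu_1=n-2$, so that $\mu\in\{(n-2,2),(n-2,1,1)\}$. If $F$ is infinite, Lemma \ref{dag} immediately gives a regular orbit (a finite union of proper subspaces cannot exhaust a vector space over an infinite field), so we may assume $F$ is finite with $q:=|F|$ and $|A|\leq q-1$.

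The goal is then to apply the counting bound of Lemma \ref{strong bound}: writing $d:=\dim_F V$, it suffices to prove that
$$
\sum_{g\in X}|g^G|\cdot q^{\dim_F C_V(g)} \ <\ q^d,
$$
where $X$ runs over conjugacy class representatives of the non-central prime-order elements of $G=H\times A$.

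I would carry this out in two steps. First, I would establish a lower bound on $d$. Since $V\leq D^\mu\downarrow H$, Lemma \ref{index 2} yields $d\geq \tfrac12\dim_F D^\mu$; from James's decomposition matrices in \cite{JamG1978}, the dimensions of $D^{(n-2,2)}$ and $D^{(n-2,1,1)}$ are each bounded below by an explicit quadratic polynomial in $n$, roughly $\tfrac12 n^2-O(n)$. Second, I would upper bound $\dim_F C_V(g)$ using that $V$ is a constituent of $D^\mu\downarrow H$ and $D^\mu$ is a subquotient of the Young permutation module $M^\mu$; a short linear-algebra argument (tracking $(g-1)^{-1}(N)$ in the filtration $S^{\mu\perp}\cap S^\mu\leq S^\mu\leq M^\mu$) yields
$$
\dim_F C_V(g) \ \leq\ \dim_F C_{M^\mu}(g),
$$
and the right-hand side equals the number of orbits of $\langle g\rangle$ on the natural basis of $M^\mu$ (the 2-subsets of $\{1,\dots,n\}$ for $\mu=(n-2,2)$, and the analogous basis for $(n-2,1,1)$). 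This orbit count admits a closed combinatorial form in the cycle type of $g$.

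With these estimates in place, the summation splits naturally by the conjugacy class of the $H$-component of $g$, and the dominant contributions come from the short-support elements (transpositions, double transpositions, 3-cycles). The main obstacle is the case $q=2$ and $H=S_n$, where the $\binom{n}{2}$ transpositions---with relatively large fixed subspaces---give the tightest comparison with $q^d$. For these elements the generic bound of Lemma \ref{r(g)} is far too weak, and one must use the sharper combinatorial expression for $\dim_F C_{M^\mu}(g)$ together with the explicit lower bound on $d$ coming from James's tables. The hypothesis $n\geq 12$ is precisely what guarantees enough separation between $d$ and the $\dim_F C_{M^\mu}(g)$ to overcome the polynomial growth of the short-support class sizes, uniformly across all finite fields $F$.
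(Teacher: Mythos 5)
Your proposal takes a genuinely different route from the paper, and unfortunately the difference is fatal for the small values of $n$ that the proposition must cover.

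Your plan is a counting argument: reduce to $\mu\in\{(n-2,2),(n-2,1,1)\}$, dispose of infinite $F$ via Lemma~\ref{dag}, then apply Lemma~\ref{strong bound} with the estimate $\dim_F C_V(g)\leq \dim_F C_{M^\mu}(g)$ and a lower bound on $d$. The reduction and the subquotient inequality are both fine (for a quotient $V=W/U$ of $g$-modules one has $\dim C_V(g)\leq \dim C_W(g)$ by a rank count on $g-1$, and for a submodule it is trivial). The problem is that the resulting inequality simply is not true for the modules in question. Take $H=S_{12}$, $F=\mathbb{F}_2$, $\mu=(10,2)$, so $V=D^\mu$ with $d=44$ and $M^\mu=\mathbb{F}_2^{\binom{12}{2}}$. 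For a transposition $g=(12)$ the number of $\langle g\rangle$-orbits on $2$-subsets is $1+10+\binom{10}{2}=56$, so $\dim C_{M^\mu}(g)=56>44=d$ and your proposed bound is vacuous. Even the sharper bound from Lemma~\ref{r(g)} with $r(g)=n-1=11$ gives $\dim C_V(g)\leq 40$, and then the transposition contribution alone is $\binom{12}{2}\cdot 2^{40}=66\cdot 2^{40}>2^{44}=|V|$, so the inequality of Lemma~\ref{strong bound} fails. The same failure occurs for $H=A_{12}$ using double transpositions ($1485$ classes, each with $\dim C_V(g)\leq \lfloor 44\cdot 5/6\rfloor =36$, and $1485\cdot 2^{36}>2^{44}$), and persists over $\mathbb{F}_2$ for $n$ roughly up to $16$. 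Thus "$n\geq 12$ guarantees enough separation" is not correct: the proposition is true because a regular vector exists, not because the crude fixed-point count is small enough.

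The paper's proof is instead constructive and never invokes Lemma~\ref{strong bound} for this family. Viewing $S^{(n-2,2)}$ and $S^{(n-2,1,1)}$ as spaces of weighted graphs, it produces an explicit $s\in S^\mu$ whose underlying graph has trivial automorphism group, valency at most $4$, and at most $n+4$ edges. For any $1\neq g\in S_n$ and $\lambda\in F^*$, the underlying graph of $s-\lambda sg$ then has few edges and bounded valency, and the combinatorial Lemma~\ref{graph} supplies an alternating $4$-cycle $s'$ meeting that graph in exactly one edge, whence $\langle s-\lambda sg,\,s'\rangle\neq 0$ and $s-\lambda sg\notin S^{\mu\perp}$. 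Lemma~\ref{reg} converts this into a regular orbit of $S_n\times A$ on $D^\mu$ and $D^\mu\otimes_F\sgn$; Lemma~\ref{Rn(2) irr} then transfers the conclusion to $A_n$. This works for $n\geq 13$ and for $n=12$ with $|F|\neq 2$, and the one remaining case $n=12$, $|F|=2$ is discharged by the computer verification in Proposition~\ref{regular Sn}(ii). If you want to salvage a counting strategy, you would at minimum have to replace $\dim C_{M^\mu}(g)$ with an exact (or near-exact) formula for $\dim C_{D^\mu}(g)$ and still expect it to break down for $n$ near $12$ over $\mathbb{F}_2$; the constructive argument avoids this entirely.
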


Proposition \ref{Rn(2) n>11} extends Step 5 of the proof of \cite[Theorem 6]{HalLieSei1992}, which  exhibits a regular orbit of $A_n$ on modules in $R_n(2)\setminus R_n(1)$ for $n>30$. Our methods of proof are similar. Although Proposition \ref{Rn(2) n>11} is trivial when $F$ is an infinite field by Lemma \ref{dag}, we include such $F$ here since Lemma \ref{dag} only guarantees the existence of a regular orbit, whereas our proof is constructive.

For modules in $R_n(2)\setminus R_n(1)$, we are  primarily concerned with the partitions $(n-2,2)$ and $(n-2,1,1)$. For these partitions, the modules $M^\mu$ and $S^\mu$ can be understood most readily using graphs. We  assume a  familiarity with basic terminology from graph theory throughout this section.

If $\mu=(n-2,2)$, then $M^\mu$ is the permutation module of $S_n$ on the set of unordered pairs from $\{1,\ldots,n\}$, so the set of simple undirected graphs on $n$ vertices with  edges weighted by field elements is isomorphic to $M^\mu$ if we identify each unordered pair $\{i,j\}$ with the edge whose ends are $i$ and $j$. With this viewpoint, the Specht module $S^\mu$ is spanned by the  alternating $4$-cycles, which are graphs of the form $\{i,j\}-\{j,k\}+\{k,l\}-\{l,i\}$ for  distinct $i,j,k,l\in \{1,\ldots,n\}$. Observe that the sum of $\{1,2\}-\{2,3\}+\{3,4\}-\{4,1\}$ and $ \{1,4\}-\{4,5\}+\{5,6\}-\{6,1\}$ is the alternating $6$-cycle $\{1,2\}-\{2,3\}+\{3,4\}-\{4,5\}+\{5,6\}-\{6,1\}$. Continuing in this way, we conclude that $S^\mu$ contains every alternating $2m$-cycle for $m\geq 2$.

Similarly, if $\mu=(n-2,1,1)$, then $M^\mu$ is the permutation module of $S_n$ on the set of ordered pairs from $\{1,\ldots,n\}$, so the set of simple directed graphs on $n$ vertices with  edges weighted by field elements is isomorphic to $M^\mu$ if we identify each ordered pair $(i,j)$ with the edge whose tail  is $i$ and head  is $j$.  With this viewpoint, the Specht module $S^\mu$ is spanned by the  directed $3$-cycles, which are graphs of the form $(i,j)-(j,i)+(j,k)-(k,j)+(k,i)-(i,k)$ for  distinct $i,j,k\in \{1,\ldots,n\}$. Observe that the sum of $(1,2)-(2,1)+(2,3)-(3,2)+(3,1)-(1,3)$ and $(1,3)-(3,1)+(3,4)-(4,3)+(4,1)-(1,4)$ is the directed $4$-cycle $(1,2)-(2,1)+(2,3)-(3,2)+(3,4)-(4,3)+(4,1)-(1,4)$. Continuing in this way, we conclude that $S^\mu$ contains every directed $m$-cycle for $m\geq 3$.

\begin{lemma}
\label{Rn(2) irr}
 Let $F$ be a field, and suppose that $n\geq 7$. If $V$ is an $FS_n$-module in $R_n(2)\setminus R_n(1)$, then $V\downarrow A_n$ is irreducible.
 \end{lemma}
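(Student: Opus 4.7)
The plan is to use Clifford theory (Lemma \ref{index 2}) to reduce the irreducibility of $V\downarrow A_n$ to a self-associate condition on $\mu$, and then to verify that condition for the two candidate partitions. By Lemma \ref{index 2}, $V\downarrow A_n$ is either irreducible or splits as $W\oplus Wg$, where $W$ is an irreducible $FA_n$-submodule and $g\in S_n\setminus A_n$. Since $V\in R_n(2)\setminus R_n(1)$, after possibly replacing $V$ by its associate $V\otimes_F\sgn$ we may assume $V\simeq D^\mu$, where $\mu$ is a $p$-regular partition of $n$ with $\mu_1=n-2$ and $p:=\Char(F)$. Hence $\mu$ is either $(n-2,2)$ or, when $p\neq 2$, $(n-2,1,1)$.

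For $p\neq 2$, $D^\mu\otimes_F\sgn$ equals $D^{m(\mu)}$, and $D^\mu\downarrow A_n$ is irreducible precisely when $m(\mu)\neq\mu$. If $p=0$ or $p>n$, the Mullineux involution is simply partition conjugation, and one computes $(n-2,2)'=(2,2,1^{n-4})$ and $(n-2,1,1)'=(3,1^{n-3})$; both differ from $\mu$ once $n\geq 7$, e.g. by counting parts. For $3\leq p\leq n$, a direct run of the Mullineux algorithm on each candidate $\mu$ produces an image with four or more nonzero parts, again distinct from the two- or three-part partition $\mu$.

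For $p=2$, the sign module is trivial and the Mullineux criterion becomes vacuous, so a separate argument is required; in this case only $\mu=(n-2,2)$ arises. One appeals to the characterisation in \cite{Ben1988} of those $D^\mu$ which restrict reducibly to $A_n$ in characteristic $2$: the required parity condition on successive parts of $\mu$ (together with the smallest part) fails for $(n-2,2)$ once $n\geq 7$, so $D^{(n-2,2)}\downarrow A_n$ is irreducible.

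The main obstacle is that the clean Clifford-theoretic criterion ``$m(\mu)\neq\mu$'' is available only in odd characteristic, since in characteristic $2$ the sign module is trivial and $V\otimes_F\sgn\simeq V$ automatically; this case must be handled by an independent argument drawing on \cite{Ben1988}. Apart from this characteristic-two detour, the Mullineux and conjugate-partition computations for the two specific partitions $(n-2,2)$ and $(n-2,1,1)$ are routine.
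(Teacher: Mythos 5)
Your proposal takes a genuinely different route from the paper. The paper argues directly: assuming $D^\mu\downarrow A_n = W\oplus W(12)$, it observes $C_W((12))=0$, so $\dim_F([W,(12)])=\tfrac12\dim_F(D^\mu)\geq \tfrac14(n^2-5n+2)$ by James's lower bound, while $\dim_F([W,(12)])\leq \dim_F(M^\mu)-\dim_F(C_{M^\mu}((12)))$ and the fixed space of $(12)$ in $M^\mu$ is large; this forces $n=7$ for $\mu=(n-2,2)$ and $n\leq 11$ for $\mu=(n-2,1,1)$, and the residual cases are resolved from the known (Brauer) character tables. Your approach instead passes through Clifford theory and the Mullineux involution in odd characteristic and Benson's criterion in characteristic $2$. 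Conceptually this is cleaner and avoids the character-table lookups, and the implication ``$m(\mu)\neq\mu\Rightarrow D^\mu\downarrow A_n$ irreducible'' does hold over \emph{any} field (the ``precisely when'' you state is only an equivalence over a splitting field of odd characteristic, but you only use the correct direction). The appeal to \cite{Ben1988} for $p=2$ is also legitimate: $\mu_1-\mu_2=n-4\geq 3$ for $\mu=(n-2,2)$, so Benson's criterion gives absolute irreducibility, which descends to any field of characteristic $2$.

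There is, however, a concrete gap in the odd-characteristic step. You assert that a run of the Mullineux algorithm yields an image with at least four parts, ``again distinct from the two- or three-part partition $\mu$''. For $\mu=(n-2,1,1)$ and $p=3$ this is false: computing the Mullineux symbol, the first $p$-rim has $a_1=p+2=5$ cells over $r_1=3$ rows, with $\epsilon_1=1$ since $3\nmid 5$, so $m(\mu)$ has $s_1=a_1-r_1+\epsilon_1=p=3$ parts — exactly as many as $\mu$. The part-count comparison you rely on therefore fails at $p=3$. One still has $m(\mu)\neq\mu$ (the second symbol entry gives $a_2=3$ but $2r_2-\epsilon_2=2$, a mismatch), so the conclusion survives, but your proof as written does not cover this case and would need the extra verification. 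More broadly, the Mullineux computations for all primes $3\leq p\leq n$ are declared ``routine'' without being carried out; the $p=3$ example shows they are delicate enough that the claim cannot be waved through, and they are really the mathematical content of your argument. Once those computations are exhibited (distinguishing the cases $p\leq n-3$ and $p\in\{n-2,n-1,n\}$ where the first $p$-rim changes shape), your alternative proof would be complete.
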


\begin{proof}
For $n>30$, this is proved in Step 5 of the proof of \cite[Theorem 6]{HalLieSei1992}. We reproduce this proof here in order to deal with smaller $n$. Since the modules in $R_n(2)\setminus R_n(1)$ have the form $D^\mu$ or $D^\mu\otimes_F\sgn$ where $\mu$ is $(n-2,2)$ or $(n-2,1,1)$, and since $D^\mu\downarrow A_n\simeq D^\mu\otimes_F \sgn \downarrow A_n$, it suffices to assume that $V=D^\mu$ where $\mu$ is $(n-2,2)$ or $(n-2,1,1)$.

 Suppose for a contradiction that $D^\mu\downarrow A_n$ is not irreducible, and let $W$ be an irreducible $FA_n$-submodule of $D^\mu$. 
 Lemma \ref{index 2} implies that $D^\mu=W\oplus Wg$ where  $g=(12)\in S_n$.  Recall that $\dim_F([W,g])=\dim_F(W)-\dim_F(C_W(g))$. But $C_W(g)=0$ since $W\cap Wg=0$, and $ (n^2-5n+2)/2\leq \dim_F(D^\mu)$ by \cite[Appendix Table 1]{JamG1983},  
 so $ (n^2-5n+2)/4\leq \dim_F(D^\mu)/2=\dim_F([W,g])$.
 Moreover,  $\dim_F([W,g])\leq \dim_F([M^\mu,g])=\dim_F(M^\mu)-\dim_F(C^\mu)$ where $C^\mu:=C_{M^\mu}(g)$, and $\dim_F(M^\mu)$ is either $n(n-1)/2$ or $n(n-1)$ when $\mu$ is $(n-2,2)$ or $(n-2,1,1)$ respectively. Hence  $\dim_F(C^{(n-2,2)})\leq (n^2+3n-2)/4$ and $\dim_F(C^{(n-2,1,1)})\leq (3n^2+n-2)/4 $.
 
 Now we consider the dimension of $C^\mu$ and compare it to the upper bounds above to obtain a contradiction for all but the smallest $n$.   Suppose that $\mu=(n-2,2)$. 
 The graphs $\{1,2\}$,  $\{i,j\}$ and $\{1,i\}+\{2,i\}$ are fixed by $g$ for all $i,j\notin\{1,2\}$, and these form a linearly independent set in $M^\mu$, so $\dim_F(C^\mu)\geq 1+(n-2)(n-3)/2+(n-2)=(n^2-3n+4)/2$. But this is impossible unless $n=7$. Next suppose that $\mu=(n-2,1,1)$. The graphs $(i,j)$, $(1,2)+(2,1)$, $(1,i)+(2,i)$, and $(i,1)+(i,2)$ are fixed by $g$ for all $i,j\notin\{1,2\}$, and again these form a linearly independent set, so $\dim_F(C^\mu)\geq (n-2)(n-3)+1+(n-2)+(n-2)=n^2-3n+3$. But this is impossible unless $n\leq 11$.
  
 Hence either $n=7$ when $\mu=(n-2,2)$, or $7\leq n\leq 11$ when $\mu=(n-2,1,1)$. If $n=11$ and $\Char(F)=11$, then $\dim_{F}(D^\mu)=36$, and so $D^\mu\downarrow A_n$ is irreducible by \cite{BAtlas}. Otherwise, $D^\mu\downarrow A_n$ is irreducible by \cite{GAP4,SpinSym} (including the case $\Char(F)>n$ or $\Char(F)=0$).
\end{proof}
  
 Note that when $n=5$ or $6$, there are examples of $FS_n$-modules $V$ in $R_n(2)\setminus R_n(1)$ for which $V\downarrow A_n$ is not irreducible.

We will need the following technical result about  graphs. For a graph $\Gamma$, we denote the vertex set of $\Gamma$ by $V\Gamma$ and the edge set of $\Gamma$ by $E\Gamma$, and for $u\in V\Gamma$, we denote the valency of $u$ by $\Deg{u}$ and the set of vertices adjacent to $u$ by $\Gamma(u)$. We denote the complete graph with $\ell$ vertices by $K_\ell$ and the complete bipartite graph with parts of size $\ell$ and $\ell'$ by $K_{\ell,\ell'}$.
 
\begin{lemma} 
\label{graph}
Let $\Gamma$ be a finite simple undirected graph. Suppose that $|V\Gamma|\geq 12$ and $1\leq |E\Gamma|\leq 2|V|+8$, and suppose that the maximal valency of $\Gamma$ is at most $8$.
Either there exist distinct  $v_1,v_2,v_3,v_4\in V\Gamma$ such that  $\{v_1,v_2\}\in E\Gamma$ but  $\{v_2,v_3\},\{v_3,v_4\},\{v_4,v_1\}\notin E\Gamma$, or $|V\Gamma|=12$ and $\Gamma$ is one of $K_{4,8}$, $K_6\cup K_6$ or $K_5\cup K_7$.
\end{lemma}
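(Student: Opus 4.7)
The plan is to prove the contrapositive: assume that $\Gamma$ contains no four distinct vertices $v_1, v_2, v_3, v_4$ with $\{v_1, v_2\} \in E\Gamma$ and $\{v_2, v_3\}, \{v_3, v_4\}, \{v_4, v_1\} \notin E\Gamma$, and deduce that $n := |V\Gamma| = 12$ and $\Gamma \in \{K_{4,8},\, K_6 \cup K_6,\, K_5 \cup K_7\}$. Write $N[x] := \Gamma(x) \cup \{x\}$. As a preliminary, $\Gamma$ has no isolated vertex: if $w$ were isolated, then picking any edge $\{v_1, v_2\}$ of $\Gamma$ and setting $v_3 = w$, any $v_4$ in $V\Gamma \setminus (\Gamma(v_1) \cup \{v_1, v_2, w\})$ would yield a forbidden 4-tuple, and this set is nonempty since it has size at least $n - |\Gamma(v_1)| - 3 \geq n - 11 \geq 1$.

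Next I would extract the structural content of the hypothesis. For distinct $v_1, v_2, v_3$ with $\{v_1, v_2\} \in E\Gamma$ and $v_3 \notin N[v_2]$, the non-existence of a valid $v_4$ is exactly the statement $V\Gamma = N[v_1] \cup N[v_3]$. So the no-4-tuple hypothesis translates to the following key domination condition: for every edge $\{u, v\}$ of $\Gamma$ and every $w \in V\Gamma \setminus N[v]$, the pair $\{u, w\}$ closed-dominates $V\Gamma$; in particular $d(u) + d(w) \geq n - 3$ for all such $u, w$.

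I then handle the disconnected case. Suppose $\Gamma$ has connected components $C_1, \ldots, C_k$ with $k \geq 2$; by the first step each $|C_i| \geq 2$. Picking an edge $\{u, v\} \subseteq C_1$ and any $w \in C_2$ (automatically outside $N[v]$), the inclusions $N[u] \subseteq C_1$ and $N[w] \subseteq C_2$ combined with the domination force $k = 2$, $N[u] = C_1$, and $N[w] = C_2$. Letting $u$ range over $C_1$ and $w$ over $C_2$, every vertex of each component is adjacent to every other, so $\Gamma = K_a \cup K_b$ with $a + b = n$ and $a, b \geq 2$. The valency bound $\max(a, b) \leq 9$ and edge bound $\binom{a}{2} + \binom{b}{2} \leq 2n + 8$ leave exactly $(a, b) \in \{(5, 7), (6, 6)\}$ at $n = 12$ and no solution for $n \geq 13$, recovering $K_5 \cup K_7$ and $K_6 \cup K_6$.

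For the connected case, fix an edge $\{u, v\}$, set $X_x := V\Gamma \setminus N[x]$, and partition $X_u \cup X_v$ as $A_1 := X_u \cap X_v$, $A_2 := X_u \setminus X_v$, $A_3 := X_v \setminus X_u$. Applying the domination with $w$ ranging over $X_v$ shows that $A_1$ induces a clique, every vertex of $A_2$ is adjacent to every vertex of $A_1 \cup A_3$, and every vertex of $A_3$ is adjacent to every vertex of $A_1 \cup A_2$. The max-valency bound $8$ then gives $|A_1 \cup A_2 \cup A_3| \leq 9$ whenever $A_1 \neq \emptyset$, and forces $\min(d(u), d(v)) \geq n - 9$ whenever both $A_2$ and $A_3$ are non-empty. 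Propagating these edge-local constraints across the whole graph and combining with the global bound $|E\Gamma| \leq 2n + 8$ forces $\Gamma$ to be a complete bipartite graph $K_{r, s}$; the inequality $rs \leq 2(r + s) + 8$ with $r, s \leq 8$ and $r + s \geq 12$ leaves only $K_{4, 8}$ at $n = 12$. The main obstacle lies here: upgrading the edge-local decomposition into a global complete-bipartite structure demands careful bookkeeping, especially in the boundary cases $A_2 = \emptyset$ or $A_3 = \emptyset$ (where $N[v] \subseteq N[u]$ or vice versa) and in the narrow range $12 \leq n \leq 17$ where neither the edge bound nor the valency bound is immediately decisive, so a finer analysis tracking common neighbourhoods across pairs of adjacent edges is needed to close the argument.
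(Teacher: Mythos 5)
Your overall framework is sound: passing to the contrapositive, ruling out isolated vertices, and deriving the ``domination'' condition (for every edge $\{u,v\}$ and every $w\notin N[v]$, $N[u]\cup N[w]=V\Gamma$) are exactly the right moves. The disconnected case is complete and correct: you show each component is a clique, and the count $\binom{a}{2}+\binom{b}{2}\leq 2(a+b)+8$ with $a+b\geq 12$ and $a,b\leq 9$ leaves precisely $K_5\cup K_7$ and $K_6\cup K_6$ at $n=12$.

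The connected case, however, is not a proof; it is a sketch of an approach together with an acknowledgement that it does not close. You assert that ``propagating these edge-local constraints across the whole graph\dots forces $\Gamma$ to be a complete bipartite graph,'' and then immediately state that ``upgrading the edge-local decomposition into a global complete-bipartite structure demands careful bookkeeping\dots a finer analysis\dots is needed to close the argument.'' This is the crux of the lemma, and it is missing. The difficulty is real: fixing an arbitrary edge $\{u,v\}$ gives no control over $|A_1|$, $|A_2|$, $|A_3|$, and you have no handle to force $A_1=\varnothing$ or to rule out extra edges inside $N[u]\cap N[v]$ without extremality in the choice of vertices. The paper circumvents this entirely by a different, more concrete decomposition: it picks $a$ of \emph{minimum} nonzero valency (whence $\deg a\leq 5$, since $2|E\Gamma|\geq |V\Gamma|\deg a$), then $b\notin\Gamma(a)$ of \emph{maximum} valency, and splits on whether $C:=V\Gamma\setminus(\Gamma(a)\cup\Gamma(b)\cup\{a,b\})$ is empty. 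The extremality of $a$ and $b$ is what makes the cases collapse: when $C\neq\varnothing$ and $\Gamma(a)=\Gamma(b)$, every vertex of $C$ is forced to have the same valency as $a$ and $b$, and then edge-counting pins $\Gamma$ down to $K_{4,8}$ with no extra edges; when $C=\varnothing$, one gets the two disjoint cliques. Your edge-local decomposition lacks these extremal anchors, and without them I do not see how to rule out, say, a connected graph that is not complete bipartite but still avoids the forbidden configuration in the narrow band $12\leq n\leq 15$. To repair the proof, replace the arbitrary-edge setup by the paper's minimum/maximum valency choice, or supply the ``finer analysis'' you allude to in full.
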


\begin{proof}
Let $a\in V\Gamma$ have minimal non-zero valency. Note that if $u\in V\Gamma$ has valency 0, then since $|V\Gamma|\geq 12$ and $\Deg{a}\leq 8$, we may take  $v_4\in V\Gamma\setminus (\Gamma(a)\cup\{a,u\})$ along with $v_1=a$, $v_2\in \Gamma(a)$ and $v_3=u$. Thus we may assume that every vertex has non-zero valency. In particular, since $2|E\Gamma|=\sum_{v\in V\Gamma}\Deg{v}$, it follows that  $2|E\Gamma|\geq |V\Gamma|\Deg{a}$. Thus $\Deg{a}\leq 5$, or else $6|V\Gamma|\leq 2|E\Gamma|\leq 2(2|V\Gamma|+8)$, and so $|V\Gamma|\leq 8$, a contradiction.
Choose $b\in V\Gamma\setminus \Gamma(a)$ with maximal valency. Let $A:=\Gamma(a)\setminus \Gamma(b)$, let $B:=\Gamma(b)\setminus \Gamma(a)$, and let $C:= V\Gamma \setminus (\Gamma(a) \cup \Gamma(b)\cup \{a,b\})$.

Suppose first that $C\neq \varnothing$. If $A\neq \varnothing$, then let $v_1=a$, $v_3=b$ and choose $v_2\in A$ and $v_4\in C$. By the symmetry of this argument, we may assume that $\Gamma(a)=\Gamma(b)$. Now $\Deg{a}=\Deg{b}$, but $a$ has minimal valency and $b$ has maximal valency in $V\Gamma\setminus \Gamma(a)$, so every vertex of $C$ has the same valency as $a$ and $b$. If there is an edge whose ends are both in $C$, then we may take  $v_1$ and $v_2$ to be the ends of this edge along with $v_3=b$ and $v_4=a$. Otherwise, every vertex of $C$ is adjacent to every vertex of $\Gamma(a)$.
Now every vertex of $\Gamma(a)$ has valency at least $|C|+2$, so $|C|\leq 6$, but $\Deg{a}\leq 5$, so  $|V\Gamma|=2+\Deg{a}+|C|\leq 13$. If $|V\Gamma|=13$, then  $\Gamma$ must contain a subgraph isomorphic to $K_{5,8}$, but $K_{5,8}$ has 40 edges, so $\Gamma$ has at least 40 edges,  contradicting our assumption that $|E\Gamma|\leq 34$. Similarly, if $|V\Gamma|=12$, then $\Gamma$ must contain a subgraph isomorphic to $K_{5,7}$ or $K_{4,8}$, but $K_{5,7}$ has 35 edges, $K_{4,8}$ has 32 edges, and  $|E\Gamma|\leq 32$, so $\Gamma$ must  be $K_{4,8}$. 

Thus we may assume  that $C$ is empty. Note that $|B|=|V\Gamma|-\Deg{a}-2\geq 12-5-2=5$. 
This implies that $|\Gamma(a)\cap \Gamma(b)|\leq 3$, so $|A|+|B|=|V\Gamma|-|\Gamma(a)\cap \Gamma(b)|-2\geq 12-3-2=7$.  Moreover, $A\neq \varnothing$ since  $\Deg{b}\leq 8$ and $|V\Gamma|\geq 12$.  If there is an edge that has one end in $A$ and the other in $B$, then we may take these ends to be $v_1$ and $v_2$ respectively along with $v_3=a$ and $v_4=b$, so we assume that there is no such edge. Suppose further that there exists $u\in \Gamma(a)\cap \Gamma(b)$. The vertex $u$ cannot be adjacent to every vertex of $A$ and $B$ or else $\Deg{u}\geq 9$. If $u$ is not adjacent to some vertex of $A$, then we take $v_1=a$, $v_2=u$, $v_3\in A\setminus \Gamma(u)$, and $v_4\in B$. Thus by symmetry we may assume that $\Gamma(a)\cap \Gamma(b)$ is empty, so that $\Gamma$ consists of exactly two connected components. 

If the component containing $a$ is not  complete, then we may choose distinct non-adjacent vertices $v_1,v_4\in A$  and take $v_2=a$ and $v_3=b$. By symmetry, we may assume that the components of $\Gamma$ are $K_{|A|+1}$ and $K_{|B|+1}$. Since $|A|\leq 5$ and $|B|\leq 8$,   the initial assumptions on $|V\Gamma|$ and $|E\Gamma|$ imply that $|V\Gamma|=12$, so $\Gamma$ is $K_6\cup K_6$ or $K_5\cup K_7$.
\end{proof}

For $s\in S^\mu$,  the \textit{underlying  graph} of $s$ is  either the  graph  $s$ with weights removed when $\mu=(n-2,2)$, or the graph $s$ with weights, direction and multiple edges removed when $\mu=(n-2,1,1)$. Thus the underlying graph of $s\in S^\mu$ is always a finite simple undirected graph. Recall that $ D^\mu=S^\mu/(S^\mu\cap S^{\mu\perp})$.

\begin{lemma}
\label{reg} 
Let $\mu$ be  $(n-2,2)$ or $(n-2,1,1)$, and suppose that $n\geq 12$. Let $F$ be a field for which $\mu$ is $\Char(F)$-regular, and let $A$ be a finite subgroup of $F^*$. If there exists $s\in S^\mu$ whose underlying graph has trivial automorphism group, maximal valency at most $4$,   and  at most $n+4$ edges when $n\geq 13$ or at most $14$ edges when $n=12$, then $S_n\times A$ has a regular orbit on  $D^\mu$ and $D^\mu\otimes_F \sgn$. 
\end{lemma}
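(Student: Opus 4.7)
The plan is to show that the image $\bar s$ of $s$ in $D^\mu$ (and the analogous image in $D^\mu \otimes_F \sgn$) has trivial stabiliser in $S_n \times A$. Suppose $(g,a)$ fixes $\bar s$, so that $a \cdot sg - s \in S^\mu \cap S^{\mu\perp}$. Let $\Gamma$ denote the underlying graph of $s$, and let $\Delta$ denote the underlying undirected graph of $a \cdot sg - s$. Since $sg$ inherits the valency and edge bounds of $s$, one has $E\Delta \subseteq E\Gamma \cup E(\Gamma g)$, so $\Delta$ has maximum valency at most $8$ and at most $2(n+4)$ edges when $n \geq 13$, or at most $28$ edges when $n = 12$. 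The sharp hypothesis bound $14$ (rather than $2n+8 = 32$ at $n=12$) is exactly calibrated so that $2 \cdot 14 = 28$ is strictly less than the edge counts $30, 31, 32$ of the three exceptional graphs $K_6 \cup K_6$, $K_5 \cup K_7$, $K_{4,8}$ in Lemma~\ref{graph}, ensuring those exceptional cases do not arise.

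If $\Delta$ is nonempty, Lemma~\ref{graph} yields distinct $v_1,v_2,v_3,v_4$ with $\{v_1,v_2\} \in E\Delta$ but $\{v_2,v_3\}, \{v_3,v_4\}, \{v_4,v_1\} \notin E\Delta$. I then contradict $a \cdot sg - s \in S^{\mu\perp}$ by pairing with a suitable element of $S^\mu$. For $\mu = (n-2,2)$, the alternating $4$-cycle $\{v_1,v_2\} - \{v_2,v_3\} + \{v_3,v_4\} - \{v_4,v_1\}$ pairs with $a \cdot sg - s$ to give precisely the coefficient of $\{v_1,v_2\}$, which is nonzero. For $\mu = (n-2,1,1)$, this partition is not $2$-regular, so $\Char(F) \neq 2$; moreover every generating directed $3$-cycle has the coefficient on $(j,i)$ equal to the negative of the coefficient on $(i,j)$, and this antisymmetry is inherited by all of $S^\mu$. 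Pairing $a \cdot sg - s$ with the directed $4$-cycle $(v_1,v_2)-(v_2,v_1)+(v_2,v_3)-(v_3,v_2)+(v_3,v_4)-(v_4,v_3)+(v_4,v_1)-(v_1,v_4) \in S^\mu$ therefore reduces the inner product to $2\lambda_{(v_1,v_2)}$, which is nonzero since $\{v_1,v_2\} \in E\Delta$ and $\Char(F) \neq 2$.

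Hence $\Delta$ is empty, so $a \cdot sg = s$. Comparing underlying graphs forces $\Gamma g = \Gamma$, so $g \in \operatorname{Aut}(\Gamma) = \{1\}$; then $as = s$ with $s \neq 0$ gives $a = 1$. (The same Lemma~\ref{graph} argument applied directly to $s$ shows $s \notin S^{\mu\perp}$, so $\bar s \neq 0$.) For the associate $D^\mu \otimes_F \sgn$ the stabiliser condition becomes $\sgn(g) a \cdot sg - s \in S^\mu \cap S^{\mu\perp}$, and the same argument with $\sgn(g) a$ in place of $a$ yields $g = 1$ and $a = 1$. The main technical obstacle is the inner-product detection step in the directed case, which crucially depends on the antisymmetry of $S^{(n-2,1,1)}$ and on $\Char(F) \neq 2$ (automatic from $\mu$ being $\Char(F)$-regular); the other delicate point is that the edge bound $14$ at $n = 12$ is just tight enough to avoid the three sporadic graphs produced by Lemma~\ref{graph}.
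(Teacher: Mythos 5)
Your proof is correct and follows essentially the same approach as the paper: apply Lemma~\ref{graph} to the underlying graph of $a \cdot sg - s$, then detect non-membership in $S^{\mu\perp}$ by pairing with an alternating $4$-cycle (for $(n-2,2)$) or a directed $4$-cycle (for $(n-2,1,1)$, using the antisymmetry of Specht elements and $\Char(F)\neq 2$). The only cosmetic difference is that you establish $s\notin S^{\mu\perp}$ by applying the graph lemma directly to $s$, whereas the paper deduces it from the fact that $S^{\mu\perp}$ is an $FS_n$-submodule.
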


\begin{proof}
We claim  it suffices to prove that  $s-\lambda sg\notin S^{\mu\perp}$ for all $1\neq g\in S_n$ and $\lambda\in F^*$. Suppose that this occurs.
Then $s\not\in S^{\mu\perp}$ since $S^{\mu\perp}$ is an $FS_n$-submodule of $M^\mu$. If $(s+S^\mu\cap S^{\mu\perp})g\lambda =s+S^\mu\cap S^{\mu\perp}$ for some $g\in S_n$ and $\lambda\in A$, then $s-\lambda sg\in S^{\mu\perp}$, so $g=1$. But $s\not\in S^{\mu\perp}$, so $\lambda=1$. Hence $S_n\times A$ has a regular orbit on $D^\mu$. Moreover, if $(s+S^\mu\cap S^{\mu\perp}\otimes 1)g\lambda =s+S^\mu\cap S^{\mu\perp}\otimes 1$ for some $g\in S_n$ and $\lambda\in A$, then either $g\in A_n$ and $s-\lambda sg\in S^{\mu\perp}$, or $g\in S_n\setminus A_n$ and $s+\lambda sg\in S^{\mu\perp}$. If the latter  holds, then $g=1$, but this is ridiculous since $g\notin A_n$, so the former  holds. Again $g=1$, so $\lambda=1$. Hence $S_n\times A$ has a regular orbit on $D^\mu\otimes_F \sgn$, and the claim is proved.

Fix $1\neq g\in S_n$ and $\lambda\in F^*$. Now $s-\lambda sg\neq 0$, or else $g$ is a non-trivial automorphism of the underlying graph of $s$. Moreover, the underlying graph $\Gamma$ of $s-\lambda sg$ has at most $2n+8$ edges when $n\geq 13$ or at most $28$ edges when $n=12$,  and its vertices have valency at most 8. Note that if $n=12$, then $\Gamma$ cannot be $K_{4,8}$, $K_6\cup K_6$ or $K_5\cup K_8$, as these graphs have too many edges. Hence Lemma \ref{graph} implies that there exist distinct vertices $i,j,k,l$ such that $\{i,j\}$ is an edge of $\Gamma$ but $\{j,k\}$, $\{k,l\}$ and $\{l,i\}$ are not edges of $\Gamma$.  Let 
$$
s':=\left \{ 
\begin{array}{l l} 
\{i,j\}-\{j,k\}+\{k,l\}-\{l,i\} & \mbox{if}\ \mu=(n-2,2), \\  
(i,j)-(j,i)+(j,k)-(k,j)  & \\
\ \ \ \ +(k,l)-(l,k)+(l,i)-(i,l) &  \mbox{if}\ \mu=(n-2,1,1), \\
\end{array} 
\right .
$$
so that $s'\in S^\mu$. We claim that $<\!s-\lambda sg,s'\! >\neq 0$, in which case $s-\lambda sg\notin S^{\mu\perp}$, as desired. Certainly this holds if $\mu=(n-2,2)$ since $<\!s-\lambda sg,s'\! >$ is the weight of the edge $\{i,j\}$ in $s-\lambda sg$, so we assume that $\mu=(n-2,1,1)$. Observe that  $(u,v)$ is an edge of  $t\in S^\mu$ if and only if $(v,u)$ is an edge of $t$. Also, if $(u,v)$ has weight $\delta$ in $t$, then $(v,u)$ has weight $-\delta$ in $t$. Let $\delta$ be the weight of $(i,j)$ in $s-\lambda sg$. Now $<\!s-\lambda sg,s'\! >=<\!\delta(i,j)-\delta(j,i),(i,j)-(j,i)\! >=2\delta\neq 0$ since $\mu$ is $\Char(F)$-regular.
\end{proof}

\begin{proof}[Proof of Proposition $\ref{Rn(2) n>11}$]
By Lemma \ref{Rn(2) irr}, it suffices to show that $S_n\times A$ has a regular orbit on $V$, where $V$ is $D^\mu$ or $D^\mu\otimes_{F}\sgn$ and $\mu$ is $(n-2,2)$ or $(n-2,1,1)$.

Suppose first  that $n\geq 13$. Let $m:=2\lfloor n/2\rfloor$. If $\mu=(n-2,2)$, then define
$$
\begin{array}{rl}
s_1 & :=\{1,2\}-\{2,4\}+\{4,5\}-\{5,1\},\\
s_2 & :=\{2,3\}-\{3,4\}+\{4,6\}-\{6,2\},\\
s_3 & :=\{5,6\}-\{6,7\}+\cdots +\{m\!-\!1,m\}-\{m,5\},\\ 
\end{array}
$$
and if $\mu=(n-2,1,1)$, then define $s_1$, $s_2$ and $s_3$ by replacing each weighted edge  $\pm\{i,j\}$  above by $(i,j)-(j,i)$. Note that $s_1$, $s_2$ and $s_3$ are in $S^\mu$ in either case. Let $s:=s_1+s_2+s_3$. Now the underlying graph of $s$ has $m+4$ edges and maximal valency 4. Moreover, it is routine to verify that the underlying graph of $s$ has a trivial automorphism group. Thus  $S_n\times A$ has a regular orbit on $D^\mu$ and $D^\mu\otimes_F \sgn$ for $n\geq 13$  by Lemma \ref{reg}. 

Now suppose that $n=12$ and $|F|\neq 2$.
We may choose non-zero elements $\lambda_1$ and $\lambda_2$ of $F$ such that $\lambda_1+\lambda_2\neq 0$. For $\mu=(n-2,2)$, define
$$
\begin{array}{rl}
s_1 & :=\lambda_1(\{1,2\}-\{2,3\}+\{3,4\}-\{4,1\}),\\ 
s_2 & :=\lambda_2(\{3,4\}-\{4,5\}+\{5,6\}-\{6,7\}+\{7,8\}-\{8,3\}),\\
s_3 & :=\lambda_1(\{7,8\}-\{8,9\}+\{9,10\}-\{10,11\}+\{11,12\}-\{12,7\}),\\
\end{array}
$$
and for $\mu=(n-2,1,1)$, define $s_1$, $s_2$ and $s_3$ by replacing each weighted edge  $\pm\lambda_k\{i,j\}$  by $\lambda_k(i,j)-\lambda_k(j,i)$. Now  $s:=s_1+s_2+s_3\in S^\mu$ and the underlying graph of $s$ has $14$ edges, maximal valency 3 and trivial automorphism group, so we are done by Lemma \ref{reg}.

Lastly, if $n=12$ and $|F|=2$, then $S_n$ has a regular orbit on $V$ by Proposition \ref{regular Sn}(ii).
\end{proof}

\subsection{Modules in $R_n(1)$}
\label{s: Rn(1)}

Now we  find the only infinite class of faithful irreducible modules on which $S_n$ has no regular orbits. Neither module in $R_n(0)$ is faithful for $n\geq 5$, so we are only concerned with modules in $R_n(1)\setminus R_n(0)$. In particular, we are primarily concerned with the partition $(n-1,1)$. 

Let $F$ be a field and $\mu=(n-1,1)$. The $FS_n$-module $M^{\mu}$ is  the permutation module $F^n$ where $S_n$ acts on $F^n$ by permuting the coordinates.  The \textit{deleted permutation module}
 $S^\mu=\left \{(a_1,\ldots,a_n)\in F^n:\sum_{i=1}^na_i=0\right\}$  and has dimension $n-1$. Moreover, $S^{\mu\perp}=\{(a,\ldots,a)\in F^n\}$, and clearly $S^\mu\cap S^{\mu\perp}$ is either 0 when $p\nmid n$, or $S^{\mu\perp}$ when $p\mid n$, so the \textit{fully deleted permutation module} $D^\mu$ has dimension $n-1$ if $p\nmid n$ and dimension $n-2$ if $p\mid n$. 
Note that $D^\mu\downarrow A_n$ is irreducible  for $n\geq 5$.

The regular orbits of $S_n\times \mathbb{F}_q^*$ on $S^{(n-1,1)}$  were determined by Gluck \cite{Glu2007}, and also   Schmid \cite{SchP2007} for $\Char(\mathbb{F}_q)>n$. We now determine the regular orbits on $D^{(n-1,1)}$.

\begin{prop}
\label{Rn(1)}
 Let $V$ be the $\mathbb{F}_pS_n$-module $D^{(n-1,1)}$ where $n\geq 5$ and $p$ is a prime such that $p\leq n$.
\begin{itemize}
\item[(i)] If $S_n\leq G\leq S_n\times\mathbb{F}_p^*$, then $G$ does not have a regular orbit on  $V$ or $V\otimes_{\mathbb{F}_p}\sgn$.
\item[(ii)] If $A_n\leq G\leq A_n\times \mathbb{F}_p^*$, then $G$ has a regular orbit on $V$ if and only if $G=A_n$ and $p=n-1$.
\end{itemize}
\end{prop}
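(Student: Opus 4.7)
The plan is to analyse stabilisers of vectors by pigeonhole on coordinates. We represent $v\in V$ by $(a_1,\ldots,a_n)\in\mathbb{F}_p^n$ with $\sum a_i=0$ (understood modulo the all-ones vector when $p\mid n$). A transposition $(i,j)$ stabilises $v\in V$ whenever $a_i=a_j$, a $3$-cycle $(i,j,k)$ stabilises $v$ when three coordinates agree, and a double transposition $(i,j)(k,l)$ stabilises $v$ when two disjoint pairs of coordinates agree; the latter two are even. Since $p\le n$, pigeonhole forces some coordinate repetition; the key quantitative point is that if $n\ge p+2$, then in any $n$-tuple over $\mathbb{F}_p$ either some value is taken at least three times, or at least two distinct values are each taken twice, since otherwise the multiset would contain at most $p+1$ elements. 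This yields a non-trivial even stabiliser in $A_n$, which also stabilises $v\otimes 1\in V\otimes_{\mathbb{F}_p}\sgn$. Hence for $n\ge p+2$ no regular orbit exists for $G\ge A_n$ on either module.

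The edge cases are $n=p+1$ and $n=p$, which, given $n\ge 5$, force $p$ odd. When $n=p+1$ the only vectors not handled above have exactly one pair of equal coordinates; the remaining coordinates then exhaust $\mathbb{F}_p\setminus\{a\}$ where $a$ is the repeated value, and $2a+\sum_{x\in\mathbb{F}_p\setminus\{a\}}x=0$ forces $a=0$. Thus up to $S_n$-action the only exceptional vector is $v=(0,0,1,2,\ldots,p-1)$; its $S_n$-stabiliser is $\langle(1,2)\rangle$, which fixes $v\in V$, while its $A_n$-stabiliser is trivial. For $V\otimes_{\mathbb{F}_p}\sgn$ we construct the \emph{negation permutation} $\sigma_0\in S_n$ that swaps the position of value $k$ with that of $-k$ for each $k\in\{1,\ldots,p-1\}$; this is an involution of positions $\{3,\ldots,p+1\}$ satisfying $v\sigma_0=-v$, and exactly one of $\sigma_0$, $(1,2)\sigma_0$ has odd sign, providing the odd element required to stabilise $v\otimes 1$. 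When $n=p$ the only vectors not already handled have all distinct coordinates, so $v=(0,1,\ldots,p-1)$ modulo constants; the $p$-cycle $(1,2,\ldots,p)$ sends $v$ to $v+\mathbf{1}$ and hence stabilises $v$ in the quotient $V$, and being even for odd $p$ it lies in $A_n$ and stabilises both $v\in V$ and $v\otimes 1\in V\otimes_{\mathbb{F}_p}\sgn$. This completes part (i).

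For part (ii), the same case analysis shows that $A_n$-stabilisers are non-trivial in every situation except $n=p+1$ and $v=(0,0,1,\ldots,p-1)$: here the $S_n$-stabiliser $\{1,(1,2)\}$ meets $A_n$ only in the identity, so $G=A_n$ has a regular orbit on $V$ through $v$, giving the ``if'' direction. For $G\supsetneq A_n$ in this edge case we write $G=A_n\times A$ with $1\ne A\le\mathbb{F}_p^*$, pick $\lambda\in A\setminus\{1\}$, and seek $(\sigma,\lambda)\in G$ with $\lambda v\sigma=v$: the equations $v_{\sigma(i)}=\lambda^{-1}v_i$ force $\sigma(i)=\lambda^{-1}(i-2)+2$ for $i\ge 3$, a permutation of $\{3,\ldots,p+1\}$ conjugate to multiplication by $\lambda^{-1}$ on $\mathbb{F}_p^*$, while $\sigma$ on $\{1,2\}$ is either the identity or the swap. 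These two candidates have opposite parity, so exactly one lies in $A_n$, yielding the required stabiliser.

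The main technical obstacle is parity bookkeeping in the two edge cases: the signs of the negation permutation and of the multiplication-by-$\lambda^{-1}$ permutation vary with $p$ and with the order of $\lambda$, but the freedom to post-compose with the transposition $(1,2)$ on the two duplicated positions always absorbs a sign flip, so exactly one of the two candidate permutations has the required parity in each case. Once this is verified, the proof assembles from the case split above.
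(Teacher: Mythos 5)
Your proof follows essentially the same strategy as the paper's: the case split on $p \le n-2$, $p=n$, and $p=n-1$; the pigeonhole argument to produce a non-trivial even stabiliser when $n \ge p+2$; the $p$-cycle that shifts values by a constant when $n=p$; and, in the hardest case $p=n-1$, the observation that the repeated value must be $0$ together with the explicit stabilising permutation for a given $\lambda\in\mathbb{F}_p^*$, using post-composition by the transposition $(1,2)$ of the two zero positions to adjust parity. The negation permutation $\sigma_0$ you construct for $V\otimes\sgn$ at $p=n-1$ is precisely the paper's claim specialised to $\lambda=-1$. So the route is the same, and nearly all the details check out.

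The one genuine gap is in the $n=p$ case. You assert that ``the only vectors not already handled have all distinct coordinates,'' but a coset in $D^{(n-1,1)}$ could a priori be represented by a vector with exactly one repeated pair and no triple or second pair; this situation is not excluded by your pigeonhole bound, which only applies for $n\ge p+2$. The exclusion requires the same zero-sum argument you already used for $n=p+1$: for $n=p$, such a vector uses $p-1$ distinct values, so exactly one $b\in\mathbb{F}_p$ is missing, and since $\sum_{x\in\mathbb{F}_p}x=0$ and (choosing the representative in $S^{(n-1,1)}$) $\sum_i v_i=0$, one gets that the repeated value equals the missing value $b$, a contradiction. This is exactly what the paper proves at that point. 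Once you insert it, your argument is complete and matches the paper's proof step for step.
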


\begin{proof}
Let $\mu:=(n-1,1)$ and   $W:=S^\mu\cap S^{\mu\perp}$.  Let $H\leq G\leq H\times\mathbb{F}_p^*$ where $H=S_n$ or $A_n$.
We prove (i) and (ii) simultaneously by considering the various possibilities for $p$ in relation to $n$. 

Suppose that $p\leq n-2$. Clearly every $n$-tuple of elements from $\mathbb{F}_p$ must contain either three repeated entries or two pairs of repeated entries, so every element of $V$ is fixed by some non-trivial element of $A_n$. But if $A_n$ has no regular orbits on $V$, then $G$ has no regular orbits  on  $V$ or $V\otimes_{\mathbb{F}_p} \sgn$, so this case is complete.

Suppose that $p=n$. Again, it suffices to prove that  $A_n$ has no regular orbits on $V$. Let $v+W\in V$. Note that if $v$ has exactly one pair of repeated entries, then there is exactly one $b\in \mathbb{F}_p$ that does not appear in $v$, but the sum of the  elements of $\mathbb{F}_p$ vanishes, as does the sum of the coordinates of $v$, so $b$ must be the repeated entry, a contradiction. Moreover, if $v$ has at least two pairs of repeated entries or a triple of repeated entries, then $v+W$ is certainly fixed by a non-trivial element of $A_n$. Hence we may  assume that $v$ is of the form $(v_1,\ldots,v_p)$ where $v_i\neq v_j$ for all $i\neq j$. Let $g\in S_n$ be the permutation for which $vg=(v_1+1,\ldots,v_p+1)$. Now $g$ has no fixed points and fixes $v+W$. Moreover, $g$ must be a $p$-cycle, for  if $(i_1\cdots i_k)$ is a cycle of $g$ for some $k\in \{2,\ldots, p\}$, then  $v_{i_k}=v_{i_1}+1$ and $v_{i_j}=v_{i_{j+1}}+1$ for all $j\in\{1,\ldots, k-1\}$, and it follows that $v_{i_1}=v_{i_1}+k$. Thus $k=p$ and $g\in A_n$, as desired.

Suppose that $p=n-1$. Then $V=S^\mu$. 
First we claim that if $1\neq \lambda\in \mathbb{F}_p^*$ and $v$ is an element of $V$ with exactly one pair of repeated entries, then there exists $1\neq g\in A_n$ such that $vg\lambda=v$. Write $v=(v_1,\ldots, v_n)$, and let $i$ and $j$ be the unique pair of distinct indices for which $v_i=v_j$.   Since the sum of the elements in $\mathbb{F}_p$ is zero, it follows that $v_i=0$. Thus the non-zero entries of $v$ are the $p-1$ distinct elements of $\mathbb{F}_p^*$, and so the set of entries of $v$ contains a transversal $T$ for the cosets of $\langle \lambda\rangle$ in $\mathbb{F}_p^*$.
Let $m$ be the order of $\lambda$ in $\mathbb{F}_p^*$.
As in the proof of \cite[Lemma 2]{Glu2007}, each coset of $\langle \lambda\rangle$ has the form $\{v_{i_0},\ldots,v_{i_{m-1}}\}$ for some $v_{i_0}\in T$, where $v_{i_j}=\lambda^jv_{i_0}$ for $1\leq j\leq m-1$.  Define $g\in A_n$ to be the product of the disjoint cycles $(i_0,\ldots,i_{m-1})$ and  $(i\ j)$ if needed. 
Now $g\neq 1$ and $vg\lambda=v$, as claimed.

If $G=A_n$, then $(1,2,\ldots,p-1,0,0)$ lies in a regular orbit of $G$, so we may assume that $G\neq A_n$. We claim that $G$ does not have a regular orbit on $V$ or $V\otimes_{\mathbb{F}_p}\sgn$. Let $0\neq v\in V$. If $v$ has a triple of repeated entries or two pairs of repeated entries, then some $g\in A_n$ fixes both $v$ and $v\otimes 1$, so we may assume that $v$ has exactly one pair of repeated entries, say with indices $i$ and $j$. Suppose that $H=S_n$.
Some element of $S_n$ will fix $v$, so $G$ has no regular orbits on $V$. Moreover, by the claim there exists $1\neq g\in A_n$ such that $vg=-v$, so $g(i\ j)$ fixes $v\otimes 1$, and we conclude that $G$ has no regular orbits on $V\otimes_{\mathbb{F}_p}\sgn$. Thus $H=A_n$.  Since $A_n<G$,  there exists $1\neq \lambda\in G\cap\mathbb{F}_p^*$. Now there exists $1\neq g\in A_n$ such that $vg\lambda=v$ by the claim, and $g\lambda\in G$, so $G$ has no regular orbits on $V$. 
\end{proof}

\subsection{Proof of  Theorem \ref{regular}}
\label{s: sym proof}

Let $d:=\dim_{\mathbb{F}_p}(V)$. There exists a non-negative integer $m$ for which $D^\mu\in R_n(m)$. Since $V$ is faithful as an $\mathbb{F}_pH$-module,  $D^\mu$ is faithful as an $\mathbb{F}_pS_n$-module, so $D^\mu\notin R_n(0)$. If $D^\mu\in R_n(1)\setminus R_n(0)$, then  (i) holds by Proposition \ref{Rn(1)}, so we assume that $D^\mu\notin R_n(1)$, in which case the condition of (ii) holds.

Suppose that $n\geq 7$. Note that for  $\mu$ listed in Table \ref{tab: Sn ex},  $\dim_{\mathbb{F}_p}(V)$ is as listed by   \cite{JamG1978,BAtlas}. If $D^\mu\not\in R_n(2)$, then we are done  by Propositions \ref{regular Sn}(i) and \ref{regular An}, so we assume  that $D^\mu\in R_n(2)\setminus R_n(1)$, and if $H=S_n$, then we are done by Propositions \ref{regular Sn}(ii) and  \ref{Rn(2) n>11}. Hence we assume that $H=A_n$, in which case we are done by Proposition \ref{Rn(2) n>11} unless  $n\leq 11$. Recall that $V\downarrow A_n=D^\mu$ by Lemma \ref{Rn(2) irr}, and suppose that $G$ does not have a regular orbit on $V$.
Then $S_n\times \mathbb{F}_p^*$ does not have a regular orbit on $D^\mu$, so Proposition \ref{regular Sn}(ii) implies that $p=2$ and $\mu=(n-2,2)$ for $7\leq n\leq 8$. If $\mu=(5,2)$, then  $G$ has a regular orbit on $V$ by  {\sc Magma}, so $\mu=(6,2)$, in which case  $G$ does not have a regular orbit on $V$ since  $|V|<|G|$.

 Thus  $n=5$ or 6. Using \cite{GAP4,SpinSym}, we determine the possibilities for $\mu$ and $d$. If $(n,p,\mu,G,d)$ is not listed in Table \ref{tab: Sn ex}, then  $G$ has a regular orbit on $V$ by {\sc Magma}, so we may assume that $(n,p,\mu,G,d)$ is listed in Table \ref{tab: Sn ex}.  If $\mu$ is $(3,2)$ or $(4,2)$, then $|V|<|G|$, so $G$ does not have a regular orbit on $V$. Otherwise, we determine that $G$ has no regular orbits on $V$ using {\sc Magma}. \hfill $\Box$

\section{Covering groups}
\label{s: cover props}

 Recall  that the proper covering groups of $S_n$ and $A_n$ are $2.S_n^+$ and $2.S_n^-$ for $n\geq 5$,  $2.A_n$ for $n\geq 5$, and  $3.A_n$ and $6.A_n$ for $n=6$ or $7$.  
We focus on the double covers of $S_n$ and $A_n$ for most of this section, as $3.A_n$ and $6.A_n$ will be dealt with computationally. 
For $n\geq 5$,
\begin{align*}
 2.S_n^+&:=\langle z, s_1,\ldots,s_{n-1}:z^2=1,s_i^2=(s_is_{i+1})^3=1,zs_i=s_iz,(s_is_j)^2=z\ \mbox{if}\ |i-j|>1\rangle,\\
2.S_n^-&:=\langle z, t_1,\ldots,t_{n-1}:z^2=1,t_i^2=(t_it_{i+1})^3=z,zt_i=t_iz,(t_it_j)^2=z\ \mbox{if}\ |i-j|>1\rangle.
\end{align*}
The centre of each group is $\{1,z\}$, and they are isomorphic precisely when $n=6$, in which case we write $2.S_6$.
We also write $2.S_n^{\varepsilon}$ when no distinction between the two  covers needs to be made. The cover  $2.A_n$ is  the derived subgroup of $2.S_n^\varepsilon$ and  has centre $\{1,z\}$.

Let $G$ be $2.S_n^\varepsilon$ or $2.A_n$ where $n\geq 5$, and let $z$ be the unique central involution of $G$. Let $F$ be a field, and let $V$ be an irreducible $FG$-module. Now $z$ must act as $1$ or $-1$ on $V$. 
(Indeed, this is the case for a central involution in any finite group.) 
Since every non-trivial normal subgroup of $G$ contains  $z$, it follows that $V$ is faithful precisely when $z$ acts as $-1$. 
 In particular,  $G$ has no faithful irreducible representation over a field of characteristic $2$.
 In this section, we prove the following theorem.

 \begin{thm}
\label{regular double}
Let $H$ be a proper covering group of  $S_n$ or $A_n$ where $n\geq 5$.
Let $G$ be a group for which  $H\leq G\leq H\circ \mathbb{F}_p^*$ where $p$ is a prime and $p\leq n$. Let $V$ be a faithful irreducible $\mathbb{F}_pH$-module. Let $d:=\dim_{\mathbb{F}_p}(V)$.
 The group   $G$ has a regular orbit on $V$ if and only if  $(n,p,G,d)$  is not listed in Table $\ref{tab: total ex}$. 
\end{thm}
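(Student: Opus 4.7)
The plan is to mirror the structure of the proof of Theorem \ref{regular}, substituting the covering-group analogue of Lemma \ref{An reduction} (namely the promised Lemma \ref{2An reduction}) for the reduction step. First I would observe that the problem immediately splits along the isomorphism type of $H$. If $H$ is one of the double covers $2.S_n^\varepsilon$ or $2.A_n$, then since the central involution $z$ must act as $-1$ on a faithful $V$, no faithful irreducible $\mathbb{F}_pH$-module exists in characteristic $2$; thus we may assume $p$ is odd. If instead $H$ is $3.A_n$ or $6.A_n$, then $n \in \{6,7\}$ and the entire analysis is computational, handled directly with {\sc Magma} by enumerating faithful irreducible $\mathbb{F}_pH$-modules for each admissible $p$ and testing whether each $G$ with $H \leq G \leq H \circ \mathbb{F}_p^*$ has a regular orbit, cross-referencing the results with Table \ref{tab: total ex}.

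For the double-cover cases, the core step is the reduction: pass to $k := \End_{\mathbb{F}_pG}(V)$ to work with an absolutely irreducible representation, note that $Z(G) \hookrightarrow k^*$ by Lemma \ref{scalar} so $|Z(G)| \leq q-1$ where $q = |k|$, and apply Lemma \ref{2An reduction} together with the bounds of Lemma \ref{bounds}. Since $|Z(2.A_n)| = 2$ and $|Z(2.S_n^\varepsilon)| = 2$, the hypothesis $|Z(G)| \leq n$ of equation (\ref{even better bound}) holds for all $n \geq 2$, giving the clean upper bound $\dim_k(V) \leq (n/2)\log_q(2n!|Z(G)|)$. This restricts $n$ and $q$ to a finite list (much as in the proof of Lemma \ref{An reduction}), and after using the known decomposition behaviour of $D^\mu \downarrow 2.A_n$ versus $2.S_n^\varepsilon$, one is left with a short list of $(n,p,\mu,\dim_k V)$ candidates for which $G$ could fail to have a regular orbit.

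The next step is to identify these surviving candidates. For $n \geq 7$, experience (and the pattern of the introduction's remark about basic spin modules) suggests that nearly all survivors will be basic spin modules of $2.A_n$ or $2.S_n^\varepsilon$, whose dimensions are the well-known $2^{\lfloor (n-1)/2 \rfloor}$ or $2^{\lfloor (n-2)/2 \rfloor}$. For these, the dimension argument $|V| < |G|$ often already precludes a regular orbit, confirming the table entries. For the handful of non-spin candidates (e.g.\ the $d=6$ entry at $(n,p)=(5,3)$ and the $d=4$ entry at $(n,p,G)=(5,5,2.A_5 \circ \mathbb{F}_5^*)$), I would verify non-existence of regular orbits computationally in {\sc Magma}. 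Conversely, for every quadruple $(n,p,G,d)$ that survives the bounds but is \emph{not} listed in Table \ref{tab: total ex}, I must exhibit a regular orbit, again via {\sc Magma}; the fact that the two double covers $2.S_n^+$ and $2.S_n^-$ can behave differently (even when $\mathbb{F}_p$ contains a primitive fourth root of unity, as noted in the introduction) means that each must be checked independently rather than identified via the $\mathbb{F}_p$-algebra isomorphism.

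The main obstacle will be the book-keeping around the central product $H \circ \mathbb{F}_p^*$ and the interplay between the action of $z$, the field $k$, and whether $D^\mu \downarrow 2.A_n$ is irreducible or splits: a regular orbit on the $kG$-module $V$ must be translated back to a regular orbit on the $\mathbb{F}_pH$-module, and the subtle cases where one of $2.S_n^+$, $2.S_n^-$ has a regular orbit while the other does not (as at $(n,p)=(5,5), (7,7), (8,3), (8,5), (9,3), (10,3)$) must be resolved rather than averaged. A secondary difficulty is computational: the larger basic spin modules (e.g.\ dimension $32$ for $2.A_{12}$ at $p=3$, or the $d=16$ cases at $n=9,10,11$) may require care with orbit-enumeration strategies, for which I would use {\sc Magma}'s orbit machinery or fall back on {\sc Orb} \cite{ORB} as in the proof of Proposition \ref{regular Sn}.
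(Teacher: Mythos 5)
Your plan follows the paper's own strategy for Theorem \ref{regular double}: treat $3.A_n$ and $6.A_n$ (only $n\in\{6,7\}$) computationally, and for the double covers pass to $k=\End_{\mathbb{F}_pG}(V)$, apply Lemma \ref{2An reduction} together with Lemma \ref{bounds} (and implicitly the Kleshchev--Tiep dimension bound of Theorem \ref{KT}) to restrict $(n,q)$ to a finite list of mostly basic spin cases, then settle the remaining candidates by explicit computation or the crude $|V|<|G|$ test --- which is exactly what Propositions \ref{regular 2Sn} and \ref{regular 2An} do before the final assembly. A few small slips are worth noting but do not affect the plan: the justification that $|Z(G)|\leq n$ should appeal to $Z(G)\leq \mathbb{F}_p^*$ (so $|Z(G)|\leq p-1<n$), not to $|Z(H)|=2$, since $G$ may strictly contain $H$; the notation $D^\mu\downarrow 2.A_n$ is not meaningful ($D^\mu$ is an $S_n$-module --- what is actually restricted is the spin module of $2.S_n^\varepsilon$); and the ``$d=6$ at $(n,p)=(5,3)$'' entry in Table \ref{tab: total ex} belongs to $A_5\times\mathbb{F}_3^*$ rather than to a proper covering group, so the single non-spin covering-group exception is $(5,5,2.A_5\circ\mathbb{F}_5^*,4)$ as the introduction notes.
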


Let $G$ be $2.S_n^\varepsilon$ or $2.A_n$ where $n\geq 5$.  
We will be primarily interested in the so-called basic spin modules of $G$, for these have minimal dimension among the faithful irreducible modules by \cite{KleTie2004} (cf.\ Theorem \ref{KT}). In fact,  non-basic spin modules have such large dimension that they almost always have regular orbits. Indeed, there is only one non-basic spin module listed in Table \ref{tab: total ex}; it arises for $n=p=5$  when $G=2.A_5\circ\mathbb{F}_5^*$ and has dimension 4.

Over the complex numbers,  the irreducible  representations  of $G$ can be indexed by certain partitions of $n$ \cite{Sch1911}, and the complex basic spin modules 
 of $G$ are  those representations corresponding to the partition $(n)$.  For an algebraically closed field  of positive characteristic $p$, a \textit{basic spin module} is a composition factor of the reduction modulo $p$ of a complex basic spin module, and by \cite{Wal1979}, this reduction is irreducible except when $p\mid n$ and either $n$ is odd for $G=2.S_n^\varepsilon$,  or $n$ is even for $G=2.A_n$. Moreover,   there are at most two basic spin modules, and when there are two, they are either associates or conjugates. Lastly, every basic spin module of $2.A_n$ arises as a submodule of a basic spin module of $2.S_n^\varepsilon$.

Let $p$ be a prime. As in  \cite{KleTie2004}, define 
   $$
\delta(G):= \left \{
\begin{array}{l l}
 2^{\left\lfloor \tfrac{1}{2}\left( n-1-\kappa(p,n) \right)\right\rfloor} & \mbox{if}\ G=2.S_n^\varepsilon \\
  2^{\left\lfloor \tfrac{1}{2}\left( n-2-\kappa(p,n) \right)\right\rfloor} & \mbox{if}\ G=2.A_n ,\\
\end{array}
\right.
$$
where $\kappa(p,n):=1$ if $p\mid n$ and $0$ otherwise.
Now $\delta(G)$ is the dimension of a basic spin module of $G$ over a splitting field of characteristic $p$ 
 \cite{Wal1979}. Moreover, Kleshchev and Tiep  \cite{KleTie2004}  provide a lower bound for the dimensions of faithful irreducible representations of $G$ in terms of $\delta(G)$, which we now state.

\begin{thm}
\label{KT}
Let $G$ be $2.S_n^\varepsilon$ or $2.A_n$ where $n\geq 8$, and let $F$ be an algebraically closed field of positive characteristic. If $V$ is a faithful irreducible $FG$-module for which $\dim_F{V}<2\delta(G)$, then $V$ is a basic spin module and $\dim_F{V}=\delta(G)$.
\end{thm}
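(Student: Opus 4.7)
The plan is to prove the result by induction on $n$, with base cases $n=8,9$ handled by consulting the known decomposition matrices of $2.S_n^\varepsilon$ and $2.A_n$ in characteristic $p$ (cf.\ \cite{BAtlas,GAP4}). For the inductive step, assume $V$ is a faithful irreducible $FG$-module with $\dim_F V < 2\delta(G)$, where $G$ is either $2.S_n^\varepsilon$ or $2.A_n$. I would pick a natural subgroup $H$ of $G$ isomorphic to $2.S_{n-1}^\varepsilon$ or $2.A_{n-1}$ that still contains the central involution $z$; restricting $V$ to $H$ yields a faithful $FH$-module since $z$ acts as $-1$ on $V$.

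Next I would analyse $V\downarrow H$ composition factor by composition factor. By the inductive hypothesis, any faithful irreducible $FH$-module has dimension at least $\delta(H)$, and any non-basic-spin faithful irreducible $FH$-module has dimension at least $2\delta(H)$. Comparing $\delta(G)$ and $\delta(H)$ directly from the defining formula shows $\delta(G)\in\{\delta(H),2\delta(H)\}$ depending on the parity of $n$ and on whether $p$ divides $n$ or $n-1$, so the hypothesis $\dim_F V<2\delta(G)\leq 4\delta(H)$ caps the total dimension tightly and forces all but a small bounded number of the faithful composition factors of $V\downarrow H$ to be basic spin modules. This count would be combined with modular branching rules for spin representations: the key ingredient is that the restriction to $H$ of a basic spin module of $G$ is a direct sum of one or two basic spin modules of $H$, with the exact decomposition governed by the residue of $n$ modulo $p$. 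This is the positive-characteristic analogue of the classical Schur--Morris branching and can be deduced from Wales's description of when the reduction mod $p$ of the characteristic-zero basic spin module remains irreducible.

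If every composition factor of $V\downarrow H$ is a basic spin module of $H$, then a head/socle argument together with the branching rule above identifies $V$ itself as a basic spin module of $G$, giving the conclusion. Otherwise there is at least one non-basic-spin faithful composition factor, contributing dimension at least $2\delta(H)$, and the presence of any additional composition factor (which must occur if $V\downarrow H$ is not already irreducible and of the forbidden small dimension) would push $\dim_F V$ beyond $2\delta(G)$, a contradiction. The main obstacle is establishing the modular branching rules for basic spin representations from $G$ to $H$ with enough precision; unlike the ordinary non-spin setting (Jantzen--Seitz, Kleshchev), these rules in characteristic $p$ rest on the combinatorics of $p$-strict partitions and on a careful analysis of socles of restrictions of spin modules. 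A secondary subtlety is to track uniformly through the induction how basic spin modules of $2.S_n^\varepsilon$ behave on further restriction to $2.A_n$, since whether such a module remains irreducible or splits into associates/conjugates depends again on the parity of $n$ and on $p$-divisibility, and both families $\{2.S_n^\varepsilon\}$ and $\{2.A_n\}$ must be advanced in step.
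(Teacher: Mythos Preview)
The paper does not prove this theorem. It is quoted from Kleshchev and Tiep \cite{KleTie2004} and used as a black box; the text immediately preceding the statement reads ``Kleshchev and Tiep \cite{KleTie2004} provide a lower bound for the dimensions of faithful irreducible representations of $G$ in terms of $\delta(G)$, which we now state.'' So there is nothing in the paper to compare your argument against.

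As for your outline itself: the overall shape (induction on $n$, restrict to a degree-$(n-1)$ subgroup, bound composition factors using the inductive hypothesis) is indeed the spirit of how such minimal-dimension results are attacked, and is close to what Kleshchev and Tiep actually do. But the step you flag as ``a head/socle argument together with the branching rule above identifies $V$ itself as a basic spin module of $G$'' is the entire content of the theorem, and you have not supplied it. Knowing that every composition factor of $V\downarrow H$ is basic spin does not by itself force $V$ to be basic spin for $G$; one needs precise information about which $FG$-modules can have this restriction pattern, and that is exactly what the modular branching theory for spin representations (Brundan--Kleshchev combinatorics on $p$-strict partitions, socle analysis, etc.) is for. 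Your dimension count also does not quite close: when $2\delta(G)=4\delta(H)$, a configuration like one non-basic factor of dimension $2\delta(H)$ plus one basic factor of dimension $\delta(H)$ gives total $3\delta(H)<2\delta(G)$, so the contradiction you claim in the ``otherwise'' case does not follow from counting alone. In short, your proposal correctly identifies the strategy and the main obstacle, but the obstacle is real and is not overcome in your sketch; the actual proof in \cite{KleTie2004} is substantial.
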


 Theorem \ref{KT} can be applied to any finite  field in the following way. Let $V$ be a faithful irreducible $FG$-module where $F$ is a finite field and $G$ is $2.S_n^\varepsilon$ or $2.A_n$. Let $k:=\End_{FG}(V)$. Recall that $V$ is a faithful absolutely irreducible $kG$-module. If  $V$ is the realisation over $k$ of a basic spin module of $G$, then we also refer to $V$ as a basic spin module.  For $n\geq 8$, Theorem \ref{KT} implies that either $\dim_k(V)=\delta(G)$, in which case $V$ is a basic spin module, or
   $2\delta(G)\leq \dim_k(V)\leq \dim_{F}(V)$.

Unlike $S_n$, not every field is a splitting field for $2.S_n^\varepsilon$. However, every field containing $\mathbb{F}_{p^2}$ for $p$ an odd prime is a splitting field for $2.S_n^\varepsilon$ and $2.A_n$ (cf.   \cite[Corollary 5.1.5]{Maa2011}). Note that  there are instances where $\mathbb{F}_p$ is a splitting field for $2.A_n$ but not for $2.S_n^\varepsilon$.

The Brauer character tables of $2.S_n^+$ and $2.A_n$ for $p\leq n$ and $5\leq n\leq 12$ may be found in \cite{BAtlas} and also in \cite{GAP4} for $p\leq n$ and $5\leq n\leq 13$. The Brauer character tables of $2.S_n^-$ for $5\leq n\leq 18$ and $p\in \{3,5,7\}$ may be found in {\sf GAP} \cite{GAP4} via the  {\sc SpinSym} package \cite{SpinSym}.    We can convert the character table of one double cover to that of the other using {\sf  GAP}. When the reduction modulo $p$ of an ordinary irreducible representation of  $2.S_n^\varepsilon$ or $2.A_n$ is irreducible, the Brauer character is the ordinary character restricted to $p$-regular elements and  can therefore be accessed using the generic character tables  in {\sf GAP}.
 
We begin with a reduction for almost quasisimple groups $G$ with $F^*(G)'\simeq 2.A_n$.

\begin{lemma}
\label{2An reduction}
Let $G$ be an almost quasisimple group where   $N:=F^*(G)'\simeq 2.A_n$ and $n\geq 8$. Let $F$ be a finite field. Let $V$ be a faithful irreducible $FG$-module, $k:=\End_{FG}(V)$ and  $q:=|k|$. Let $W$ be an irreducible $kN$-submodule of $V$.
If $G$ has no regular orbits on $V$, then $n\leq 20$, and if $\Char(F)\leq n$, then the following hold.
\begin{itemize}
\item[(i)] If $n\geq 13$, then $W$ is a basic spin module and $(n,q)$ is listed in Table $\ref{tab: double big}$. If there is no $*$ next to $q$, then $V\downarrow N=W$ and $W$ is an  absolutely irreducible $kN$-module. 
\begin{table}[!ht]
\centering
\begin{tabular}{ c  c }
\hline
$n$ & $q$   \\
\hline 
$13$ & $3^*, 5^*, 7^*, 9^*, 11^*, 13^*, 25$\\
& $ 27, 49, 81, 121,  169, 243$  \\

$14$ & $3^*, 5, 7^*, 9, 11, 13, 49$ \\

 $15$ & $3^*,  5^*,  9, 11, 13, 25,  27$\\

$16$ & $3, 5, 7$ \\
 
 $ 17$ & $3^*,  7, 9, 11$ \\
 
  $18$ & $3^*, 9$\\

 $19$ & $3$  \\

 $20$ & $5$ \\
\hline
\end{tabular}
\caption{Possible $q$ when $n\geq 13$}
\label{tab: double big}
\end{table}
\item[(ii)] If $n\leq 12$ and $W$ is not a basic spin module, then $V\downarrow N=W$ and $W$  is an absolutely irreducible $kN$-module where $(n,q,\dim_k(W))$ is listed in Table $\ref{tab: double small}$.
\begin{table}[!ht]
\centering
\begin{tabular}{ c c c c }
\hline
$n$ &  $q$ & $\dim_k(W)$   \\
\hline 
$8$ &  $9$ & $24$\\
 &   $7$ & $16$ \\
 $9$ & $3$ & $48$ \\
 $10$ & $3$, $5$ & $48$ \\
 $11$ & $5$ &  $56$ \\
  \hline
\end{tabular}
\caption{Possible $q$ and $\dim_k(W)$ when $n\leq 12$ and $W$ is non-basic}
\label{tab: double small}
\end{table}
\end{itemize}
\end{lemma}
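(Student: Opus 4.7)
The overall strategy closely parallels that of Lemma \ref{An reduction}: use the upper bound on $\dim_k(V)$ supplied by Lemma \ref{bounds} together with a lower bound on $\dim_k(W)$ to constrain the pair $(n,q)$, then treat the surviving pairs as a finite case analysis. The essential new input is the Kleshchev--Tiep bound (Theorem \ref{KT}), which replaces the James-type bound of Lemma \ref{key} and provides an exponential lower bound for the dimension of any non-basic-spin faithful irreducible of $2.A_n$.

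I would first set up the two bounds. Since $V$ is a faithful absolutely irreducible $kG$-module, Lemma \ref{scalar} embeds $Z(G)$ in $k^*$, so $|Z(G)|\leq q-1$. By Lemma \ref{quasi}, $F^*(G)=N\cdot Z(G)$ and $[G:F^*(G)]\leq 2$ for $n\geq 8$, and since $Z(G)$ acts by scalars on $V$, the submodule $W$ is in fact a $kF^*(G)$-module; Clifford theory (Lemma \ref{index 2}) gives $\dim_k(V)\in\{\dim_k(W),2\dim_k(W)\}$. Equation (\ref{better bound}) of Lemma \ref{bounds} then yields
\[
\dim_k(W)\ \leq\ \dim_k(V)\ \leq\ g(q,n)\ :=\ \max\bigl\{(n-1)\log_q(n(n-1)(q-1)),\ \tfrac{n}{2}\log_q(2n!(q-1))\bigr\},
\]
and by Lemma \ref{decreasing} this quantity is decreasing in $q$. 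For the lower bound, set $k':=\End_{kN}(W)\supseteq k$, so $W$ is absolutely irreducible over $k'$ with $\dim_k(W)=[k':k]\,\dim_{k'}(W)$. Applying Theorem \ref{KT} to $W\otimes_{k'}\overline{k}$ forces either $\dim_{k'}(W)=\delta(N)$ (with $W$ a basic spin module) or $\dim_{k'}(W)\geq 2\delta(N)$. In particular, $\dim_k(W)\geq\delta(N)=2^{\lfloor(n-2-\kappa(p,n))/2\rfloor}$, with equality only if $W$ is an absolutely irreducible basic spin module, and otherwise $\dim_k(W)\geq 2\delta(N)$.

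Combining gives $\delta(N)\leq g(q,n)$. Since the left-hand side grows like $2^{n/2}$ and the right-hand side like $n\log_q(n!)$, this inequality fails for $n\geq 21$, so $n\leq 20$. A routine numerical check (as in the proof of Lemma \ref{An reduction}) then restricts $q$ for each $n\in\{8,\ldots,20\}$ under the hypothesis $\Char(F)\leq n$, yielding a finite list of candidate pairs $(n,q)$. For $n\geq 13$ I would verify $2\delta(N)>g(q,n)$ in each case to eliminate non-basic $W$ via Theorem \ref{KT}; the surviving pairs are precisely those of Table \ref{tab: double big}. For $8\leq n\leq 12$, both basic and non-basic irreducibles can in principle satisfy $\dim_k(W)\leq g(q,n)$, and I would consult the Brauer character tables of $2.A_n$ in \cite{BAtlas,GAP4} to enumerate the non-basic candidates of sufficiently small dimension, obtaining exactly Table \ref{tab: double small}. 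For entries with no $*$, the claim that $V\downarrow N=W$ and that $W$ is absolutely irreducible over $k$ is read off from the same Brauer tables: $W$ is absolutely irreducible over $k$ iff the field of values of the relevant basic spin Brauer character is contained in $\mathbb{F}_q$, and $V\downarrow N=W$ iff this character is stable under the outer automorphism induced by a representative of $G\setminus F^*(G)$.

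The main technical obstacle is this last step: for each exceptional $(n,q)$, correctly determining $[k':k]$ and whether $V\downarrow N$ splits as $W\oplus Wg$ requires a careful reading of the Brauer character tables, paying particular attention to the cases $p\mid n$ where two basic spin modules exist and can exhibit different Galois behaviour and different behaviour under the outer automorphism. All of this is mechanical given the tables in \cite{BAtlas,GAP4,SpinSym} but numerically delicate, and is the part of the proof requiring the most bookkeeping.
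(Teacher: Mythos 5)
Your overall strategy matches the paper's: combine the Kleshchev--Tiep lower bound (Theorem \ref{KT}) with the upper bound $\dim_k(V)\leq g(q,n)$ from Lemma \ref{bounds}, use Clifford theory for $\dim_k(V)\in\{\dim_k(W),2\dim_k(W)\}$, and then do a finite case analysis with Brauer character table lookups.

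However, there is a concrete gap in your plan for eliminating non-basic $W$ when $n\geq 13$. You write that you ``would verify $2\delta(N)>g(q,n)$ in each case to eliminate non-basic $W$ via Theorem \ref{KT}.'' This inequality in fact \emph{fails} for the starred pairs $(n,q)$ in Table \ref{tab: double big} --- that is precisely what the $*$ annotation records --- so Theorem \ref{KT}'s purely dimensional bound does not eliminate non-basic $W$ there. The paper closes this gap by a second table lookup: for starred $(n,q)$ one must check directly in the Brauer character tables of $2.A_n$ that there is no faithful irreducible $kN$-module of dimension in the interval $[2\delta(N),\lfloor g(q,n)\rfloor]$. Relatedly, the numerical bound $\delta(N)\leq g(q,n)$ alone does not cut the candidate list down to Table \ref{tab: double big}: it leaves a handful of extra pairs (the paper's set $P=\{(13,125),(13,343),(14,343),(15,7),(17,5)\}$) which are eliminated only by consulting the tables to show that the basic spin module is not absolutely irreducible over $\mathbb{F}_q$ for those $q$ (whence $\dim_k(W)\geq 2\delta(N)$, giving a contradiction as before). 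Your sketch does not mention either of these table-based eliminations, and as written the stated dimension comparison would not succeed. Once those two adjustments are made --- supplementing the $2\delta(N)$-versus-$g(q,n)$ comparison with Brauer table checks for the starred entries, and eliminating $P$ by an absolute-irreducibility check --- the rest of your plan (the $n\leq 12$ non-basic enumeration, the absolute-irreducibility and $V\downarrow N=W$ verifications via character tables) aligns with the paper's proof.
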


\begin{proof}
Suppose that $G$ has no regular orbits on $V$.
Let $p:=\Char(F)$.
Since $V$ is a faithful absolutely irreducible  $kG$-module, Lemma \ref{scalar} implies that $Z(G)\leq k^*$, and so $|Z(G)|\leq q-1$. Let
$$g(q,n):=\max{\{(n-1) \log_q{(n(n-1)(q-1))}, \tfrac{n}{2}\log_q{(2n!(q-1))}\}}.$$
Now equation (\ref{better bound}) of Lemma \ref{bounds} implies that $\dim_{k}(V)\leq  \lfloor g(q,n) \rfloor$.   Note that if $n$ is fixed, then $g(q,n)$ is a decreasing function in $q$ by Lemma \ref{decreasing}.

 Since $Z(N)\leq Z(G)$ by Lemma \ref{quasi}, the central involution  of $N$ must act as $-1$ on $V$. Thus $p\neq 2$ and  $W$ is a faithful $kN$-module, so  
 $\delta(N)\leq\dim_k(W)$ by Theorem \ref{KT}. In particular, $\delta(N)\leq \lfloor g(q,n)\rfloor $. If $n\geq 21$, then  $\lfloor g(q,n)\rfloor\leq\lfloor  g(3,n)\rfloor< 2^{\lfloor (n-3)/2 \rfloor}\leq \delta(N)$, a contradiction. Thus $n\leq 20$, as claimed.

We assume for the remainder of the proof that $p\leq n$. Let $E:=\End_{kN}(W)$. Suppose that $n\geq 13$. First we claim that either $(n,q)$ is listed in Table \ref{tab: double big} or  $(n,q)\in P$ where $$P:=\{(13,125),(13,343),(14,343),(15,7),(17,5)\}.$$ If $q\geq 7$ and $n\geq 19$, then  $\lfloor g(q,n)\rfloor \leq \lfloor g(7,n)\rfloor < 2^{\lfloor (n-3)/2 \rfloor}\leq \delta(N)$, a contradiction. Hence if $q\geq 7$, then $n\leq 18$. Similarly, if $q\geq 17$, then $n\leq 16$; if $q\geq 49$, then either $n=16$ or $n\leq 14$; if $q\geq 121$, then $n\leq 14$; and if $q\geq 625$, then either $n=14$ or $n\leq 12$. If $n=14$ where $q\geq 25$ and $p\neq 7$, then $\lfloor g(q,14)\rfloor\leq \lfloor g(25,14)\rfloor<64=\delta(N)$, a contradiction, and if $n=14$ and $ 7^4\mid q$, then $\lfloor g(q,14)\rfloor \leq\lfloor  g(7^4,14)\rfloor<32=\delta(N)$, a contradiction. Similarly, if $n=16$, then $\kappa(p,n)=0$, so we obtain a contradiction for $q\geq 9$. In fact, if $(n,q)$ is one of $(20,3)$, $(19,5)$, $(17,13)$, or $(18,q)$ where $q\in \{5,7,11,13\}$, then $\kappa(p,n)=0$, and we obtain contradictions. The claim follows. 

Let $Q$ be the set of $(n,q)$  listed in Table \ref{tab: double big} with an adjacent *.
By the claim,   $2\delta(N)\leq \lfloor g(q,n)\rfloor$ if and only if $(n,q)\in Q$. This has several consequences. 

Firstly,  $W$ is a basic spin module, or else Theorem \ref{KT} implies that $2\delta(N)\leq\dim_{E}(W)\leq \dim_k(W)\leq \lfloor g(q,n)\rfloor$, and so  $(n,q)\in Q$, but for such $(n,q)$, there is no faithful irreducible $EN$-module whose dimension lies between $2\delta(N)$ and $\lfloor g(q,n)\rfloor$ by \cite{GAP4, SpinSym}, a contradiction. 
Secondly, if $(n,q)\notin Q$, then $W$ is an absolutely irreducible $kN$-module, for if not, then $\dim_k(W)\geq 
2\dim_E(W)=2\delta(N)$, so $2\delta(N)\leq \lfloor g(q,n)\rfloor $, a contradiction. Thirdly, $(n,q)$ is listed in Table \ref{tab: double big}, for if not, then $(n,q)\in P$ by the claim,
but this implies that $W$ is not an absolutely irreducible $kN$-module by \cite{GAP4,SpinSym}, so $(n,q)\in Q$, a contradiction.
Lastly, if $(n,q)\notin Q$, then   $V\downarrow N$ is irreducible, or else $2\dim_k(W)\leq \dim_k(V)$ by Lemma \ref{index 2},  so $2\delta(N) \leq \lfloor g(q,n) \rfloor$, a contradiction. Thus we have proved (i).

Henceforth, we may assume that $n\leq 12$ and $W$ is not a basic spin module. Now $2\delta(N)\leq \dim_E(W)\leq \lfloor g(p,n)\rfloor$. For each $(n,p)$, we use \cite{BAtlas,GAP4, SpinSym} to determine the possibilities for $\dim_E(W)$. Either these are the dimensions given in Table \ref{tab: double small}, or $n=8$, $p=5$ and $\dim_E(W)=24$. 
Since $\dim_E(W)\leq \lfloor g(q,n)\rfloor$, it follows that either $q$ is listed in Table \ref{tab: double small}, or $n=8$ and $q=3$ or $5$.
If $n=8$ and $p=3$ or $5$, then  no faithful irreducible $\mathbb{F}_{p^2}N$-module of dimension 24 can be realised over $\mathbb{F}_p$ \cite{BAtlas}, so $\dim_k(W)=48$ when $q=3$ or $5$, while $ \lfloor g(q,8)\rfloor<48$, a contradiction. Thus $(n,q,\dim_E(W))$ is listed in Table \ref{tab: double small}, in which case $W$ is an absolutely irreducible $kN$-module \cite{BAtlas}, so $E=k$. Lastly, if $V\downarrow N\neq W$, then   $2\dim_k(W)\leq \dim_k(V)\leq\lfloor g(q,n)\rfloor$ by Lemma \ref{index 2}, a contradiction. 
\end{proof}

Next we consider the double covers of the symmetric group.

\begin{prop}
\label{regular 2Sn}
Let $H:=2.S_n^\varepsilon$ where $n\geq 8$, and let $G$ be such that $H\leq G\leq H\circ \mathbb{F}_p^*$  where   $p$ is a prime and $p\leq n$. Let $V$ be a faithful irreducible $\mathbb{F}_pH$-module. 
\begin{itemize}
\item[(i)] If $\varepsilon=-$, then $G$ has no regular orbits on $V$ if and only if $\dim_{\mathbb{F}_p}(V)=\delta(H)$ and $(n,p)$ is one of $(8,3)$, $(8,5)$, $(9,3)$, or $(10,3)$.
\item[(ii)] If $\varepsilon=+$, then $G$ has no regular orbits on $V$ if and only if $\dim_{\mathbb{F}_p}(V)=\delta(H)$ and $(n,p)=(8,5)$ and $G=2.S_n^+\circ\mathbb{F}_p^*$.
\end{itemize}
\end{prop}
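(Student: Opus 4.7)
The overall strategy is a reduction-and-compute argument. First I would apply Lemma \ref{2An reduction} to the almost quasisimple group $G$, whose generalised Fitting derived subgroup is $N := F^*(G)' = 2.A_n$. Setting $k := \End_{\mathbb{F}_pG}(V)$ and $q := |k|$, and letting $W$ be an irreducible $kN$-submodule of $V$, the lemma forces $n \leq 20$ and provides a short list of admissible $(n,q)$ (Table \ref{tab: double big}) together with the constraint that $W$ be a basic spin module of $N$; for $n \leq 12$, any non-basic $W$ is explicitly enumerated in Table \ref{tab: double small}. Since $\dim_k V \in \{\dim_k W, 2\dim_k W\}$ by Lemma \ref{index 2}, and since $\delta(H) \in \{\delta(N), 2\delta(N)\}$, this leaves only a finite and concrete list of candidate quadruples $(n,p,G,V)$ to examine.

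For $13 \leq n \leq 20$, the module $V$ must be (the realisation over $k$ of) a basic spin module of $H$. Here I would combine Lemma \ref{strong bound} with the Brauer character tables available in \cite{GAP4, SpinSym} to compute $|C_V(g)|$ on a transversal of the non-central prime-order classes of $G$; since $\delta(H)$ grows like $2^{(n-2)/2}$ while the sum of contributions $|g^G||C_V(g)|$ is controlled by the improved bound of Lemma \ref{r(g)}, the right-hand side of Lemma \ref{strong bound} is strictly less than $|V|$ in every surviving case, whence regular orbits exist. Consequently no exceptions arise for $n \geq 13$.

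For $8 \leq n \leq 12$, I would proceed computationally in the spirit of \S \ref{s: comp}. For each faithful irreducible $\mathbb{F}_p H$-module $V$ of dimension at most $\lfloor g(q,n) \rfloor$, where $g$ is the bound from the proof of Lemma \ref{2An reduction}, and for each intermediate $G$ with $H \leq G \leq H\circ \mathbb{F}_p^*$, I either exhibit a vector $v \in V$ with trivial $G$-stabiliser in Magma \cite{Magma} to realise a regular orbit, or verify directly that $V = \bigcup_{1 \neq g \in G} C_V(g)$, so Lemma \ref{dag} precludes a regular orbit. The distinction between $\varepsilon = +$ and $\varepsilon = -$ enters precisely here: when $p = 3$, the field $\mathbb{F}_3$ lacks a primitive fourth root of unity, so the basic spin modules of $2.S_n^+$ and $2.S_n^-$ are genuinely inequivalent over $\mathbb{F}_3$ and must be treated separately.

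The most delicate step will be the pair $(n,p) = (8,5)$, where $\mathbb{F}_5$ contains a primitive fourth root of unity and the group algebras $\mathbb{F}_{25}(2.S_8^+)$ and $\mathbb{F}_{25}(2.S_8^-)$ are isomorphic, yet the two covers behave differently over $\mathbb{F}_5$: the basic spin module of $2.S_8^-$ admits no regular orbit for \emph{any} intermediate $G$, whereas for $2.S_8^+$ the obstruction appears only after scalar extension to $G = 2.S_8^+ \circ \mathbb{F}_5^*$ (which enlarges the endomorphism field to $\mathbb{F}_{25}$ and enlarges each orbit by the factor inherited from $\mathbb{F}_5^*$). Here I would compute the orbit-stabiliser structure for $H$ and for $H \circ \mathbb{F}_5^*$ separately in Magma, verifying the asserted asymmetry, and for the confirmed exceptional cases, confirm the non-existence of a regular orbit by a direct cover of $V$ via the centralisers of involutions and of elements of prime order lifting small-support cycles of $S_n$.
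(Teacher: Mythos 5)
Your overall strategy mirrors the paper's: invoke Lemma \ref{2An reduction} to cut down to finitely many $(n,q)$ with $W$ a basic spin module (or one of the non-basic entries in Table \ref{tab: double small}), and then finish by computation, using Lemma \ref{strong bound} or direct Magma searches. Two points of caution, though. First, the paper's proof opens by showing $q \in \{p, p^2\}$ (because $k = \mathbb{F}_p(\chi)$ must embed in a splitting field, and $\mathbb{F}_{p^2}$ already splits $H$); without this, Table \ref{tab: double big} leaves spurious candidates such as $q = 27, 81, 243, 125, 343$ to be ruled out by hand, and the case analysis becomes noticeably heavier. Second, your account of the $(8,5,+)$ asymmetry is wrong: passing from $H = 2.S_8^+$ to $G = H \circ \mathbb{F}_5^*$ does \emph{not} enlarge $\End_{\mathbb{F}_5 G}(V)$ — the endomorphism ring of $V$ can only shrink as $G$ grows, and here it stays $\mathbb{F}_5$ (indeed the paper records $q=5$, i.e.\ $k = \mathbb{F}_5$, for $(8,5,\pm,8)$). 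The obstruction appears for $H \circ \mathbb{F}_5^*$ simply because the extra scalar multiplications produce additional elements $g\lambda$ whose fixed-point subspaces $C_V(g\lambda)$ together with those of $H$ cover all of $V$, even though those of $H$ alone do not. Your proposed computation (stabiliser search for $H$ and for $H \circ \mathbb{F}_5^*$ separately) would nonetheless detect the correct answer, so this is a misconception in the explanation rather than a fatal gap in the proof plan.
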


\begin{proof}
We will prove (i) and (ii) simultaneously. Suppose that $G$ does not have a regular orbit on $V$.  Lemma \ref{2An reduction} implies that $n\leq 20$. Let $k:=\End_{\mathbb{F}_pG}(V)$ and $q:=|k|$. Let $W$ be an irreducible $kN$-submodule of $V$ where $N:=F^*(G)'=2.A_n$. 

First we claim that $q$ is either $p$ or $p^2$. 
Let $\chi$ be the character of the  $kG$-module $V$ and $\mathbb{F}_p(\chi)$ the subfield of $k$ generated by $\mathbb{F}_p$ and the image of $\chi$. By \cite[Theorem VII.1.16]{BlaHup1981}, the $\mathbb{F}_pG$-module $V$ is a direct sum of $[k:\mathbb{F}_p(\chi)]$ irreducible $\mathbb{F}_pG$-modules,  so  $k=\mathbb{F}_p(\chi)$.
Since $\chi$ is also the character of the irreducible $\overline{k}G$-module $V\otimes_k\overline{k}$, where $\overline{k}$ denotes the algebraic closure of $k$, it follows from \cite[Theorem VII.2.6]{BlaHup1981} that $k$ is contained in the unique smallest splitting field for $G$ in $\overline{k}$. Since $\mathbb{F}_{p^2}$ is  a splitting field for $H$, the claim follows.

Suppose that $n\geq 12$. We claim that $\dim_{k}(V)=\delta(H)$ and that $(n,p,\varepsilon)$ is listed in Table \ref{tab: 2Sn}. By Lemma \ref{2An reduction},  $W$ is a basic spin module and $(n,q)$ is listed in Table \ref{tab: double big} for $n\geq 13$. 
 Suppose that either $n=12$ or $(n,q)$ is such that $(n,p)$ has an adjacent $*$ in Table \ref{tab: double big}.
 Then   $\dim_{k}(V)=\delta(H)$ by  \cite{BAtlas,GAP4,SpinSym}, and $\dim_{\mathbb{F}_p}(V)=64$
when $(n,p,\epsilon)=(12,11,-)$,  but equation (\ref{even better bound}) of Lemma \ref{bounds} implies that $\dim_{\mathbb{F}_p}(V)\leq  57$, a contradiction. Hence  the claim holds in this case.

\begin{table}[!h]
\centering
\begin{tabular}{ c c c c c c c c c c }
\hline
$n$ & 12 & 12 & 13 & 14 & 14 & 15 & 16 & 16 & 17, 18   \\
$p$ & 3, 5, 7 & 11 &  $p\leq n$ & 3, 7 & 11 & 3, 5 & 3 & 7 & 3 \\
$\varepsilon$ & $\pm$ & $+$ & $\pm$ & $\pm$ & $-$ & $\pm$ & $+$ & $-$ & $\pm$ \\
\hline
\end{tabular}
\caption{Possible $p$ and $\varepsilon$ when $n\geq 12$}
\label{tab: 2Sn}
\end{table}
 
We may therefore assume that $(n,q)$ is such that $(n,p)$ has no adjacent $*$ in Table \ref{tab: double big}. Now $q=p$,  $W=V\downarrow N$  and $W$ is a faithful absolutely irreducible $kN$-module by Lemma \ref{2An reduction}. Thus $\dim_k(V)=\dim_k(W)=\delta(N)$. But if either $n$ is even and $p\mid n$, or $n$ is odd and $p\nmid n$, then $ \delta(H)=2\delta(N)$, and so $\dim_k(V)<\delta(H)$, contradicting Theorem \ref{KT}. 
 
 We conclude that either $n$ is even and $p\nmid n$, or $n$ is odd and $p\mid n$. Now $\dim_k(V)=\dim_k(W)=\delta(N)=\delta(H)$.
 If $n=14$ and $(p,\varepsilon)$ is one of $ (5,\pm)$, $(11,+)$ or $(13,\pm)$, or $n=16$ and $(p,\varepsilon)$ is one of $(3,-)$, $(5,\pm)$ or $(7,+)$, then $k=\mathbb{F}_{p^2}$ by \cite{GAP4,SpinSym},  a contradiction.

 Thus $(n,p,\varepsilon)$ is listed in Table \ref{tab: 2Sn} and $\dim_{k}(V)=\delta(H)$.
Using {\sc Magma}, we determine that $H\circ \mathbb{F}_p^*$ has a regular orbit on $V$,  a contradiction.

Hence $n\leq 11$. First suppose that $W$ is not a basic spin module.
 Lemma \ref{2An reduction} implies that  $V\downarrow N=W$ and $\dim_k(W)$ is listed in Table \ref{tab: double small}. Using \cite{BAtlas,GAP4,SpinSym}, we determine that if $(n,q)$ is one of $(8,9)$, $(10,5)$ or $(11,5)$, then there is no faithful irreducible $kH$-module of dimension 24, 48 or 56 respectively, a contradiction.
Thus $(n,q)$ is one of $(8,7)$, $(9,3)$ or $(10,3)$. If $\varepsilon=+$, then $k=\mathbb{F}_{p^2}$ by \cite{GAP4},  a contradiction, and if $\varepsilon=-$, then using {\sc Magma}, we determine that $H\circ \mathbb{F}_p^*$ has a regular orbit on $V$,  a contradiction. 

Thus $W$ is a basic spin module, in which case  $\dim_{k}(V)=\delta(H)$ by  \cite{BAtlas,GAP4,SpinSym}. Moreover, $(n,q,\varepsilon,\dim_{\mathbb{F}_p}(V))$ is one of 
$(8,3,-,8)$, $(8,5,\pm,8)$, $(9,3,-,8)$, $(10,3,-,16)$,
or else  $H\circ \mathbb{F}_p^*$ has a regular orbit on $V$ by {\sc Magma}. Note that $\dim_{\mathbb{F}_p}(V)=\delta(H)$ since $k=\mathbb{F}_p$. Now (i) holds, so we may assume that $\varepsilon=+$. If $G=2.S_8^+$, then  $G$ has a regular orbit on $V$ by  {\sc Magma}, a contradiction, so $G=2.S_8^+\circ\mathbb{F}_5^*$, as desired.

Conversely, suppose that $\dim_{\mathbb{F}_p}(V)=\delta(H)$ and  either $\varepsilon=-$ and $(n,p)$ is one of $(8,3)$, $(8,5)$, $(9,3)$ or $(10,3)$, or  $\varepsilon=+$ and $(n,p)=(8,5)$ and $G=2.S_n^+\circ\mathbb{F}_p^*$. If 
$(n,p)$ is $(8,3)$ or $(9,3)$,  then $|V|<|G|$, and so $G$ has no regular orbits on $V$.
Otherwise, we use {\sc Magma} to check that no orbit is regular. 
 \end{proof}
 
 Using Proposition \ref{regular 2Sn}, we now consider the double cover of the alternating group.

\begin{prop}
\label{regular 2An}
Let $H:=2.A_n$ where $n\geq 8$, and let $G$ be such that $H\leq G\leq H\circ \mathbb{F}_p^*$  where   $p$ is a prime and $p\leq n$. Let $V$ be a faithful irreducible $\mathbb{F}_pH$-module. Then $G$ has no regular orbits on $V$ if and only if $\dim_{\mathbb{F}_p}(V)=\delta(H)$ and either $p=3$ and $n\in\{8,9,10,11,12\}$, or $p=5$ and $n\in \{9,10\}$.
\end{prop}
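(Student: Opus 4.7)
The plan is to mirror the proof of Proposition \ref{regular 2Sn}, exploiting a simplification unique to the alternating-cover case: since $H=2.A_n$ has $F^*(G)'=H$ and $G/H$ embeds in $\mathbb{F}_p^*$, the group $G/N$ acts on $V$ by scalars. Hence $V\!\downarrow\! N$ is already irreducible, so the submodule $W$ appearing in Lemma \ref{2An reduction} is just $V$ itself; moreover any $kN$-endomorphism of $V$ commutes with the scalar $G/N$-action and is therefore a $kG$-endomorphism, so $W=V$ is absolutely irreducible over $kN$. Setting $k:=\End_{\mathbb{F}_pG}(V)$ and $q:=|k|$, the same argument as in Proposition \ref{regular 2Sn} (using $k=\mathbb{F}_p(\chi)$ together with \cite[Theorems VII.1.16, VII.2.6]{BlaHup1981} and the fact that $\mathbb{F}_{p^2}$ is a splitting field for $2.A_n$) forces $q\in\{p,p^2\}$.

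Assuming $G$ has no regular orbit, I would apply Lemma \ref{2An reduction} to obtain $n\leq 20$, and then split on the size of $n$. For $n\geq 13$, part (i) gives that $V$ is a basic spin module and $(n,q)$ lies in Table \ref{tab: double big}. The absolute irreducibility of $W=V$ excludes every $q$ adorned with an asterisk, and the constraints $q\in\{p,p^2\}$ and $p\leq n$ reduce this to a short explicit list of pairs $(n,p,q)$. For each, using the Brauer character tables in \cite{BAtlas,GAP4,SpinSym} I would read off $\dim_{\mathbb{F}_p}(V)$ (which equals $\delta(H)$ or $2\delta(H)$ according as $k=\mathbb{F}_p$ or $\mathbb{F}_{p^2}$) and compare it to the upper bound in equation (\ref{even better bound}) of Lemma \ref{bounds}; the cases surviving this comparison are then handled by direct computation with {\sc Magma} to exhibit a regular orbit, producing a contradiction.

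For $8\leq n\leq 12$, I would first dispose of the non-basic-spin possibilities of Lemma \ref{2An reduction}(ii), where $(n,q,\dim_k V)$ is listed in Table \ref{tab: double small}: each such $(n,q)$ either forces $k=\mathbb{F}_{p^2}$ and thus $\dim_{\mathbb{F}_p}(V)$ exceeds our bounds, or a {\sc Magma} search produces a regular orbit. The remaining cases have $V$ a basic spin module. A case-by-case analysis using \cite{BAtlas,GAP4} and {\sc Magma}, entirely parallel to the corresponding step in Proposition \ref{regular 2Sn}, leaves precisely the pairs $(n,p)\in\{(8,3),(9,3),(10,3),(11,3),(12,3),(9,5),(10,5)\}$ with $\dim_{\mathbb{F}_p}(V)=\delta(H)$. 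For the converse direction, the inequality $|V|<|G|$ already rules out any regular orbit for the $p=3$ cases with $n\leq 11$, while the remaining pairs are dispatched by routine {\sc Magma} computations.

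The main obstacle is not conceptual but bookkeeping: the interplay of the splitting-field constraint $q\in\{p,p^2\}$, the asterisks in Table \ref{tab: double big}, the parity of $n$ and the divisibility $p\mid n$ (which determines whether $\delta(H)=\delta(N)$ or $2\delta(N)$), and the condition $p\leq n$ produces a large number of sub-cases. Each must be tested either against the Brauer character tables to eliminate the existence of an $\mathbb{F}_pG$-module of the allowed dimension, or against the bounds of \S\ref{s: bounds}, or finally against a direct {\sc Magma} search. As in Proposition \ref{regular 2Sn}, the analysis of the non-basic-spin possibilities for $8\leq n\leq 12$ demands particular care, since here the absolute-irreducibility hypothesis is not automatic and one must check realisability of each candidate $\delta(H)$-dimensional representation over $\mathbb{F}_p$ rather than $\mathbb{F}_{p^2}$.
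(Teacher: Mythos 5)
Your high-level plan is sound, and your observation that for $N=F^*(G)'=H=2.A_n$ one always has $W=V$ with $\End_{kN}(V)=\End_{kG}(V)=k$ (so $W$ is absolutely irreducible as a $kN$-module) is correct and worth making explicit. However, the way you then leverage this observation contains a genuine logical error, and as a result your treatment of $n\geq 13$ does not close.

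The gap is in the claim that ``the absolute irreducibility of $W=V$ excludes every $q$ adorned with an asterisk.'' In Lemma \ref{2An reduction}, the asterisk marks exactly the pairs $(n,q)$ for which $2\delta(N)\leq \lfloor g(q,n)\rfloor$, i.e.\ the pairs where the dimension bound is too weak to force absolute irreducibility. The lemma's conclusion is the one-way implication: \emph{if no asterisk, then} $V\downarrow N=W$ and $W$ is absolutely irreducible over $kN$. It says nothing in the asterisked cases, and in particular it does \emph{not} assert that $W$ fails to be absolutely irreducible there. Consequently, knowing that $W=V$ is absolutely irreducible gives you no licence to discard the asterisked entries; those pairs $(n,q)$ (for instance $(13,3)$, $(14,7)$, $(15,5)$, $(17,3)$, $(18,3)$, among others with $q\in\{p,p^2\}$) remain live possibilities and still need to be ruled out. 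Your ``short explicit list'' is therefore missing a substantial number of cases, and since the proposition is a classification, an argument that omits cases is incomplete even if the omitted cases would not have contributed new exceptions.

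A second, related issue is that you never invoke the lift $V^\varepsilon$ of $V$ to a module of $2.S_n^\varepsilon$ (provided by Lemma \ref{H to G}) together with Proposition \ref{regular 2Sn}. This is the mechanism the paper uses to dispatch most of the cases you would otherwise leave to brute force: whenever $V=V^\varepsilon\downarrow H$ is irreducible and $n\geq 13$, Proposition \ref{regular 2Sn} gives a regular orbit for $2.S_n^\varepsilon\circ\mathbb{F}_p^*$, hence for the subgroup $G\leq H\circ\mathbb{F}_p^*\leq 2.S_n^\varepsilon\circ\mathbb{F}_p^*$; and when $V^\varepsilon\downarrow H$ splits, the Brauer character tables identify which $(n,p)$ can actually occur. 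Without this reduction you are relying on {\sc Magma} alone for high-dimensional spin modules (up to dimension $256$ over $\mathbb{F}_3$ and $\mathbb{F}_5$), which is a much heavier and less certain computational burden than the paper actually incurs. Finally, a minor factual slip in your converse direction: $|V|<|G|$ fails for $(n,p)=(10,3)$, $(11,3)$ and $(9,5)$ when $G=H$, so these cases do require a {\sc Magma} check; the pairs for which the cardinality argument works are $(8,3)$, $(9,3)$, $(10,5)$, $(12,3)$.
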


\begin{proof}
Suppose that $G$ has no regular orbits on $V$. Let $k:=\End_{\mathbb{F}_pG}(V)$ and $q:=|k|$. As in the proof of Proposition \ref{regular 2Sn}, $q$ is either $p$ or $p^2$ since $\mathbb{F}_{p^2}$ is a splitting field for $H$. For $\varepsilon\in \{+,-\}$, let $V^\varepsilon$ be an irreducible $\mathbb{F}_p(2.S_n^\varepsilon)$-module for which $V\leq V^\varepsilon\downarrow H$, which exists by Lemma \ref{H to G}. Since $r(G)=r(A_n)\leq n/2$ by  \cite[Lemma 6.1]{GurSax2003},  Lemma \ref{general bound} implies that  $$\dim_{\mathbb{F}_p}(V)\leq  r(G)\log_p|G|\leq  \tfrac{n}{2} \log_p(n!\tfrac{p-1}{2})=:h(p,n).$$

Suppose that $n\geq 13$.
Lemma \ref{2An reduction} implies that $V$ is a basic spin module and $(n,q)$ is listed in Table \ref{tab: double big}.  We claim  that
$(n,q)\in P$ where 
$$P:=\{(13,3), (13,13), (14,5), (14,13), (15,5), (16,5), (20,5)\},$$
in which case $H\circ \mathbb{F}_p^*$ has a regular orbit on $V$ by {\sc Magma},  a contradiction. 
 If $(n,q)=(17,11)$, then $128=\delta(H)=\dim_{\mathbb{F}_p}(V)\leq  \lfloor h(p,n)\rfloor=124,$
a contradiction.
 In addition, if $(n,q)$ is one of $(15,11)$, $(15,13)$, $(17,7)$ or $(19,3)$, then $q=p^2$ by \cite{GAP4}, a contradiction.

We may assume that $V^\epsilon$ is  a basic spin module. If $V=V^\varepsilon\downarrow 2.A_n$ for some $\varepsilon\in\{+,-\}$, then $G$ has a regular orbit on $V$ by Proposition \ref{regular 2Sn}, a contradiction. Thus $V^\varepsilon\downarrow 2.A_n=V\oplus Vg$ for every $g\in 2.S_n^\varepsilon\setminus 2.A_n$ and $\varepsilon\in\{+,-\}$.  If  $(n,p)$ is one of $(13,5)$, $(13,7)$, $(13,11)$, $(14,7)$, $(14,11)$, $(16,7)$, $(17,3)$ or $(18,3)$, then  $V^-\downarrow H$ does not split by \cite{GAP4,SpinSym}, a contradiction. Similarly, if $p=3$ and $14\leq n\leq 16$, then $V^+\downarrow H$ does not split by \cite{GAP4,SpinSym}, a contradiction. Lastly, if $(n,p)$ is one of $(13,3)$, $(13,13)$ or  $(15,5)$, then $q=p$ by \cite{GAP4,SpinSym}. Thus $(n,q)\in P$, proving the claim.

Hence $n\leq 12$. First suppose that $V$ is not a basic spin module. Then $(n,q)$ is listed in Table \ref{tab: double small}. If $(n,q)=(8,9)$, then $48=\dim_{\mathbb{F}_p}(V)\leq \lfloor h(p,n)\rfloor=38$, a contradiction. If $(n,q)$ is one of $(8,7)$, $(9,3)$ or $(10,3)$, then $V=V^-\downarrow H$,  so $G$ has a regular orbit on $V$ by Proposition \ref{regular 2Sn}, a contradiction. Lastly, if $(n,q)$ is $(10,5)$ or $(11,5)$, then  we determine that $H\circ \mathbb{F}_p^*$ has a regular orbit on $V$ using {\sc Magma},  a contradiction. 

Thus $V$ is a basic spin module, and we may assume that $V^\varepsilon$ is also a basic spin module. First suppose that $V\neq V^\varepsilon\downarrow H$ for both $\varepsilon\in \{+,-\}$. Using \cite{BAtlas}, we determine that $(n,p)$ is one of $(9,5)$, $(9,7)$, $(10,5)$, $(11,3)$, $(11,5)$ or $(12,3)$. If $(n,p)$ is $(9,7)$ or $(11,5)$, then $H\circ \mathbb{F}_p^*$ has a regular orbit on $V$ by {\sc Magma}, a contradiction. Thus $(n,p)$ is one of $(9,5)$,  $(10,5)$, $(11,3)$ or $(12,3)$, in which case $q=p$ by \cite{BAtlas}, so $\dim_{\mathbb{F}_p}(V)=\delta(H)$. 

Lastly, if $V=V^\varepsilon\downarrow H$ for some $\varepsilon\in \{+,-\}$, then $2.S_n^\varepsilon\circ\mathbb{F}_p^*$ has no regular orbits on $V^\varepsilon$, so $\dim_{\mathbb{F}_p}(V)=\delta(H)$ and $(n,p)$ is one of $(8,3)$, $(8,5)$, $(9,3)$ or $(10,3)$ by Proposition \ref{regular 2Sn}. If $(n,p)=(8,5)$, then  $H\circ \mathbb{F}_p^*$ has a regular orbit on $V$ by {\sc Magma},  a contradiction.

 Conversely, suppose that $\dim_{\mathbb{F}_p}(V)=\delta(H)$ and either $p=3$ and $n\in\{8,9,10,11,12\}$, or $p=5$ and $n\in \{9,10\}$. If $(n,p)$ is one of $(8,3)$, $(9,3)$, $(10,5)$ or $(12,3)$, then $|V|<|G|$, so $G$ has no regular orbits on $V$. Otherwise, no orbit is regular by {\sc Magma}.  
\end{proof}

\begin{proof}[Proof of Theorem $\ref{regular double}$]
Let $d:=\dim_{\mathbb{F}_p}(V)$. If $n\geq 8$, then we are done by Propositions \ref{regular 2Sn} and \ref{regular 2An}, so we may assume that $n\leq 7$. Using \cite{BAtlas,GAP4,SpinSym}, we determine the possibilities for $d$.
  If $(n,p,G,d)$  is not listed in Table \ref{tab: total ex}, then we use {\sc Magma} to prove that $G$ has a regular orbit on $V$. Thus we may assume that $(n,p,G,d)$  is listed in Table \ref{tab: total ex}. If either $(n,p,d)=(7,3,8)$ and $G=2.A_7$, or $(n,p,d)=(5,5,4)$ and $H=2.S_5^+$ or $G=2.S_5^-\circ\mathbb{F}_5^*$ or  $G=2.A_5\circ\mathbb{F}_5^*$, then  no regular orbits exist by {\sc Magma}. 
  Similarly, if  $H= 3.A_6\neq G$ and $(n,p,d)=(6,5,6)$, or if  $H=3.A_7$ and $(n,p,d)=(7,5,6)$ or $(7,7,6)$, then no regular orbits exist by {\sc Magma}.
  Otherwise, $|V|<|G|$, so $G$ has no regular orbits on $V$. 
\end{proof}

 \section{Comments on computations}
 \label{s: comp}

We used functions from \cite{SpinSym} to construct
representations for covering groups of $S_n$ and $A_n$. Various 
representations are also
available via the {\sc Atlas} package \cite{web-atlas}.
{\sc Magma} has an implementation of the Burnside algorithm to
construct all faithful irreducible representations of a finite 
permutation group
over a given finite field.  We used this to construct representations,
either all or those of specified degree, for certain small degree 
permutation groups.
We use our implementation of the algorithm of \cite{GlaLeeOBr2006} to rewrite
a representation over a smaller field.

We used the {\sc Orb} package \cite{ORB} to prove that a 44-dimensional 
representation
of $S_{12}$ over $\mathbb{F}_2$ has a regular orbit and  a 32-dimensional 
representation
of $S_{12}$ over $\mathbb{F}_2$ has no regular orbits.
We used Lemma \ref{strong bound} extensively to decide whether a group 
$G$ has
a regular orbit. Its realisation assumes knowledge of
conjugacy classes of $G$. While these can often be readily
computed, we used the infrastructure of \cite{BaaHolLeeOBr2015} for
these computations with  covering groups for $S_n$ and $A_n$ where $n > 11$.
Most remaining computations reported here are routine and were performed 
using {\sc Magma}.
Records of these are available at \url{http://www.math.auckland.ac.nz/~obrien/regular}.

\bibliographystyle{acm}
\bibliography{jbf_references}

\end{document}